\newtheorem{lemma}{\bf{Lemma} }[section]
\newtheorem{proposition}{\bf{Proposition}}[section]
\newtheorem{theorem}{\bf{Theorem}}[section]
\newtheorem{remark}{\sc{Remark} }[section]
\newtheorem{definition}{\sc{Definition} }[section]
\newtheorem{corollary}{\bf{Corollary} }[section]
\begin{document}

\title{Radiative effects on the thermoelectric problems}
\author{Luisa Consiglieri}
\address{Luisa Consiglieri, Independent Researcher Professor,  European Union}
\urladdr{\href{http://sites.google.com/site/luisaconsiglieri}{http://sites.google.com/site/luisaconsiglieri}}

\begin{abstract}
There are two main directions in this paper.
One is  to find  sufficient conditions to ensure the existence
of weak solutions to thermoelectric problems.
 At the steady-state, these problems consist
 of a coupled system of elliptic equations of the divergence form,
 commonly accomplished with nonlinear radiation-type conditions
on at least a nonempty part of the  boundary of a $C^1$ domain.
The model under study takes
the   thermoelectric Peltier and Seebeck effects into account,
which describe the Joule-Thomson effect. 
 The proof method makes recourse of a fixed point argument.
To this end, well-determined estimates are our main concern.
  The paper  is in the second direction for
the derivation of explicit $W^{1,p}$-estimates 
$(p>2)$ for  solutions of nonlinear radiation-type problems, 
where the leading coefficient
 is assumed to be a discontinuous function on the space
variable.
In particular, the behavior of the leading coefficient is 
conveniently explicit on the estimate of any solution.
\end{abstract}

\keywords{thermoelectricity; heat radiation; higher regularity}
\subjclass[2010]{80A17, 78A30, 35R05, 35J65, 35J20, 35B65}

\maketitle

\section{Introduction}

This paper is concerned with a model on thermoelectric devices with
radiative effects. 
We formulate the problem by coupling a thermal model with a electrical
model. The work is two-fold. Firstly,
the first part (and Appendix) is of physical nature. Secondly,
Sections \ref{state}, \ref{reverse}, \ref{schigh}, and
\ref{sce} are of mathematical nature and are devoted to obtaining the existence
result for the proposed model (Section \ref{secm2}).
We prove the higher integrability of the gradient of weak solutions to the 
boundary value problem under study.
 This regularity result is sufficiently general to contribute to other problems, in which the dependence on the values of the involved constants is essential, instead of the problem under study only.
 Although the techniques used in the paper are standard (see 
\cite{ak,ark99,bf,cos,drey,gs,gia83,gong,nw}
and the references therein), 
the explicit expressions of the involved constants as function of the data are new.
The derivation of $W^{1,p}$-estimates in \cite{cp,shen} makes use of the
contradiction argument which invalidates the determination of 
explicit expressions of the involved constants as function on the data.
Our final result (Theorem \ref{main2}) is to derive sufficient conditions
on the data to ensure the existence of at least one weak solution
to the thermoelectric problem under study in two-dimensional
real space. Similar work on the 3D existence remains an open problem.
We mention to \cite{kaiser} the optimal elliptic regularity
for the spatio-material model constellations in three-dimensional
real space.

Let $\Omega$ be a bounded domain  (that is, connected open set)
in $\mathbb{R}^n$ ($n\geq 2$),
 according to Definition \ref{domega},
representing a thermoelectric conductor material, which is a
heterogeneous anisotropic solid.
 Assume  that $\Omega$ is of class $ C^{1}$ (cf. Definition \ref{cka}).

Let us consider the following problem that extends the thermoelectric problems,
which were introduced in \cite{zamm,aduf}, in the sense of that
 the thermoelectric coefficient
 is assumed to be a given but arbitrary nonlinear function.
The electrical current density $\bf j$ and  the energy flux density
${\bf J}={\bf q}+\phi{\bf j}$, with 
$\bf q$ being the heat flux vector, 
 satisfy
\begin{equation}\label{jj}
\left\{\begin{array}{ll}\nabla\cdot{\bf j}=0 &\mbox{ in }\Omega\\
-{\bf j}\cdot{\bf n}=g &\mbox{ on }\Gamma_{\rm N}\\
{\bf j}\cdot{\bf n}=0 &\mbox{ on }\Gamma
\end{array}\right.\qquad
\left\{\begin{array}{ll}
\nabla\cdot{\bf J}=0 &\mbox{ in }\Omega\\
{\bf J}\cdot{\bf n}=0 &\mbox{ on }\Gamma_{\rm N}\\
-{\bf J}\cdot{\bf n}=f_\lambda(\theta)
|\theta|^{\ell-2}\theta-\gamma(\theta) \theta_\mathrm{e}^{\ell-1}
 &\mbox{ on }\Gamma,
\end{array}\right.
\end{equation}
for $\ell\geq 2$.
Here $\bf n$ is the unit outward normal to the boundary $\partial\Omega$,
 $g$ denotes the surface current source,
 $f_\lambda$ is a temperature dependent function that
expresses the radiation law 
depending on the wavelength $\lambda$,
and   $\gamma  \theta_\mathrm{e}^{\ell-1}$ stands for the  external heat sources,
with $ \theta_\mathrm{e}$ being an external temperature.
The Kirchhoff radiation law, its variants, and extensions are analyzed
in \cite{lap,land,lorentz,rybi} for real physical bodies.
According to the Stefan--Boltzmann radiation  law, $\ell=5$,
$f_\lambda(T)=\sigma_{\rm SB}\epsilon (T)$ and 
$\gamma(T)=\sigma_{\rm SB} \alpha(T)$, where
$\sigma_{\rm SB}= 5.67\times 10^{-8}$W$\cdot $m$^{-2} \cdot$K$^{-4}$
 is the Stefan-Boltzmann constant for blackbodies.
The parameters, the emissivity $\epsilon$ and the absorptivity $\alpha$, both
 depend on the spatial variable
and the temperature function $\theta$.
If $\ell=2$, the boundary condition corresponds to the Newton law
of cooling with heat transfer coefficient $f_\lambda=\gamma$.

The constitutive equations of state,
\begin{align}\label{pheno1}
{\bf q}&= -k\nabla\theta-\Pi\sigma\nabla\phi;\\
{\bf j}&= -\alpha_{\rm s}\sigma\nabla\theta-\sigma\nabla\phi,\label{pheno2}
\end{align} 
are based on the principle of local thermodynamic
equilibrium of physically small subsystems
(see \cite{goupil} and the references therein).
Here $\theta$ denotes the absolute temperature, 
$\phi$ is the electric potential, 
$\alpha_{\rm s}$ represents
the Seebeck coefficient,
and the Peltier coefficient
$\Pi(\theta)=\theta\alpha_{\rm s}(\theta)$ is
due to  the first Kelvin relation \cite{sarro}.
The electrical conductivity $\sigma$, and
the thermal conductivity $k=k_{\rm T}+\Pi\alpha_{\rm s}\sigma$, with $k_T$ denotes the
purely conductive contribution,
are, respectively, the known positive coefficients of Ohm and Fourier laws.
Both coefficients  depend on the spatial variable
and the temperature function $\theta$ \cite{rou11},
 which invalidates, for instance, the use of the Kirchhoff transformation.

For $p>1$, if meas$(\Gamma)=0$ let the reflexive Banach space
\[V_{p}:=\{ v\in
W^{1,p}(\Omega):\ \int_\Omega v\mathrm{dx}=0\}
\]
 endowed with the seminorm of $W^{1,p}(\Omega)$.
 
For $p>1$, and $\ell\geq 1$, if meas$(\Gamma)>0$ 
let the reflexive Banach space \cite{dpz}
\[V_{p,\ell}:=\{ v\in
W^{1,p}(\Omega):\ v\in L^{\ell}(\Gamma)\}
\]
 endowed with the norm 
 \[
\| v\|_{1,p,\ell}:=\|\nabla  v\|_{p,\Omega}+\|v\|_{\ell,\Gamma}.
\]
For the sake of simplicity, we denote by the same designation $v$
the trace of a function  $v\in W^{1,1}(\Omega)$.
Observe that $V_{p,\ell}$ is a Hilbert space  equipped with the inner product
only if $p=\ell=2$.
By trace theorem,
$V_{p,\ell}=W^{1,p}(\Omega)$ if $ 1\leq\ell< p(n-1)/(n-p)$. Otherwise,
$V_{p,\ell}\subset_{\not=}W^{1,p}(\Omega)$.

We formulate the problem under study as follows:

\noindent ($\mathcal P$)
 Find the temperature-potential pair
$(\theta,\phi)$ such that  if it verifies the variational problem:
\begin{align*}
\int_\Omega(k(\cdot,\theta)\nabla\theta)\cdot \nabla v\mathrm{dx}
+\int_{\Gamma}f_\lambda (\cdot,\theta)|\theta|^{\ell-2}
\theta v\mathrm{ds}=
\nonumber\\ =\int_\Omega \sigma(\cdot,\theta)\Big(
\alpha_{\rm s}(\cdot,\theta)(\theta+\phi)\nabla\theta+\phi
\nabla\phi\Big) \cdot\nabla v\mathrm{dx}
+\int_{\Gamma}\gamma(\cdot,\theta) \theta_\mathrm{e}^{\ell-1} 
v\mathrm{ds}; 
\\
\int_\Omega(\sigma(\cdot,\theta)\nabla\phi)\cdot \nabla w\mathrm{dx}=
-\int_\Omega
\left({ \sigma(\cdot,\theta)}\alpha_{\rm s}(\cdot,\theta)\nabla\theta\right)
\cdot \nabla w\mathrm{dx}+
\int_{\Gamma_{\rm N}}g w\mathrm{ds},
\end{align*}
for every $v\in V_{p',\ell}$ and $w\in V_{p'}$, where 
 $p'$ accounts for the conjugate exponent of $p$: $p'=p/(p-1)$.

We emphasize that this solution verifies, in the distributional sense,
the PDE written in terms of the Joule and Thomson effects:
\[
0=\nabla\cdot{\bf q}+\nabla\phi\cdot{\bf j}
=-\nabla\cdot(k_{\rm T}\nabla\theta)-{|{\bf j}|^2\over \sigma}
+\mu\nabla\theta\cdot{\bf j},
\]
where the
 Thomson coefficient $\mu$ is the  thermoelectric coefficient
directly measurable for individual materials that satisfies the second Kelvin 
relation: $\mu(T)=T{\partial\alpha_{\rm s}\over \partial T}(T).$
The verification of the second law of thermodynamics is stated in Appendix.

We assume that

(H1) The  Seebeck coefficient $\alpha_{\rm s}
:\Omega\times\mathbb{R}\rightarrow\mathbb{R}$ is a
  Carath\'eodory function, 
{\em i.e.} measurable with respect to $x\in\Omega$ and
  continuous with respect to $T\in\mathbb R$, such that
   \begin{equation}
\exists\alpha^\#>0:\quad
|\alpha_{\rm s}(x,T)|\leq \alpha^\#,\quad\mbox{a.e. } x\in \Omega,
\quad\forall T\in \mathbb R.\label{amm}
  \end{equation}
  
(H2) The thermal and electrical conductivities 
 $k,\sigma:\Omega\times\mathbb{R}\rightarrow\mathbb{M}_{n\times n}$ are
  Carath\'eodory tensors, where $\mathbb{M}_{n\times n}$ denotes the set
  of $n\times n$ matrices. Furthermore, they verify
 \begin{align*}
\exists k_\#>0:\quad k_{ij}(x,T)\xi_i\xi_j\geq k_\#|\xi|^2;&\\
\exists\sigma_\#>0:\quad \sigma_{ij}(x,T)\xi_i\xi_j\geq 
\sigma_\#|\xi|^2,&\quad\mbox{a.e. } x\in \Omega,\quad\forall T\in \mathbb{R},
\ \xi\in\mathbb{R}^n,
  \end{align*}
under the summation convention over repeated indices:
$\mathsf{A}{\bf a}\cdot{\bf b}=A_{ij}a_jb_i={\bf b}^\top  \mathsf{A}{\bf a}$;
and
 \begin{align}
\exists k^\#>0:\quad | k_{ij}(x,T)|\leq k^\#;&\nonumber\\
\exists\sigma^\#>0:\quad |\sigma_{ij}(x,T)|\leq 
\sigma^\#,&\quad\mbox{a.e. } x\in \Omega,\quad\forall T\in \mathbb R,
\label{smm}
  \end{align}
  for all $i,j\in\{1,\cdots,n\}$.
  
  (H3)  The boundary operators $f_\lambda$ and $\gamma$ are
 Carath\'eodory  functions from $\Gamma\times\mathbb{R}$
 into $\mathbb{R}$ such that
 \begin{align}
\exists b_\#, b^\#>0:\quad b_\#\leq f_\lambda(x,T)\leq 
b^\#;&\label{fmm}\\
\exists \gamma^\#>0:\quad |\gamma (x,T)|\leq 
\gamma^\#,&\quad\mbox{a.e. } x\in \Gamma,\quad\forall T\in \mathbb R.
\label{gmm}
  \end{align}

(H4)   $\theta_\mathrm{e}\in L^{(\ell-1)(2+\delta)}(\Gamma)$, and
$g\in L^{2+\delta}(\Gamma_{\rm N})$ such that $\int_{\Gamma_{\rm N}}g
\mathrm{ds}=0$, for some $\delta>0$. For the sake of simplicity, we assume
 $\delta=1$.

Observe that for any $1\leq p\leq 3$,
$ L^{3}(\Gamma_{\rm N})\hookrightarrow L^{p(n-1)/n}(\Gamma_{\rm N})$
 (which is the dual space of  $ L^{p'(n-1)/(n-p')}(\Gamma_{\rm N})$),
then $gw\in L^1(\Gamma_{\rm N})$ for all  $w\in W^{1,p'}(\Omega)$.

Finally,  we are able to state
the  existence result  in the two-dimensional space.
Let us denote by $C_\infty
=n^{-1/p}\omega_n^{-1/n}[(p-1)/(p-n)]^{1/p'}|\Omega |^{1/n-1/p}$ 
the continuity constant of  the Morrey-Sobolev embedding
  $W^{1,p}(\Omega)\hookrightarrow L^\infty(\Omega)$ for $p>n$ \cite{tal94},
  where 
$\omega_n$ is the volume of the  unit ball $B_1(0)$ of
$\mathbb{R}^n$, that is, $\omega_n=\pi^{n/2}/\Gamma(n/2+1)$.
\begin{theorem}\label{main2}
Suppose that the assumptions (H1)-(H4) be fulfilled, under $n=2$.
Then,  there exists at least one  
solution $(\theta,\phi)\in V_{p,\ell}\times V_{p}$ of (${\mathcal P}$), 
for $2<p<2+1/(\upsilon-1)$, where 
\begin{equation}\label{defu}
\upsilon=
65\times 2^{12}\left( 6\sqrt{2}S_{1}
\max\{{\sqrt{4(\sigma^\#)^2+ \sigma_\#}\over \sigma_\#},
{\sqrt{4(k^\#)^2+k_\#}\over k_\#}\}
+1\right)^2,
\end{equation}
with $S_{1}$ being a Sobolev continuity constant
 (see Remark \ref{rsob}),
if provided by the data smallness conditions $\|g\|_{p,\Gamma_N}<1$ and
 $\mathcal{Q}(1)<1$,
where $  \mathcal{Q}$ is given in (\ref{defQ}).
\end{theorem}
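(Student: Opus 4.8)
The plan is to solve $(\mathcal P)$ by a Schauder fixed point argument carried out in $C(\overline\Omega)$. The structural fact that drives everything in dimension $n=2$ is that for each $p>2$ the embedding $W^{1,p}(\Omega)\hookrightarrow C^{0,1-2/p}(\overline\Omega)$ is compact, with continuity constant $C_\infty$ as recorded before the statement; this is what simultaneously renders the quadratic nonlinearities tractable for $p$ only slightly above $2$ and supplies the compactness needed for the fixed point. Given $\bar\theta\in C(\overline\Omega)$, first solve the electrical subproblem, i.e. the second variational identity of $(\mathcal P)$ with $\bar\theta$ substituted for $\theta$ in all coefficients, obtaining $\phi=\phi(\bar\theta)\in V_{p}$; then solve the thermal subproblem, i.e. the first variational identity with $\bar\theta$ frozen in every coefficient and in the drift terms — so that only $\int_\Omega (k(\cdot,\bar\theta)\nabla\theta)\cdot\nabla v\,\mathrm{dx}$ and $\int_\Gamma f_\lambda(\cdot,\bar\theta)|\theta|^{\ell-2}\theta v\,\mathrm{ds}$ carry the unknown — and with $\phi=\phi(\bar\theta)$ inserted. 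This defines a map $\mathcal T:\bar\theta\mapsto\theta$, whose fixed points are exactly the weak solutions of $(\mathcal P)$.

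Each linear subproblem is well posed first at the $W^{1,2}$ level: for the electrical one by Lax--Milgram using the coercivity constant $\sigma_\#$ from (H2) and the zero-mean/compatibility structure of $V_{p'}$ and of $g$ in (H4); for the thermal one by monotone operator theory, since with $f_\lambda(\cdot,\bar\theta)\ge b_\#>0$ frozen the term $\int_\Gamma f_\lambda(\cdot,\bar\theta)|\theta|^{\ell-2}\theta v\,\mathrm{ds}$ defines a monotone, $L^\ell(\Gamma)$-coercive operator in $\theta$ while the principal part is coercive via $k_\#$. The upgrade of both solutions to $W^{1,p}$ with $p>2$ is exactly the higher-integrability result proved in Sections \ref{schigh} and \ref{sce}: this is where the constant $\upsilon$ in \eqref{defu} is born, the factors $\sqrt{4(\sigma^\#)^2+\sigma_\#}/\sigma_\#$ and $\sqrt{4(k^\#)^2+k_\#}/k_\#$ measuring the oscillation/ellipticity ratio of the merely discontinuous leading tensors $\sigma(\cdot,\bar\theta)$ and $k(\cdot,\bar\theta)$; the admissible range $2<p<2+1/(\upsilon-1)$ is precisely the one for which the reverse-H\"older (Gehring-type) bootstrap of Section \ref{reverse} closes with these explicit constants.

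Next one fixes the ball. Testing the electrical identity with $\phi$, using $\|\phi\|_{\infty}\le C_\infty\|\nabla\phi\|_{p}$ together with (H4) (where $\delta=1$ gives $g\in L^{3}(\Gamma_{\rm N})$, so $gw$ is integrable on $W^{1,p'}$ as noted), the trace constant $S_1$ of Remark \ref{rsob}, and then the $W^{1,p}$-estimate for the $\sigma(\cdot,\bar\theta)$-equation, yields $\|\nabla\phi\|_{p}\le$ an explicit increasing function of $\|g\|_{p,\Gamma_{\rm N}}$ and $\|\bar\theta\|_{1,p,\ell}$. Testing the thermal identity with $\theta$, the dangerous terms $\int_\Omega\sigma(\cdot,\bar\theta)\alpha_{\rm s}(\cdot,\bar\theta)(\bar\theta+\phi)\nabla\bar\theta\cdot\nabla\theta\,\mathrm{dx}$ and $\int_\Omega\sigma(\cdot,\bar\theta)\phi\nabla\phi\cdot\nabla\theta\,\mathrm{dx}$ are controlled by combining (H1)--(H2), the $L^\infty$ bound on $\phi$, and the confinement $\|\bar\theta\|_\infty\le C_\infty\|\bar\theta\|_{1,p,\ell}$, while the boundary terms use (H3)--(H4) and $S_1$; after applying the $W^{1,p}$-estimate for the $k(\cdot,\bar\theta)$-equation one arrives at $\|\mathcal T\bar\theta\|_{1,p,\ell}\le\mathcal Q(1)$ whenever $\|\bar\theta\|_{1,p,\ell}\le1$ and $\|g\|_{p,\Gamma_{\rm N}}<1$, where $\mathcal Q$ is the expression of \eqref{defQ}. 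Hence $\mathcal Q(1)<1$ makes the set $K=\{v\in V_{p,\ell}:\|v\|_{1,p,\ell}\le1\}$ — a closed bounded convex subset of $C(\overline\Omega)$ by the Morrey embedding and weak lower semicontinuity of the norm — invariant under $\mathcal T$. On $K$, compactness of $\mathcal T$ is immediate from the uniform $W^{1,p}$-bound and the compact embedding $W^{1,p}(\Omega)\hookrightarrow\hookrightarrow C(\overline\Omega)$; continuity follows because, if $\bar\theta_m\to\bar\theta$ uniformly, the Carath\'eodory property in (H1)--(H3) and dominated convergence (via the bounds \eqref{amm}, \eqref{smm}, \eqref{fmm}) give a.e.\ convergence of the frozen coefficients, whence stability of the electrical problem gives $\phi(\bar\theta_m)\to\phi(\bar\theta)$ in $V_p$ and then stability of the monotone thermal problem gives $\mathcal T\bar\theta_m\to\mathcal T\bar\theta$ in $V_{p,\ell}$, hence in $C(\overline\Omega)$. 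Schauder's fixed point theorem then produces $\theta\in K$ with $\mathcal T\theta=\theta$, and $(\theta,\phi(\theta))\in V_{p,\ell}\times V_p$ is the required solution.

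The crux — and the only genuinely delicate point — is the self-consistent a priori estimate: the thermal equation is quadratic in $(\theta,\phi)$ through the Joule/Thomson-type terms $\sigma\phi\nabla\phi$ and $\sigma\alpha_{\rm s}\phi\nabla\theta$, so invariance of $\mathcal T$ on a ball is not automatic and forces both the dimensional restriction $n=2$ (so that $W^{1,p}\hookrightarrow L^\infty$ already for $p$ marginally above $2$) and the smallness of $\|g\|_{p,\Gamma_{\rm N}}$ and of $\mathcal Q(1)$. Carrying all constants explicitly through the reverse-H\"older bootstrap — rather than suppressing them by a compactness/contradiction argument as in \cite{cp,shen} — is what makes the bookkeeping in Sections \ref{reverse}--\ref{sce} heavy and is the reason the threshold $\upsilon$ in \eqref{defu} takes the particular numerical form displayed.
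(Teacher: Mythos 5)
Your outline is essentially the paper's proof: freeze $\bar\theta$ in the coefficients, solve the decoupled electrical and thermal auxiliary problems via Propositions~\ref{existn}/\ref{exist} and upgrade to $W^{1,p}$ via Theorems~\ref{main1n}/\ref{main1} (this is exactly where the explicit constant $\upsilon$ comes from, as you say), obtain the self-map property of $\mathcal T$ from the smallness of $\|g\|_{p,\Gamma_{\rm N}}$ and $\mathcal Q(1)$, and apply a Schauder-type fixed point theorem using the compact embedding $W^{1,p}(\Omega)\hookrightarrow\hookrightarrow C(\overline\Omega)$ for $p>n=2$. The only real deviation is cosmetic: you run Schauder in $C(\overline\Omega)$ with the strong topology, whereas the paper invokes the Tychonoff extension of Schauder in $V_{p,\ell}$ with the weak topology (Theorem~\ref{fpt}); both hinge on the very same compactness, so the choice is a matter of taste (likewise, your taking $K=\overline B_1(0)$ directly is a mild simplification of the paper's step of first locating a root $R<1$ of $\mathcal Q(R)=R$).

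One small imprecision worth flagging: in the continuity argument you claim that ``stability'' yields $\phi(\bar\theta_m)\to\phi(\bar\theta)$ strongly in $V_p$ and then $\mathcal T\bar\theta_m\to\mathcal T\bar\theta$ strongly in $V_{p,\ell}$. That strong $W^{1,p}$ convergence is not what you get for free, since the drift data $\nabla\bar\theta_m$ only converge \emph{weakly} in $L^p$; energy-type subtraction only delivers weak convergence of $\nabla\phi(\bar\theta_m)$. Fortunately this overclaim is harmless: the $W^{1,p}$-boundedness, the compact embedding into $C(\overline\Omega)$, and uniqueness of the limiting auxiliary problems already yield $\mathcal T\bar\theta_m\to\mathcal T\bar\theta$ in $C(\overline\Omega)$, which is all that the Schauder argument carried out in $C(\overline\Omega)$ actually requires (and this subsequence-plus-identification scheme is precisely what the paper does, with weak $W^{1,p}$ convergence together with the Krasnoselski/Lebesgue arguments for the Nemytskii operators).
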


\begin{remark}\label{rsob}
For $1<q<n$,  the best continuity constant of 
 the Sobolev embedding $  {W}^{1,q}(\Omega)
 \hookrightarrow { L}^{q^*}(\Omega)$,
with   $q^*=qn/(n-q)$ being the critical Sobolev exponent, is 
(for smooth functions that decay at infinity, \cite{tale})
\[
S_q=\pi^{-1/2}n^{-1/q}\left({q-1\over n-q}\right)^{1-1/q}\left[
{\Gamma(1+n/2)\Gamma(n)\over \Gamma (n/q)\Gamma(1+n-n/q)}\right]^{1/n}.
\]
For $1^*=n/(n-1)$,  there exists the limit  constant
$S_1=\pi^{-1/2}n^{-1}[\Gamma(1+n/2)]^{1/n}$ \cite{tale}.
\end{remark}

\section{Abstract main results}
\label{state}

Let $\Omega\subset \mathbb{R}^n$ ($n\geq 2$) be a domain of class $C^{1}$
with the following characteristics.
\begin{definition}\label{domega}
Its boundary $\partial\Omega$ is constituted by two disjoint
 open $(n-1)$-dimensional sets,
$\Gamma_{\rm N}$ and $\Gamma$,
such that $\partial\Omega=\bar\Gamma_{\rm N}\cup \bar\Gamma$.
\end{definition}
We consider $\Gamma_{\rm N}$ over which the Neumann 
boundary condition is taken into account,
and $\Gamma$ over which the radiative effects may occur.
Each one, $\Gamma_{\rm N}$ and $\Gamma$, may be alternatively of zero $(n-1)$-Lebesgue
measure.

We study the following boundary value problem, in the sense of distributions,
\begin{align}
-\nabla\cdot(   \mathsf{A}
\nabla u)=f-\nabla\cdot{\bf f}&\mbox{ in }\Omega;\label{omega}\\
(\mathsf{A}\nabla u-{\bf f})\cdot{\bf n}+b(u)=h&\mbox{ on }\Gamma;
\label{robin}\\
(\mathsf{A}\nabla u-{\bf f})\cdot{\bf n}=g&\mbox{ on }\Gamma_{\rm N}, \label{gama}
\end{align}
where $\bf n$ is the unit outward normal to the boundary $\partial\Omega$.
Whenever the $(n\times n)$-matrix of the leading coefficient
is $\mathsf{A}=aI$, where $a$ is a real function and $I$ denotes
the identity matrix,  the elliptic  equation stands for isotropic materials.
Our problem includes the conormal derivative boundary value problem if 
provided by $\Gamma=\partial\Omega$
(or equivalently $\Gamma_{\rm N}=\emptyset$). The problem (\ref{omega})-(\ref{gama})
is the so-called mixed 
Robin-Neumann problem if $b$ is linear in  (\ref{robin}).
 
Assume
\begin{description}
\item[(A)]
 $\mathsf{A}=[A_{ij}]_{i,j=1,\cdots,n}
\in [L^\infty(\Omega)]^{n\times n}$ is uniformly elliptic, and uniformly bounded:
\begin{align}\label{amin}
\exists  a_\#>0,&\quad
A_{ij}(x)\xi_i\xi_j\geq a_\#|\xi|^2,
\quad\mbox{ a.e. }x\in\Omega,\ \forall \xi\in\mathbb{R}^n;\\
\exists  a^\#>0,&\quad \|\mathsf{A}\|_{\infty,\Omega}\leq a^\#.\label{amax}
\end{align}
\item[(B)]
 $b:\Gamma\times \mathbb{R}\rightarrow \mathbb{R}$ is a Carath\'eodory function
such that it is monotone with respect to the last variable, and it has 
 $(\ell-1)$-growthness properties:
\begin{align}\label{bmin}
\exists  b_\# >0, &\quad b(x,T){\rm sign}(T)\geq  b_\#|T|^{\ell-1};\\
\exists  b^\#>0, &\quad |b(x,T)|\leq b^\#|T|^{\ell-1},\label{bmax}
\end{align} 
for a.e. $x\in\Gamma$, and for all $T\in \mathbb{R}$.
\end{description}

\begin{remark}\label{rmo}
If $b(T)=|T|^{\ell-2}T$, for all $T\in\mathbb{R}$, the property of
strong monotonicity occurs with $b_\#=2^{(2-\ell)}$ \cite[Lemma 3.3]{dpz}.
\end{remark}

Our main abstract results are stated as follows.
We observe that (\ref{cotam1}) has
 no a standard format in order to avoid inflated involved constants.
\begin{theorem}[meas$(\Gamma)>0$]\label{main1}
Let $\delta>0$. 
Let   ${\bf f}\in {\bf L}^{2+\delta}(\Omega)$,  $f\in L^{2+\delta}(\Omega)$,
$g\in L^{2+\delta}(\Gamma_{\rm N})$, 
 and $h\in L^{2+\delta}(\Gamma)$.
Under the assumptions (A)-(B), there exists  a weak solution 
$u \in V_{2,\ell}$ to (\ref{omega})-(\ref{gama}), in the sense
\begin{align}
\int_{\Omega}    ( \mathsf{A}\nabla u)\cdot
\nabla v \mathrm{dx}+\int_{\Gamma}b(u) v \mathrm{ds}
=\int_{\Omega}{\bf f}\cdot\nabla v \mathrm{dx}
+\int_{\Omega}fv \mathrm{dx}+\nonumber\\
+\int_{\Gamma_{\rm N}}gv \mathrm{ds}+\int_{\Gamma}hv \mathrm{ds},
 \quad\forall v\in V_{2,\ell},\label{pbu}
\end{align}
such that belongs to $W^{1,2+\varepsilon}(\Omega)$
for any $\varepsilon\in [0,\delta]\cap [0, 4 ((n+2)(\upsilon-1)) ^{-1} [$, where
$\upsilon=\upsilon_{\rm U}(a_\#,a^\#)$ is given by (\ref{defuu}).
In particular, if $\mathcal{M}={\rm  ess } \sup_\Omega |u| $ then
\begin{align}
\| \nabla u\|_{2+\varepsilon,\Omega}^{2+\varepsilon}
\leq (2^n+1)\left[
\left(8\over (r_\#)^n\right)^{\varepsilon/2} 
Z_1(\upsilon)
\| \nabla u\|_{2, \Omega}^{2+\varepsilon} +\right.\nonumber\\
+\left( 2^{(n+1)\varepsilon/2}Z_1(\upsilon)
+Z_2(\upsilon) \right)
\| {\mathcal F }(a_\#)\|_{2+\varepsilon, \Omega}^{2+\varepsilon}
+\nonumber\\
\left.+
\left( Z_1(\upsilon)+ Z_2(\upsilon)\right)
 \| {\mathcal H}(a_\#, b^\#)\|_{2+\varepsilon,\partial\Omega}^{2+\varepsilon}\right]
,\quad\label{cotam1}
\end{align}
where 
 \begin{align}
 {\mathcal H}(a_\#, b^\#) &={2K_{{2n/( n+1)}}
\over (a_\#)^{1/2}}
\left({2\over  a_\#}+{ 2^{-1/n}}\right)^{1/2} 
|g\chi_{\Gamma_{\rm N}}+\left(h+b^\#\mathcal{M}^{\ell-1}
\right) \chi_{\Gamma}|; \nonumber \\
\label{fa}
{\mathcal F}(a_\#) &= (a_\#) ^{-1/2}\left[\left({2\over  a_\#}+ 2
\right)|{\bf f}|^{2} 
+{ 1\over \nu_3}|f|^{2} \right]^{1/2};\\
\label{zz1}
Z_1(\upsilon)&= {4
\over 4-(n+2)(\upsilon-1)\varepsilon}\times 2^{n(1+\varepsilon /2)};\\
\label{zz2}
Z_2(\upsilon)&=
{ \upsilon(4+(n+2)\varepsilon)
\over 4-(n+2)(\upsilon-1)\varepsilon}\times 2^{n(1+\varepsilon/2)},
\end{align} 
with $ K_{2n/(n+1)}=
[\Gamma(n)]^{1/(n+1)}[ (\sqrt{\pi}n)^{n-1}\Gamma((n+1)/2)]^{-1/(n+1)}$
(see Remark \ref{rk}), and $r_\#$ according to (\ref{rstm}).
\end{theorem}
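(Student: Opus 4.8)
The plan is to prove the existence of a weak solution first, and then to bootstrap the integrability of its gradient from $L^2$ to $L^{2+\varepsilon}$ by a reverse-H\"older/Gehring argument in which every constant is tracked explicitly.

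For existence, I would introduce the operator $\mathcal{A}\colon V_{2,\ell}\to V_{2,\ell}^*$ defined by $\langle\mathcal{A}u,v\rangle=\int_\Omega(\mathsf{A}\nabla u)\cdot\nabla v\,\mathrm{dx}+\int_\Gamma b(u)\,v\,\mathrm{ds}$, and the functional $F\in V_{2,\ell}^*$ collecting the four data terms on the right of (\ref{pbu}); the integrability hypotheses together with the trace embedding $V_{2,\ell}\hookrightarrow L^2(\partial\Omega)$ give $F\in V_{2,\ell}^*$ with an explicit norm bound. Assumption (A), via (\ref{amin})--(\ref{amax}), makes the volume part of $\mathcal{A}$ bounded and $a_\#$-coercive in $\|\nabla\cdot\|_{2,\Omega}$, and assumption (B), via (\ref{bmin})--(\ref{bmax}), makes the boundary part well defined on $L^\ell(\Gamma)$, monotone and $b_\#$-coercive in $\|\cdot\|_{\ell,\Gamma}$; since $\mathrm{meas}(\Gamma)>0$, a Poincar\'e-type inequality on $V_{2,\ell}$ (the boundary integral controls the mean value, hence the full $W^{1,2}$-norm) upgrades this to coercivity in the $V_{2,\ell}$-norm. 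As $\mathcal{A}$ is then bounded, coercive, monotone and hemicontinuous, the Browder--Minty theorem yields a solution $u$ of (\ref{pbu}) together with the a priori bound $\|\nabla u\|_{2,\Omega}+\|u\|_{\ell,\Gamma}\le C(a_\#,b_\#,\Omega)\|F\|_*$; the uniform bound $\mathcal{M}=\mathrm{ess}\,\sup_\Omega|u|$ is finite once $2+\varepsilon>n$ (via Morrey--Sobolev), and it is what lets the boundary nonlinearity be absorbed into a Neumann-type datum through $|b(u)|\le b^\#\mathcal{M}^{\ell-1}$ on $\Gamma$, which explains the term $(h+b^\#\mathcal{M}^{\ell-1})\chi_\Gamma$ inside $\mathcal{H}$.

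For the higher integrability I would, first, establish a reverse-H\"older inequality for $\nabla u$ at every scale $R\le r_\#$, with $r_\#$ of (\ref{rstm}) the threshold below which the $C^1$ chart flattens $\partial\Omega$ with controlled distortion of $\mathsf{A}$ and of the volume and surface elements. Testing (\ref{pbu}) with $v=\eta^2(u-c)$, $\eta$ a standard cutoff in $B_{2R}(x_0)\cap\bar\Omega$, and choosing $c=u_{B_{2R}}$ in the interior and $c=0$ at the boundary, yields a Caccioppoli inequality
\[
\int_{B_R}|\nabla u|^2\,\mathrm{dx}\le\frac{C(a_\#,a^\#)}{R^2}\int_{B_{2R}}|u-c|^2\,\mathrm{dx}+(\text{explicit }\mathcal{F},\mathcal{H}\text{ terms}),
\]
into which I insert the Sobolev--Poincar\'e inequality $\|u-u_{B_{2R}}\|_{2,B_{2R}}\le C_S\,R\,\|\nabla u\|_{2_*,B_{2R}}$ with $2_*=2n/(n+2)$ (so that $(2_*)^*=2$), whose constant is $S_{2n/(n+2)}$ and degenerates to the limit constant $S_1$ of Remark \ref{rsob} when $n=2$. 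In normalized form the two estimates combine into a scale-invariant reverse-H\"older inequality
\[
\frac{1}{|B_R|}\int_{B_R}|\nabla u|^2\,\mathrm{dx}\le\upsilon\Bigl(\frac{1}{|B_{2R}|}\int_{B_{2R}}|\nabla u|^{2_*}\,\mathrm{dx}\Bigr)^{(n+2)/n}+(\text{data}),
\]
whose structural constant is exactly $\upsilon=\upsilon_{\rm U}(a_\#,a^\#)$ of (\ref{defuu}), assembled from the Caccioppoli and Sobolev constants and the chart distortion. Then I would feed this into a version of Gehring's lemma proved with tracked constants --- a Calder\'on--Zygmund stopping-time decomposition of the super-level sets of $G:=|\nabla u|^{2_*}$ together with a layer-cake interpolation of $\int G^{1+\beta}$ over those levels --- to conclude $\nabla u\in L^{2+\varepsilon}$ for every $\varepsilon$ strictly below $4/((n+2)(\upsilon-1))$: this threshold is precisely the self-improvement range $\sim 1/(\upsilon-1)$ that the stopping-time argument delivers, and the blow-up of the constant as $\varepsilon$ approaches it is the factor $4/(4-(n+2)(\upsilon-1)\varepsilon)$ appearing in $Z_1,Z_2$ of (\ref{zz1})--(\ref{zz2}). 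Finally, a finite covering of $\bar\Omega$ by balls of radius $\le r_\#$ with bounded overlap ($2^n+1$) assembles the local bounds into the global estimate (\ref{cotam1}), the remaining powers of $2$ and of $r_\#^{-1}$ coming from the covering multiplicities and the interpolation exponents.

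The hard part is carrying out Gehring's lemma \emph{explicitly}. The standard proofs, and the real-variable arguments of Caffarelli--Peral and Shen cited in the introduction, use compactness or contradiction and therefore conceal the dependence of both $\varepsilon$ and the multiplicative constant on the structural ratio $a^\#/a_\#$ that is packed into $\upsilon$; re-running the stopping-time and interpolation steps so that the admissible exponent is pinned down as $\varepsilon<4/((n+2)(\upsilon-1))$ and the constant is exhibited in the closed form $Z_1(\upsilon),Z_2(\upsilon)$ is the computational core of the proof. A secondary difficulty, which fixes the scale $r_\#$, is ensuring that the boundary flattening perturbs $\mathsf{A}$ and the volume and surface elements only by factors that can be absorbed into $\upsilon_{\rm U}$ uniformly over all charts, without spoiling its explicit form (\ref{defuu}).
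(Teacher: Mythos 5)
Your proposal follows essentially the same route as the paper: existence by monotone-operator/coercivity theory, a Caccioppoli inequality from testing with $\eta^2(u-c)$ combined with the Sobolev--Poincar\'e inequality at exponent $2n/(n+2)$ to get a scale-invariant reverse H\"older inequality (with a surface term near $\partial\Omega$ after flattening by the $C^1$ charts), a Gehring-type lemma with explicitly tracked constants proved via Calder\'on--Zygmund decomposition and a Stieltjes-integral interpolation over level sets, and finally a finite cover of $\overline\Omega$ with overlap multiplicity $2^n+1$ (Besicovitch) to globalize, with $r_\#$ the minimal radius in that cover. This matches the paper's Propositions \ref{gia-geh}, \ref{int1}, \ref{propup} and the covering argument of Section \ref{scm1}, including the identification of the admissible range $\varepsilon<4/((n+2)(\upsilon-1))$ as $(p-t)/(\varkappa-1)$ with $p=2$, $t=2n/(n+2)$.
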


The solvability of the boundary-value problem (\ref{omega})-(\ref{gama})
and some properties of the weak solutions can be found in \cite{lc-ijpde,sinica}.
In particular, for $f=g=0$ we have the following $L^\infty$-estimate ($p>n\geq 2$)
\begin{equation}\label{supess}
{\rm  ess } \sup_{\Omega\cup \partial\Omega}|u|\leq 1
+
\mathcal{Z}_1(a_\#,b_\#)\|{\bf f}\|_{p,\Omega} + 
\mathcal{Z}_2(a_\#,b_\#)\|h\|_{p,\Gamma} ,
\end{equation}
with
\begin{align*}
 \mathcal{Z}_1(a_\#,b_\#)&=
 \left({|\Omega|^{1/(2\alpha)}\over a_\#}
 +{1\over \sqrt{a_\#b_\#}}\right)
|\Omega|^{\frac{p-2}{4p}} \mathcal{Z} ;\\
 \mathcal{Z}_2(a_\#,b_\#) &= \left({1\over b_\#}
+{|\Omega|^{1/( 2\alpha)}\over  \sqrt{a_\#b_\#}}\right) \mathcal{Z} ;\\
\mathcal{Z} &=
2^{\frac{\alpha(p-2)+2p}{ \alpha(p-2)-2p}}
(|\Omega|+|\partial\Omega|)^{\frac{\alpha(p-2)-2p}{4p\alpha}}
( S_{2\alpha/(\alpha+2)}+K_{2\alpha /(\alpha+2) }),
\end{align*}
where  $\alpha>2p/(p-2)$.

\begin{remark}\label{rk}
For $1<q<n$, the best continuity constant of the embedding
$   W^{1,q}(\Omega)\hookrightarrow L^{q_*}(\partial\Omega)$,
 with $q_*=q(n-1)/(n-q)$ being the critical trace exponent,
is \cite{bond}
\[
K_q=\pi^{(1-q)/2}\left({q-1\over n-q}\right)^{q-1}\left[
{\Gamma\left({q(n-1)\over 2(q-1)}\right)\Big/ \Gamma \left({
n-1\over 2(q-1)}\right)}\right]^{(q-1)/(n-1)},\]
where $\Gamma$ stands for the Gamma function.
\end{remark}

\begin{theorem}[meas$(\Gamma)=0$]\label{main1n}
Let the assumption (A) be fulfilled, and $\delta>0$. 
If  ${\bf f}\in {\bf L}^{2+\delta}(\Omega)$,  $f\in L^{2+\delta}(\Omega)$,
and $g\in L^{2+\delta}(\Gamma_{\rm N})$, verify the compatibility condition
\begin{equation}\label{cc}
 \int_{\Omega}f\mathrm{dx}
+\int_{\Gamma_{\rm N}}g \mathrm{ds}=0,
\end{equation}
then the Neumann problem
\begin{equation}
\int_{\Omega}    ( \mathsf{A}\nabla u)\cdot
\nabla v \mathrm{dx}
=\int_{\Omega}{\bf f}\cdot\nabla v \mathrm{dx}
+\int_{\Omega}fv \mathrm{dx}
+\int_{\Gamma_{\rm N}}gv \mathrm{ds},
 \quad\forall v\in V_{p'},\label{pbun}
\end{equation}
admits a unique weak solution  $u\in V_{p}$ satisfying
(\ref{cotam1}) with  $ {\mathcal H}$ being replaced by
\begin{equation}\label{gan}
 {\mathcal G}(a_\#)= 2{K_{{2n/( n+1)}}
\over (a_\#)^{1/2}}
\left({2\over a_\#}+{ 2^{-1/n}}\right)^{1/2} 
|g|
 .\end{equation}
\end{theorem}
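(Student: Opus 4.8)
\emph{Proof plan.} The argument is a specialization of the one behind Theorem \ref{main1}: here $\Gamma$ is Lebesgue--negligible, so the boundary term $\int_\Gamma b(u)v\,\mathrm{ds}$ and the datum $h$ are absent, while $\Gamma_{\rm N}$ exhausts $\partial\Omega$ up to a null set. Consequently the only boundary source surviving in every estimate is $g$, and this is precisely why $\mathcal H$ of Theorem \ref{main1} is replaced throughout by $\mathcal G$ of (\ref{gan}).

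First I would settle existence and uniqueness at the $L^2$ level. On the zero-mean space $V_2$ the bilinear form $(u,v)\mapsto\int_\Omega(\mathsf{A}\nabla u)\cdot\nabla v\,\mathrm{dx}$ is bounded by (\ref{amax}) and coercive by the ellipticity (\ref{amin}) combined with the Poincaré--Wirtinger inequality; moreover the compatibility condition (\ref{cc}) ensures that the right-hand side of (\ref{pbun}) vanishes on constants, hence defines a bounded linear functional on $V_2$, the terms in ${\bf f}$, $f$ and $g$ being controlled by the Sobolev and trace embeddings of $W^{1,2}(\Omega)$ together with the $L^{2+\delta}$ integrability of the data. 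The Lax--Milgram theorem then produces a unique $u\in V_2$ solving (\ref{pbun}) for all $v\in V_2$; the normalization $\int_\Omega u\,\mathrm{dx}=0$ is exactly what kills the kernel of the Neumann operator and yields uniqueness.

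Next comes the self-improvement of integrability, which is where (\ref{cotam1}) is produced. I would run the reverse-Hölder machinery of Section \ref{reverse}: testing (\ref{pbun}) with $v=(u-c_B)\eta^2$, where $\eta$ is a standard cutoff and $c_B$ is the mean of $u$ over the pertinent ball $B$, yields interior Caccioppoli inequalities on balls $B\subset\Omega$ and, for balls centered at $\partial\Omega$, boundary Caccioppoli inequalities in which only the datum $g$ appears (no $b(u)$, no $h$, since $\mathrm{meas}(\Gamma)=0$). Feeding these into the Sobolev--Poincaré inequality on balls gives a reverse-Hölder inequality with constant $\upsilon=\upsilon_{\rm U}(a_\#,a^\#)$ of (\ref{defuu}) and right-hand source assembled from ${\mathcal F}(a_\#)$ of (\ref{fa}) and ${\mathcal G}(a_\#)$ of (\ref{gan}). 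The quantitative Gehring-type lemma of Section \ref{schigh} then upgrades $\nabla u$ to $L^{2+\varepsilon}(\Omega)$ for every $\varepsilon\in[0,\delta]\cap[0,4((n+2)(\upsilon-1))^{-1}[$, with the explicit constants $Z_1(\upsilon)$, $Z_2(\upsilon)$ of (\ref{zz1})--(\ref{zz2}), thereby delivering (\ref{cotam1}) with ${\mathcal G}$ in place of ${\mathcal H}$. Finally, $u\in W^{1,p}(\Omega)$ with $p=2+\varepsilon$ and $\int_\Omega u\,\mathrm{dx}=0$ means $u\in V_p$; a density argument — smooth zero-mean functions being dense in $V_{p'}$, and $u\in W^{1,p}(\Omega)$ making both sides continuous against $W^{1,p'}(\Omega)$ — extends the variational identity from $v\in V_2$ to every $v\in V_{p'}$.

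The main obstacle, as in Theorem \ref{main1}, is to carry every constant explicitly through the Gehring iteration, since the customary compactness/contradiction proof of higher integrability provides no such expressions; but because the present Neumann problem is a genuine specialization of the Robin--Neumann one already handled, no new analytic difficulty arises beyond checking that discarding the $b(u)$ and $h$ contributions leaves $\upsilon$, $Z_1$, $Z_2$ unchanged and simply collapses $\mathcal H$ to $\mathcal G$.
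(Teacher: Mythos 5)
Your outline follows the paper's route exactly: the paper proves Theorem~\ref{main1n} by invoking Propositions~\ref{existn} (Lax--Milgram on the zero-mean space) and~\ref{neumann} (the Neumann boundary Caccioppoli/reverse-H\"older estimate without $b(u)$ and $h$) in place of Propositions~\ref{exist} and~\ref{propup}, and then says the argument is \emph{mutatis mutandis} that of Theorem~\ref{main1}. Your identification of the specialization --- dropping the $b(u)$ and $h$ contributions collapses $\mathcal H$ to $\mathcal G$ while leaving $\upsilon_{\rm U}$, $Z_1$, $Z_2$ untouched --- is precisely what Proposition~\ref{neumann} records, and your density argument for passing from test functions in $V_2$ to $V_{p'}$ is a legitimate (if unstated in the paper) way to finish.

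One step you elide, and it is where the remaining explicit constants in~(\ref{cotam1}) actually originate, is the globalization from the local estimates~(\ref{ri}) and~(\ref{ru}) to the whole domain. Propositions~\ref{int1} and~\ref{neumann} only produce estimates on half-cubes $Q_{r/2}(x)$ and $Q_{r/2}(x)\cap\Omega$; the paper then covers $\overline\Omega$ by a Besicovitch family of such half-cubes, extracts a finite subcover (defining $r_\#$ in~(\ref{rstm}) as the minimum radius), and sums the local estimates using the Besicovitch overlap bound, which is exactly where the factor $2^n+1$ and the $(8/(r_\#)^n)^{\varepsilon/2}$ term in~(\ref{cotam1}) come from. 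You go directly from the local Gehring upgrade to ``thereby delivering~(\ref{cotam1}),'' which is a genuine jump: without the covering and summation step, the conclusion is only local and the stated constants have no provenance. Since this covering argument is shared verbatim with the proof of Theorem~\ref{main1}, adding a sentence pointing to it would close the gap.
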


Throughout this work, we adopt the standard notations:
 
- $Q_R(x)$ denotes the cubic interval 
(cubes with edges parallel to coordinate planes), that is,
 $Q_R(x)$ denotes the open ball of  radius $R>0$ centered at
 the point $x\in\mathbb{R}^n$, defined by
 \[Q_R(x)=\{y\in\mathbb{R}^n:\ |y-z|:=\max_{1\leq i\leq n}|y_i-x_i|<R\}.
 \]
 We call by $Q$ any cube that is an orthogonal transformation of a cubic 
 interval.

 -  $A[v>k]=\{x\in A:\ v(x)>k\}$, where $v\in L^1(A)$,
$v\geq 0$ in $A$, with the set $A$ being either
   $\Omega$, $\Gamma_{\rm N}$, $\Gamma$, $\partial\Omega$ or $\bar \Omega$.
Moreover, the significance of $|A|$ stands  the Lebesgue measure of
 a set of $\mathbb{R}^n$, and also
for the $(n-1)$-Lebesgue measure.

\section{Reverse H\"older inequalities with increasing supports}
\label{reverse}

In this section, a $C^{0,1}$ domain is sufficient to be assumed.

Let us recall a result on the Stieltjes integral in the form that
we are going to use (for the general form see \cite{ark}).
\begin{lemma}\label{hh}
Suppose that $q,t_0\in ]0,\infty[$, and $a\in ]1,\infty[$.
If $h,H:[t_0,\infty[\rightarrow [0,\infty[$ are nonincreasing functions such that
\begin{equation}\label{lim0}
\lim_{t\rightarrow\infty}h(t)=
\lim_{t\rightarrow\infty}H(t)=0,
\end{equation}
and that
\begin{equation}\label{hhq}
-\int_t^\infty \tau^q \mathrm{dh(\tau)}\leq a[t^qh(t)+H(t)],\quad
\forall t\geq t_0,
\end{equation}
then, for $\gamma\in [q,aq/(a-1)[$
\begin{align}\label{hhtese}
-\int_{t_0}^\infty t^\gamma \mathrm{dh(t)}\leq {q\over aq-(a-1)\gamma}
\left(-\int_{t_0}^\infty t^q \mathrm{dh(t)}\right) +\nonumber\\
+{a\gamma\over aq-(a-1)\gamma}
\left(-\int_{t_0}^\infty t^{\gamma-q} \mathrm{dH(t)}\right).
\end{align}
\end{lemma}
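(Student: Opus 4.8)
The plan is to establish the Stieltjes-integral inequality \eqref{hhtese} by a direct integration-by-parts argument combined with a Fubini-type exchange of order of integration, relying only on the monotonicity of $h,H$ and the structural hypothesis \eqref{hhq}. First I would reduce the two-sided estimate to manipulations on $-\int_{t_0}^\infty t^\gamma \,\mathrm{d}h(t)$, writing $t^\gamma = t^q \cdot t^{\gamma-q}$ and regarding $t^{\gamma-q}$ as the primitive $t^{\gamma-q} = (\gamma-q)\int_{t_0}^t s^{\gamma-q-1}\,\mathrm{d}s + t_0^{\gamma-q}$ (valid since $\gamma \geq q$). Substituting this into $-\int_{t_0}^\infty t^q t^{\gamma-q}\,\mathrm{d}h(t)$ and applying Fubini to the resulting double integral — legitimate because $h$ is nonincreasing so $-\mathrm{d}h$ is a nonnegative measure and the integrand is nonnegative — turns the left-hand side into
\[
t_0^{\gamma-q}\left(-\int_{t_0}^\infty t^q\,\mathrm{d}h(t)\right) + (\gamma-q)\int_{t_0}^\infty s^{\gamma-q-1}\left(-\int_s^\infty t^q\,\mathrm{d}h(t)\right)\mathrm{d}s.
\]

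Next I would bound the inner tail integral $-\int_s^\infty t^q\,\mathrm{d}h(t)$ using hypothesis \eqref{hhq}, which gives $\leq a[s^q h(s) + H(s)]$ for all $s \geq t_0$. Then the middle term is controlled by
\[
a(\gamma-q)\int_{t_0}^\infty s^{\gamma-q-1}\big(s^q h(s) + H(s)\big)\,\mathrm{d}s
= a(\gamma-q)\int_{t_0}^\infty s^{\gamma-1}h(s)\,\mathrm{d}s + a(\gamma-q)\int_{t_0}^\infty s^{\gamma-q-1}H(s)\,\mathrm{d}s.
\]
For the first of these I would undo the Fubini step in reverse, recognizing $\gamma\int_{t_0}^\infty s^{\gamma-1}h(s)\,\mathrm{d}s = -\int_{t_0}^\infty t^\gamma\,\mathrm{d}h(t) - t_0^\gamma h(t_0)$ after an integration by parts that uses $\lim_{t\to\infty}t^\gamma h(t)=0$; the boundary term $-t_0^\gamma h(t_0) \leq 0$ can be dropped. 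This produces a term $\frac{a(\gamma-q)}{\gamma}\big(-\int_{t_0}^\infty t^\gamma\,\mathrm{d}h(t)\big)$ on the right-hand side that can be absorbed into the left, provided $\frac{a(\gamma-q)}{\gamma} < 1$, i.e. precisely $\gamma < aq/(a-1)$. Similarly, $(\gamma-q)\int_{t_0}^\infty s^{\gamma-q-1}H(s)\,\mathrm{d}s$ integrates by parts to $-\int_{t_0}^\infty t^{\gamma-q}\,\mathrm{d}H(t) - t_0^{\gamma-q}H(t_0)$, again using $\lim_{t\to\infty}t^{\gamma-q}H(t)=0$ from \eqref{lim0} and discarding the nonpositive boundary term.

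Collecting terms, I would obtain
\[
\left(1 - \tfrac{a(\gamma-q)}{\gamma}\right)\left(-\int_{t_0}^\infty t^\gamma\,\mathrm{d}h(t)\right) \leq t_0^{\gamma-q}\left(-\int_{t_0}^\infty t^q\,\mathrm{d}h(t)\right) + a\left(-\int_{t_0}^\infty t^{\gamma-q}\,\mathrm{d}H(t)\right),
\]
and since $1 - \frac{a(\gamma-q)}{\gamma} = \frac{aq-(a-1)\gamma}{\gamma}$ and $t_0^{\gamma-q}\leq$ (one can in fact take $t_0 \leq$ the relevant scale, but more carefully the coefficient $t_0^{\gamma-q}$ must be handled — see below), dividing through yields \eqref{hhtese} with the stated constants $\frac{q}{aq-(a-1)\gamma}$ and $\frac{a\gamma}{aq-(a-1)\gamma}$. \textbf{The main obstacle} I anticipate is the bookkeeping around the factor $t_0^{\gamma-q}$: to land exactly on the claimed right-hand side (which has no explicit $t_0$ power multiplying the first term), one must either normalize so that $t_0=1$, or — more likely what the author intends — replace the crude primitive $t^{\gamma-q}=t_0^{\gamma-q}+(\gamma-q)\int_{t_0}^t s^{\gamma-q-1}\mathrm{d}s$ by instead using hypothesis \eqref{hhq} directly at $t=t_0$ to convert the $t_0^{\gamma-q}\big(-\int_{t_0}^\infty t^q\mathrm{d}h\big)$ contribution, or to absorb it by choosing the Fubini decomposition to start the inner integral at $t_0$ rather than at a running variable. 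Verifying that the nonnegativity needed for Fubini and the vanishing boundary conditions \eqref{lim0} are genuinely sufficient (no integrability of $-\mathrm{d}h$ against $t^\gamma$ is assumed a priori — it must be deduced, or the inequality read as holding in $[0,\infty]$) is the other delicate point; I would address it by first proving a truncated version on $[t_0,N]$ with all boundary terms retained, then letting $N\to\infty$ using monotone convergence and \eqref{lim0}.
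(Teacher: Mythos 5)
The paper itself gives no proof of this lemma (it is ``recalled'' from Arkhipova's work \cite{ark}), so your argument has to stand on its own. Your overall route --- write $t^\gamma=t^q\bigl[t_0^{\gamma-q}+(\gamma-q)\int_{t_0}^t s^{\gamma-q-1}\mathrm{d}s\bigr]$, apply Tonelli, insert \eqref{hhq} in the inner tail integral, and absorb --- is the standard and essentially correct one; it produces the absorption threshold $\gamma<aq/(a-1)$ and the second coefficient $a\gamma/(aq-(a-1)\gamma)$ exactly. The genuine gap is in the first coefficient. Carrying out your steps literally gives $\gamma t_0^{\gamma-q}/(aq-(a-1)\gamma)$ in front of $-\int_{t_0}^\infty t^q\mathrm{d}h$, not $q/(aq-(a-1)\gamma)$: even after normalizing $t_0=1$ you are left with $\gamma$ where the statement has $q$, and this loss of the factor $\gamma/q$ is caused precisely by your decision to \emph{discard} the boundary term $-t_0^\gamma h(t_0)$. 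To reach the stated constant you must keep that term and spend \eqref{hhq} once more at $t=t_0$, which yields $t_0^qh(t_0)\ge a^{-1}\bigl(-\int_{t_0}^\infty t^q\mathrm{d}h\bigr)-H(t_0)$; feeding this into the term $-\tfrac{a(\gamma-q)}{\gamma}t_0^{\gamma}h(t_0)$ reduces the coefficient of $-\int_{t_0}^\infty t^q\mathrm{d}h$ from $t_0^{\gamma-q}$ to $\tfrac{q}{\gamma}t_0^{\gamma-q}$, and the leftover $H(t_0)$ contribution is dominated by the $H$-term already present. You gesture at exactly this move (``use hypothesis \eqref{hhq} directly at $t=t_0$'') but do not execute it, and you do not notice that dropping the boundary term already costs you $\gamma/q$ independently of $t_0$. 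Note also that the residual power $t_0^{\gamma-q}$ is not removable: for $h(t)=t^{-\beta}$, $H\equiv0$, $a=\beta/(\beta-q)$, the inequality \eqref{hhtese} holds with equality at $t_0=1$ and fails for $t_0>1$, so the lemma must be read with $t_0\le1$ (the paper applies it with $t_0=1$). In a paper whose stated purpose is explicit constants, these discrepancies are not cosmetic.

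The second, smaller issue is the one you flag yourself: your integrations by parts invoke $t^\gamma h(t)\to0$ and $t^{\gamma-q}H(t)\to0$, neither of which follows from \eqref{lim0} (which only gives $h,H\to0$). Your proposed repair by truncation works, but only if you truncate the \emph{function} rather than the integral: set $h_N=h$ on $[t_0,N]$ and $h_N=0$ beyond, check that $h_N$ still satisfies \eqref{hhq}, prove the estimate for $h_N$ (where all boundary contributions at infinity vanish identically), and let $N\to\infty$; merely restricting the integrals to $[t_0,N]$ leaves an uncontrolled term $N^\gamma h(N)$. A cleaner alternative avoids integration by parts entirely: since $h(\infty)=0$ one has $h(s)=-\int_s^\infty\mathrm{d}h$, so Tonelli gives the exact identity $\gamma\int_{t_0}^\infty s^{\gamma-1}h(s)\mathrm{d}s=-\int_{t_0}^\infty(t^\gamma-t_0^\gamma)\mathrm{d}h(t)$ with no limit at infinity required, and similarly for $H$. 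The truncation also supplies the a priori finiteness of $-\int_{t_0}^\infty t^\gamma\mathrm{d}h$ that the absorption step silently needs.
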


Next, let us recall the Calderon-Zygmund subdivision argument
\cite[p. 127]{gia83}.
\begin{lemma}\label{caldz}
Let $Q$ be a open cube in $\mathbb{R}^n$,
$v\in L^1(Q)$, $v\geq 0$ in $Q$, and $\zeta> (v)_Q=
{1\over |Q|}\int_{Q}v(y)\mathrm{dy}$.
Then there exists a sequence of  cubes $\{Q_j\}_{j\geq 1}$,
with sides parallel to the axes and with disjoint interiors, such that
$v\leq\zeta$ a.e. in $ Q\setminus \left(\cup_{j\geq 1}Q_j\right)$, and
\[
\zeta<
{1\over |Q_j|}\int_{Q_j}v(y)\mathrm{dy}\leq 2^n\zeta.\]
\end{lemma}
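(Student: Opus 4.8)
The plan is to carry out the classical dyadic stopping-time (Calder\'on--Zygmund) construction. I would start by bisecting each edge of $Q$, partitioning it into $2^n$ congruent subcubes with sides parallel to the axes and with pairwise disjoint interiors. For each such subcube $Q'$ I compute the mean $(v)_{Q'}$ and apply the dichotomy: if $(v)_{Q'}>\zeta$ then $Q'$ is selected into the family $\{Q_j\}$ and is not subdivided any further; otherwise $Q'$ is bisected again into its $2^n$ children and the procedure is repeated on each child. Iterating this indefinitely produces an at most countable family $\{Q_j\}_{j\ge1}$ of selected cubes whose interiors are pairwise disjoint, since in the resulting dyadic tree no selected cube is a descendant of another.

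Next I would verify the two-sided bound on the averages over the selected cubes. By construction, each $Q_j$ has a dyadic parent $\widehat{Q_j}$ that was \emph{not} selected, so $(v)_{\widehat{Q_j}}\le\zeta$; this also covers the first generation of subcubes, whose parent is $Q$ itself, for which $(v)_Q<\zeta$ by hypothesis. Since $|\widehat{Q_j}|=2^n|Q_j|$ and $v\ge0$,
\[
\zeta<(v)_{Q_j}=\frac{1}{|Q_j|}\int_{Q_j}v\,\mathrm{dy}\le\frac{1}{|Q_j|}\int_{\widehat{Q_j}}v\,\mathrm{dy}=2^n(v)_{\widehat{Q_j}}\le 2^n\zeta,
\]
which is precisely the claimed estimate $\zeta<(v)_{Q_j}\le 2^n\zeta$.

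Finally I would deal with the complement $Q\setminus\bigcup_{j\ge1}Q_j$. Discard first the union of the boundaries of all dyadic subcubes, which is a Lebesgue-null set. Any remaining point $x$ of the complement lies in a strictly decreasing chain of never-selected dyadic cubes $Q\supset Q^{(1)}\supset Q^{(2)}\supset\cdots$ whose edge lengths are $2^{-k}$ times that of $Q$, so that the chain shrinks to $\{x\}$, and every $Q^{(k)}$ satisfies $(v)_{Q^{(k)}}\le\zeta$. Since the $Q^{(k)}$ shrink regularly to $x$ (bounded eccentricity), the Lebesgue differentiation theorem yields $(v)_{Q^{(k)}}\to v(x)$ for almost every such $x$, whence $v(x)\le\zeta$. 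The set on which this argument fails is the union of the dyadic skeleton and the exceptional set of the differentiation theorem, both of Lebesgue measure zero, so $v\le\zeta$ a.e.\ in $Q\setminus\bigcup_{j\ge1}Q_j$, which completes the proof.

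The only step that goes beyond elementary bookkeeping on the dyadic tree is the last one: identifying the pointwise value $v(x)$ with the limit of the averages $(v)_{Q^{(k)}}$, and thereby upgrading $(v)_{Q^{(k)}}\le\zeta$ to $v\le\zeta$. This rests on the Lebesgue differentiation theorem applied to the regular differentiation basis of dyadic cubes containing $x$, and it is also the reason the conclusion on the complement holds only almost everywhere rather than everywhere.
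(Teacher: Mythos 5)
Your proof is correct: it is the standard dyadic stopping-time (Calder\'on--Zygmund) argument, and it matches the approach in the reference \cite[p.~127]{gia83} that the paper cites; the paper itself recalls the lemma without supplying a proof.
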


Now we prove the following
versions with increasing supports
 of Gehring lemma. Their proofs are based on classical arguments
 \cite{arklady,gia83,stre}.
\begin{proposition}\label{gia-geh}
Let $p>1$, $\delta>0$, and nonnegative functions
$\Phi\in L^p(\Omega)$ and $\Psi\in L^{p+\delta}(\Omega)$
satisfy the estimate
\begin{equation}\label{hisum}
\frac 1{R^n}\int_{Q_{\alpha R}(z)} \Phi^p\mathrm{dx}
\leq B
\Big(\frac 1{R^n}\int_{Q_R(z)} \Phi \mathrm{dx}\Big)^{p}
+\frac 1{R^n}\int_{Q_R(z)} \Psi^p\mathrm{dx},
\end{equation}
for all $z\in\Omega$,
 $R< \min\{\mbox{\rm dist}(z,\partial\Omega)/\sqrt{n},R_0\}$
with some constants $\alpha\in [1/2,1[$, $R_0>0$,  and  $B>0$.
Then,
$\Phi\in L^{q}_{\rm loc}(\Omega)$ for all $p\leq q\leq p+\delta$ and 
$q<p+(p-1)/(a_{\rm I}-1)$,  with
 \begin{equation}\label{defva}
 \varkappa = (8^n+1) 2^{3np}\left( B^{1/p}+1\right)^p
.\end{equation}
 In  particular, for any cubic interval
 $Q_r(x_0)\subset \subset\Omega$ such that $r<R_0$, we have
\begin{align}
\| \Phi\|_{q,\omega}^q
\leq \left[ 
 {\rm dist}(\omega,\partial Q_r(x_0))\right]^{-nq/p}
\left[ {\varkappa (q-1)r^{nq/p}\over q-1-\varkappa (q-p)}
\| \Psi\|_{q,Q_r(x_0)}^q+
\right.\nonumber\\
\left.+ { (p-1)r^n\over q-1-\varkappa (q-p)}
\left( \| \Phi\|_{p,Q_r(x_0)}^p+
\| \Psi\|_{p,Q_r(x_0)}^p\right)^{q/p}\right],
\label{higher}
\end{align}
for any measurable set $\omega\subset\subset Q_r(x_0)$.
\end{proposition}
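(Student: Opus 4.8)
\emph{Strategy.} The plan is to run the classical Gehring--Giaquinta self-improvement scheme with all constants made explicit. First I would convert the reverse H\"older inequality (\ref{hisum}) --- by means of the Calder\'on--Zygmund covering Lemma~\ref{caldz} --- into a good-$\lambda$ (level-set) inequality having exactly the shape of the hypothesis (\ref{hhq}) of the Stieltjes Lemma~\ref{hh}; that lemma then simultaneously boosts the integrability exponent from $p$ to $q$ and delivers the precise coefficients of (\ref{higher}).

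\emph{Step 1: reduction to a good-$\lambda$ inequality.} Fix the cubic interval $Q_r(x_0)\subset\subset\Omega$ with $r<R_0$ and a measurable set $\omega\subset\subset Q_r(x_0)$, and put $d=\mathrm{dist}(\omega,\partial Q_r(x_0))$. One may assume a priori that $\Phi\in L^q_{\rm loc}(\Omega)$; otherwise replace $\Phi$ by $\Phi_k=\min\{\Phi,k\}$, which still satisfies (\ref{hisum}) with the same $B$, run the argument, and let $k\to\infty$ by monotone convergence. For an auxiliary cube $\omega\subset Q\subset Q_r(x_0)$ and $t>0$, put
\[
h(t)=\int_{Q\cap\{\Phi>t\}}\Phi\,\mathrm{dx},\qquad H(t)=\int_{Q_r(x_0)\cap\{\Psi>t\}}\Psi^{p}\,\mathrm{dx};
\]
both are nonincreasing and tend to $0$ as $t\to\infty$ because $\Phi\in L^p(\Omega)$ and $\Psi\in L^{p+\delta}(\Omega)$. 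Fubini over the level sets gives, for $t_0>0$ and $s\ge0$,
\[
-\int_{t_0}^{\infty}\tau^{\,p-1+s}\,\mathrm{d}h(\tau)=\int_{Q\cap\{\Phi>t_0\}}\Phi^{\,p+s}\,\mathrm{dx},\qquad -\int_{t_0}^{\infty}\tau^{\,s}\,\mathrm{d}H(\tau)=\int_{Q_r(x_0)\cap\{\Psi>t_0\}}\Psi^{\,p+s}\,\mathrm{dx},
\]
in particular $-\int_t^\infty\tau^{p-1}\,\mathrm{d}h(\tau)=\int_{Q\cap\{\Phi>t\}}\Phi^p\,\mathrm{dx}$. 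Thus, once $h$ and $H$ are known to satisfy (\ref{hhq}) with the Lemma's ``$q$'' equal to $p-1$ and its ``$a$'' equal to the $\varkappa$ of (\ref{defva}), Lemma~\ref{hh} applied with $\gamma=q-1$ --- legitimate precisely when $\gamma\in[\,p-1,\varkappa(p-1)/(\varkappa-1)\,[$, that is $p\le q<p+(p-1)/(\varkappa-1)$, equivalently $q-1-\varkappa(q-p)>0$, the quantity appearing in the denominators of (\ref{higher}) --- returns
\[
\int_{Q\cap\{\Phi>t_0\}}\Phi^{q}\,\mathrm{dx}\le\frac{p-1}{\,q-1-\varkappa(q-p)\,}\int_{Q\cap\{\Phi>t_0\}}\Phi^{p}\,\mathrm{dx}+\frac{\varkappa(q-1)}{\,q-1-\varkappa(q-p)\,}\int_{Q_r(x_0)\cap\{\Psi>t_0\}}\Psi^{q}\,\mathrm{dx}.
\]
From here I would add $\int_\omega\Phi^q=\int_{\omega\cap\{\Phi\le t_0\}}\Phi^q+\int_{\omega\cap\{\Phi>t_0\}}\Phi^q\le t_0^{q}|\omega|+\int_{Q\cap\{\Phi>t_0\}}\Phi^q$, bound $\int_{Q\cap\{\Phi>t_0\}}\Phi^p\le\|\Phi\|_{p,Q_r(x_0)}^p$ and $\int_{Q_r(x_0)\cap\{\Psi>t_0\}}\Psi^q\le\|\Psi\|_{q,Q_r(x_0)}^q$, and choose $t_0$ of the size of an $L^p$-average of $\Phi$ on $Q_r(x_0)$ (modulated by $r$ and $d$); this produces (\ref{higher}), the powers $r^{n},r^{nq/p}$ and the factor $[\mathrm{dist}(\omega,\partial Q_r(x_0))]^{-nq/p}$ being the accumulated price of the $R^{-n}$-scaling of (\ref{hisum}) carried through the nested-cube iteration of Step 2.

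\emph{Step 2: the good-$\lambda$ inequality (\ref{hhq}).} This is the heart of the matter. Fix $t\ge t_0$, choosing $t_0$ large enough that $|Q_r(x_0)\cap\{\Phi>t_0\}|$ is small --- which guarantees that every cube built below is small enough for (\ref{hisum}) to apply and that its enlargement stays inside $\Omega$. I would apply Lemma~\ref{caldz} to $v=\Phi^{p}$ on a cube contained in $Q_r(x_0)$, with threshold $\zeta$ a fixed multiple of $t^{p}$, obtaining cubes $\{Q_j\}$ with pairwise disjoint interiors and sides parallel to the axes, with $\Phi^p\le\zeta$ a.e.\ off $\bigcup_j Q_j$ and $\zeta<\frac1{|Q_j|}\int_{Q_j}\Phi^p\,\mathrm{dx}\le2^{n}\zeta$. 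Writing $Q_j=Q_{\alpha R_j}(z_j)$ with $R_j=\rho_j/\alpha$, so $Q_j\subset Q_{R_j}(z_j)$ and $|Q_{R_j}(z_j)|\le2^{n}|Q_j|$ since $\alpha\in[1/2,1[$, I would invoke (\ref{hisum}) on each pair $(Q_{\alpha R_j}(z_j),Q_{R_j}(z_j))$; after normalising and splitting the right-hand integrals over $\{\Phi\le t\}$, $\{\Phi>t\}$ (resp.\ over $\{\Psi\le t\}$, $\{\Psi>t\}$), the ``$\Phi\le t$'' part gets absorbed into the left-hand side through $\zeta<\frac1{|Q_j|}\int_{Q_j}\Phi^p\,\mathrm{dx}$, leaving, with $C_n$ a fixed power of $2^n$,
\[
\int_{Q_j}\Phi^{p}\,\mathrm{dx}\le C_{n}B\,t^{p-1}\!\!\int_{Q_{R_j}(z_j)\cap\{\Phi>t\}}\!\!\Phi\,\mathrm{dx}+C_{n}\!\!\int_{Q_{R_j}(z_j)\cap\{\Psi>t\}}\!\!\Psi^{p}\,\mathrm{dx}.
\]
Summing over $j$, using $\{\Phi^p>\zeta\}\subset\bigcup_j Q_j$ a.e.\ and the purely dimensional bounded overlap of the enlarged cubes $\{Q_{R_j}(z_j)\}$ (once more a consequence of $\alpha\ge1/2$), absorbing the harmless slice $\{t<\Phi\le2t\}$ (whose $\Phi^p$-integral is $\le(2t)^{p-1}\int_{\{\Phi>t\}}\Phi$), and completing the support increase by a standard iteration over a geometric sequence of nested cubes interpolating between $\omega$ and $Q_r(x_0)$ (whose geometric sum is what feeds the $d$-dependent factor of Step 1), one arrives at (\ref{hhq}). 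Collecting every average introduced along the way --- the $2^n$ of Lemma~\ref{caldz}, the parent/child volume ratio $2^n$, the $R^{-n}$ of (\ref{hisum}), the overlap count, and the $(B^{1/p}+1)^{p}$ produced by placing the two right-hand terms under a common $p$-th power --- yields exactly $\varkappa=(8^{n}+1)2^{3np}(B^{1/p}+1)^{p}$, i.e.\ (\ref{defva}).

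\emph{Conclusion and main obstacle.} Feeding (\ref{hhq}) back into Step 1 then gives both $\Phi\in L^q_{\rm loc}(\Omega)$ and the estimate (\ref{higher}) for every $q$ with $p\le q\le p+\delta$ and $q-1-\varkappa(q-p)>0$, and letting $\Phi_k\uparrow\Phi$ removes the a priori truncation. The analytic content (Lemmata~\ref{hh} and~\ref{caldz}) is off the shelf; the real difficulty, and where I expect to spend the effort, is the constant bookkeeping: one must (i) fix the starting level $t_0$ large enough that every Calder\'on--Zygmund cube and its enlargement is admissible for (\ref{hisum}); (ii) check that the enlargements $Q_{R_j}(z_j)$, which \emph{do} overlap unlike the $Q_j$, carry a purely dimensional overlap constant; and (iii) track \emph{every} factor of $2^n$ and of $B$ through the covering, the summation, the support-increase iteration and the passage to (\ref{higher}), so that the constant emerges as the explicit $\varkappa$ of (\ref{defva}) --- and the dependence on $\mathrm{dist}(\omega,\partial Q_r(x_0))$ likewise explicit --- rather than as an anonymous $\varkappa(n,p,B)$ as in the contradiction-based arguments of \cite{cp,shen}.
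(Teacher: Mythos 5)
Your high-level architecture — Calder\'on--Zygmund decomposition, a good-$\lambda$ inequality, then the Stieltjes Lemma~\ref{hh} — matches the paper, but the central device that makes the \emph{increasing-supports} version work is absent. The paper first transfers $Q_r(x_0)$ to $Q=Q_{3/2}(0)$, normalises $\overline\Phi,\overline\Psi$ so that $\|\overline\Phi\|_{p,Q},\|\overline\Psi\|_{p,Q}\le1$, and then replaces them by the \emph{distance-weighted} functions $\Phi_0(y)=\overline\Phi(y)\,{\rm dist}^{n/p}(y,\partial Q)$, $\Psi_0(y)=\overline\Psi(y)\,{\rm dist}^{n/p}(y,\partial Q)$, taking $h(t)=\int_{Q[\Phi_0>t]}\Phi_0$ and $H(t)=\int_{Q[\Psi_0>t]}\Psi_0^p$. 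It then decomposes $Q$ into Whitney-type dyadic shells $C^{(k)}$, applies Lemma~\ref{caldz} on each shell cube $D^{(k)}_i$ at a \emph{uniform} scale (this uses the uniform bound $\frac1{|D^{(k)}_i|}\int_{D^{(k)}_i}\Phi_0^p\le2^{3n}$, which is exactly what the distance weight buys), and shows the enlargements $Q_R(y^{(k,j)})$ stay inside $Q$ because they live at a scale comparable to their distance from $\partial Q$. This is what makes the resulting good-$\lambda$ inequality~(\ref{phipsi}) live on the \emph{same} set $Q$ on both sides, so Lemma~\ref{hh} can be applied literally, and it is what directly produces the factor $[{\rm dist}(\omega,\partial Q_r(x_0))]^{-nq/p}$ in~(\ref{higher}) when you undo the weight. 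Your $h(t)=\int_{Q\cap\{\Phi>t\}}\Phi\,dx$ carries no such weight, so the enlarged cubes near $\partial Q$ \emph{do} escape $Q$; the inequality you would obtain is of increasing-supports type and Lemma~\ref{hh} as stated does not apply to it. You acknowledge this tacitly by invoking ``a standard iteration over a geometric sequence of nested cubes,'' but that is the Giaquinta hole-filling iteration, a genuinely different route: it would produce a different power of ${\rm dist}(\omega,\partial Q_r(x_0))$ and a different constant, and in any case you do not carry it out — yet the whole point of the Proposition is that $\varkappa$ and the $r$-, $d$-powers of~(\ref{higher}) are explicit.

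Two further concrete gaps. First, you assert a ``purely dimensional bounded overlap'' of the enlarged CZ cubes $\{Q_{R_j}(z_j)\}$ as a consequence of $\alpha\ge1/2$; disjointness of the CZ cubes does not by itself give bounded overlap of their dilates when the radii vary widely, and this is not what the paper does: it invokes the Vitali covering lemma to select a genuinely disjoint subfamily $\{Q_{R_i}(y^{(i)})\}$ with dilation factor $\sigma\in\,]3,4[$, and the resulting $\sigma^n<4^n$, the CZ $2^n$, and the threshold $\lambda=2^{3np}(B^{1/p}+1)^p$ are precisely how the constant $\varkappa=(8^n+1)2^{3np}(B^{1/p}+1)^p$ of~(\ref{defva}) is assembled. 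Second, you leave the derivation of $\lambda$ and $\varkappa$ at the level of ``collecting every average introduced along the way,'' but the specific Young-inequality and $2^{n/p}+1/p'\le2^n$ manipulations leading from~(\ref{rr}) to~(\ref{y0}) are what pin the coefficient of $(B^{1/p}+1)^p$; without reproducing them (or something equivalent) there is no reason the bookkeeping should land on~(\ref{defva}) and~(\ref{higher}) rather than on some other admissible constant. In short: the skeleton is right, but the distance-weight/Whitney construction is the missing idea, and the Vitali step and the explicit $\lambda$--$\varkappa$ computation are the missing work.
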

\begin{proof}
Fix $Q_r(x_0)\subset\subset\Omega$ with $r<R_0$.
Let us transform the cubic interval
 $Q_r(x_0)$ into $Q=Q_{3/2}(0)$ by the passage to
new coordinates system $y=3(x-x_0)/(2r)$. 
Setting $M=[3/(2r)]^{n/p}(\|\Phi\|_{p,Q_r(x_0)}^p+\|\Psi\|_{p,Q_r(x_0)}^p)^{1/p}$, 
the normalized functions
$\overline\Phi(y)=\Phi(x_0+2ry/3)/M$ and $\overline\Psi(y)=\Psi(x_0+2ry/3)/M$ 
satisfy $\max\{\|\overline\Phi\|_{p,Q},\|\overline\Psi\|_{p,Q}\}\leq 1$.
Let us define  $\Phi_0(y)= 
\overline\Phi(y){\rm dist}^{n/p}(y,\partial Q)$, and 
$\Psi_0 (y)= \overline\Psi(y){\rm dist}^{n/p}(y,\partial Q)$.

For each $t\in [1,\infty[$, we introduce
\begin{align*}
h(t)=\int_{Q[\Phi_0 >t]}\Phi_0(y)\mathrm{dy},\\
H(t)=\int_{Q[\Psi_0 >t]}\Psi_0^p(y)\mathrm{dy}.
\end{align*}
Then,
 $h,H:[1,\infty[\rightarrow [0,\infty[$ are nonincreasing functions such that
verify (\ref{lim0}). In order to apply Lemma \ref{hh}, it remains to prove that
(\ref{hhq}) is verified with $q=p-1$,  
taking the relation 
\[
\int_{Q[\Phi_0>t]}\Phi_0^p (y)\mathrm{dy}=-\int_t^\infty\tau^{p-1}
\mathrm{dh(\tau)},\quad\forall p>1,
\]
into account. More exactly,  we must prove that
\begin{equation}\label{phipsi}
\int_{Q[\Phi_0 >t]}\Phi_0^p(y)\mathrm{dy}\leq \varkappa  \left(t^{p-1}
\int_{Q[\Phi_0 >t]}\Phi_0(y)\mathrm{dy}+
\int_{Q[\Psi_0 >t]}\Psi_0^p(y)\mathrm{dy}\right),
\end{equation}
for any $t\geq 1$.

We decompose $Q=\cup_{k\in\mathbb{N}_0}C^{(k)}$,
where  $C^{(0)}=Q_{1/2}(0)$, and for each $k\geq 1$,
$C^{(k)}=\{y\in Q:\ 2^{-k}<{\rm dist}(y,\partial Q)\leq 2^{-k+1}\}$.
Each set $C^{(k)}$ is the finite union of disjoint cubic intervals 
of size $1/2^{k+2}$, namely $D^{(k)}_i=
Q_{1/2^{k+3}}(w^{(i)})$.  In particular, $|D^{(k)}_i|=2^{-(k+2)n}$.

Fix $t \geq 1$.
Since we  have
\[
{1\over |D^{(k)}_i|}\int_{D^{(k)}_i}
\Phi_0^p(y)\mathrm{dy}\leq 2^{3n}\|\overline\Phi\|_{p,Q}^p
\leq 2^{3n},
\]
from Lemma \ref{caldz} with $v=\Phi^p_0\in L^1(D^{(k)}_i)$
 and $\zeta=t^p{\lambda}$ with $\lambda>2^{3n}$ defined in (\ref{lambd}),
 there exists a disjoint sequence of
cubic intervals $Q^{(k)}_{i,j}\subset D^{(k)}_i$
in the conditions of Lemma. 
Since $i\in\{1,\cdots,I\}$ with $I\in\mathbb{N}$, we in fact have
a disjoint sequence of 
cubic intervals $Q^{(k)}_{j}=Q_{r_j^{(k)}}(y^{(k,j)})\subset C^{(k)}$
 such that $\Phi_0\leq t\sqrt[p]{\lambda}$ a.e. in 
$C^{(k)}\setminus \left(\cup_{j\geq 1} Q^{(k)}_j\right):=E$, and
\begin{equation}
t^p\lambda<
{1\over |Q^{(k)}_j|}\int_{Q^{(k)}_j}\Phi_0^p(y)\mathrm{dy}\leq 2^n
t^p\lambda,
\quad \forall j\geq 1.\label{sigma}
\end{equation} 

Considering that $|E[\Phi_0>t\sqrt[p]{\lambda}]|=0$, we compute
\begin{equation}\label{sum}
\int_{Q[\Phi_0 >t\sqrt[p]{\lambda}]}\Phi_0^p\mathrm{dy}\leq
\sum_{k\geq 0}
\sum_{j\geq 1}\int_{Q^{(k)}_j}\Phi_0^p\mathrm{dy}\leq 2^n t^p\lambda
\sum_{k\geq 0}\sum_{j\geq 1}|Q^{(k)}_j|.
\end{equation}

Next, to estimate the above right hand side,
let us prove, for all $k\geq 0$, and $j\geq 1$, there exists
 $R=R_{kj}\in ]r^{(k)}_j,2r_j^{(k)}]$ that verifies
\begin{equation}\label{y0}
t(2R)^n< \int_{Q_R[\Phi_0>t]}\Phi_0\mathrm{dy}+t^{-p+1}
\int_{Q_R [\Psi_0>t]}\Psi_0^p\mathrm{dy},
\end{equation}
with the notation $Q_R= Q_{R}(y^{(k,j)})$.
Since  $R\leq 2r_j^{(k)}<2^{-(k+1)}$, 
$Q_{R}$ only intersects the sets $C^{(k-1)}$, $C^{(k)}$,
and $C^{(k+1)}$.

Fix $Q^{(k)}_j\subset \subset C^{(k)}$  such that
$r^{(k)}_j< 2^{-(k+3)}$.
Let us choose $R\in ]r^{(k)}_j, 2r^{(k)}_j]$ provided by
 $\alpha= r^{(k)}_j/ R \in [1/2,1[$.
 We use the first inequality in (\ref{sigma}), obtaining
\[
t^p\lambda<
{1\over |Q^{(k)}_j|}\int_{Q^{(k)}_j}\Phi_0^p\mathrm{dy}\leq
{1\over R^n}\int_{Q_{\alpha R}}\Phi_0^p\mathrm{dy}\leq
{2^{-(k-1)n}\over R^n}\int_{Q_{\alpha R}}\overline\Phi^p\mathrm{dy},
\] 
with $ Q_{\alpha R}= Q_{\alpha R}(y^{(k,j)})\subset C^{(k)}$.

Rewriting (\ref{hisum})  in terms of the new coordinates system,
taking $z=x_0+2ry^{(k,j)}/3$, and dividing the resultant inequality by $\|\Phi\|_{p,Q_r(x_0)}^{p}+\|\Psi\|_{p,Q_r(x_0)}^{p}$, we deduce
\[{1\over R^n}
\int_{Q_{\alpha R}}\overline\Phi^p\mathrm{dy}\leq
B \left({1\over R^n}\int_{Q_{R}}\overline\Phi\mathrm{dy}
\right)^p+{1\over R^n}
\int_{Q_{R}}\overline\Psi^p\mathrm{dy},
\]
considering that $r<R_0$.

Then, gathering the above two inequalities, we find
\begin{equation}\label{rr}
(tR^n)^p\lambda< 2^{-(k-1)n}
\left[B\left(\int_{Q_{R}}\overline\Phi\mathrm{dy}
\right)^p+R^{n(p-1)}
\int_{Q_{R}}\overline\Psi^p\mathrm{dy}\right].
\end{equation}

On one hand,  we have
\[
(tR^n)^p\lambda< 2^{n}\left[B
\left(\int_{Q_{R}}
\Phi_0\mathrm{dy}\right)^p+R^{n(p-1)}
\int_{Q_{R}}\Psi_0^p\mathrm{dy}\right],
\]
obtaining
\begin{align*}
tR^n\sqrt[p]{\lambda}< 2^{n/p}\left[
B^{1/p}\left(\int_{Q_R [\Phi_0>t]}
\Phi_0\mathrm{dy}+t(2R)^n\right)+
\right.\\+\left.
R^{n(p-1)/p}\left(\int_{Q_R [\Psi_0>t]}
\Psi_0^p\mathrm{dy}\right)^{1/p}+t2^{n/p}R^n\right].
\end{align*}
By applying the Young inequality,
\[
R^{n(p-1)/ p}\left(\int_{Q_R [\Psi_0>t]}
\Psi_0^p\mathrm{dy}\right)^{1/p}\leq
{t\over p'}R^{n}+
{t^{-(p-1)}\over p}\int_{Q_{R} [\Psi_0>t]}
\Psi_0^p\mathrm{dy},
\]
and taking $p>1$ and $2^{n/p}+1/p'\leq 2^n$ into account, we find
\begin{align*}
tR^n\left[\sqrt[p]{\lambda}- 2^{n+n/p}\left(
B^{1/p}
+1\right)\right]<\\ < 2^{n/p}\left[
B^{1/p}\int_{Q_R[\Phi_0>t]}
\Phi_0\mathrm{dy}+t^{-p+1}
\int_{Q_{R} [\Psi_0>t]}\Psi_0^p\mathrm{dy}\right].
\end{align*}
Therefore,  we choose
\begin{equation}\label{lambd}
\lambda= 2^{3np}\left( B^{1/p}+1\right)^p, 
\end{equation}
 concluding (\ref{y0}).

On the other hand, applying the H\"older inequality in (\ref{rr}), and using  $\max\{\|\overline\Phi\|_{p,Q},$ $\|\overline\Psi\|_{p,Q}\}\leq 1$,
 we have
\[
tR^n< \lambda^{-1/p}
\left(B^{1/p}+1\right) R^{n(p-1)/p},
\]
and consequently, using (\ref{lambd}), we conclude
\[
tR^{n/p}<2^{-3n}.
\]

According to the Vitali covering lemma, there exist
$\sigma\in ]3,4[$ and a sequence of disjoint cubic intervals
$\{Q_{R_i}(y^{(i)})\}_{i\geq 1}$ from the collection
$\{Q_{R}(y^{(k,j)})\}_{k\geq 0,\, j\geq 1}$  such that
\[
\cup_{k\geq 0}\cup_{j\geq 1}Q_R(y^{(k,j)})\subset
\cup_{ i\geq 1}Q_{\sigma R_i}(y^{(i)})
\subset Q.\]
Hence,\[
\sum_{k\geq 0}\sum_{j\geq 1}|Q^{(k)}_j|\leq
\sum_{k\geq 0}\sum_{j\geq 1}
|Q_R(y^{(k,j)})|\leq\sigma^n \sum_{i\geq 1}|Q_{R_i}(y^{(i)})|.
\]
Combining the above with (\ref{sum}), and (\ref{y0}), we find
\begin{equation}\label{sums}
\int_{Q[\Phi_0 >t\sqrt[p]{\lambda}]}\Phi_0^p\mathrm{dy}\leq
 2^n \lambda\sigma^n \left(t^{p-1}\int_{Q[\Phi_0>t]}
\Phi_0\mathrm{dy}+
\int_{Q[\Psi_0>t]}\Psi_0^p\mathrm{dy}\right).
\end{equation}

Now, observing that
\[
\int_{Q[\Phi_0>t]}\Phi_0^p\mathrm{dy}\leq
\int_{Q[\Phi_0>t\sqrt[p]{\lambda}]}\Phi_0^p\mathrm{dy}+t^{p-1}\lambda
\int_{Q[\Phi_0>t]}\Phi_0\mathrm{dy},\]
 (\ref{sums}) implies (\ref{phipsi}).
Therefore, Lemma \ref{hh} can be applied, concluding that, for any $\gamma$
such that $p\leq \gamma+1<p+{(p-1)/( \varkappa -1)}$, (\ref{hhtese}) 
implies
\begin{align*}
\int_{Q[\Phi _0>1]}\Phi_0^{\gamma+1}\mathrm{dy}\leq
{p-1\over \varkappa (p-1)-(\varkappa -1)\gamma} 
\int_{Q[\Phi_0>1]}\Phi_0^p\mathrm{dy}+\\
+{\varkappa \gamma\over \varkappa (p-1)-(\varkappa -1)\gamma}
\int_{Q[\Phi_0>1]}
\Psi_0^{\gamma+1}\mathrm{dy}.
\end{align*}
The requirement of $q=\gamma +1<p+\delta$ assures the finiteness of the last
integral of the RHS of the above inequality.
Since $\Phi_0^{\gamma+1}\leq\Phi_0^{p}$ a.e. in $Q\setminus Q[\Phi_0>1]$,
for any $\omega\subset\subset Q$, we find
\begin{align*}
\left[{3{\rm dist}(\omega,\partial Q_r(x_0))\over 2r}\right]^{nq/ p}
\int_{\omega}\overline\Phi^{q}\mathrm{dy}\leq 
{p-1\over q-1-\varkappa (q-1)} \left({3\over 2}\right)^n
\int_{Q}\overline\Phi^p\mathrm{dy}+\\
+{\varkappa (q-1)\over q-1-\varkappa (q-1)}
\left({3\over 2}\right)^{nq/p}
\int_{Q}\overline\Psi^{q}\mathrm{dy},
\end{align*}
 keeping the same designation to the transformed
set $\omega\subset \subset  Q_r(x_0)$.
Passing the above inequality to the initial coordinates system,
 we conclude (\ref{higher}).
\end{proof}

\begin{corollary}\label{high}
Let $x_0\in\Omega$ and $R_0>0$ such that $\sqrt{n}R_0<
{\rm dist}(x_0,\partial\Omega)$.
Let $p>1$, $\delta>0$ and nonnegative functions
$\Phi\in L^p(\Omega)$ and $\Psi\in L^{p+\delta}(\Omega)$
satisfy the estimate
\begin{equation}\label{hisum2}
\frac 1{R^n}\int_{Q_{\alpha R}(z)} \Phi^p\mathrm{dx}
\leq B
\Big(\frac 1{R^n}\int_{Q_R(z)} \Phi^t \mathrm{dx}\Big)^{p/t}
+\frac 1{R^n}\int_{Q_R(z)} \Psi^p\mathrm{dx}, 
\end{equation}
for all  $z\in Q_{R_0}(x_0)$ and $\sqrt{n}R<
{\rm dist}(z,\partial  Q_{R_0}(x_0))$,
with some constants $\alpha\in [1/2,1[$,
 $t<p$,   and  $B>0$.
Then $\Phi\in L^{q}_{\rm loc}( Q_{R_0}(x_0))$ for all $q
\in [p,p+\delta]\cap [p,p+(p-t)/(\varkappa -1)[$, with $\varkappa $ being
defined by (\ref{defva}).
 In  particular, it verifies
\begin{align}
\| \Phi\|_{q,Q_{r/2}(x_0)}^q
\leq 
{2^{nq/p}\over q-t-\varkappa (q-p)}\left[ 
(p-t)r^{-n(q-p)/p}2^{(q-p)/p}\| \Phi\|_{p, Q_{r}(x_0)}^{q}
+\right.\nonumber\\
\left.+
\left(2^{(n+1)(q-p)/p}(p-t)+\varkappa 
(q-t)\right)\| \Psi\|_{q, Q_{r}(x_0)}^q
\right],\qquad
\label{highert}
\end{align}
for any $0<r< R_0$.
\end{corollary}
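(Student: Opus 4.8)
The plan is to reduce Corollary \ref{high} to Proposition \ref{gia-geh} by working inside the fixed interior cube $Q_{R_0}(x_0)$ and absorbing the extra exponent $t<p$ through a standard interpolation trick. First I would note that the hypothesis (\ref{hisum2}) differs from (\ref{hisum}) only in that the first term on the right carries $\Phi^t$ raised to the power $p/t$ rather than $\Phi$ raised to the power $p$; so the task is to convert the $L^t$-average into an $L^1$-average at the cost of a multiplicative constant depending on a sup-norm that will later be normalised away. The natural device is the elementary interpolation inequality
\[
\Big(\frac 1{R^n}\int_{Q_R} \Phi^t\mathrm{dx}\Big)^{1/t}
\leq \Big(\sup_{Q_R}\Phi\Big)^{1-1/s}\Big(\frac 1{R^n}\int_{Q_R}\Phi\,\mathrm{dx}\Big)^{1/s}
\]
for a suitable exponent, but since we do not want an $L^\infty$ bound I would instead follow the classical Giaquinta--Modica route: split $\Phi=\Phi\chi_{[\Phi>t_0]}+\Phi\chi_{[\Phi\le t_0]}$ at the threshold level already present in the Stieltjes machinery, so that on the bad set $\Phi^t\le \Phi^p t_0^{t-p}$ and on the good set $\Phi^t\le t_0^t$; this lets one re-derive an inequality of exactly the form (\ref{phipsi}) with $q=p-1$, only with the constant $\varkappa$ rescaled. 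In other words, I would repeat the Calderon--Zygmund stopping-time argument of Proposition \ref{gia-geh} verbatim, the sole modification being that at the step where (\ref{hisum}) is invoked one uses (\ref{hisum2}) together with the pointwise bound on the good/bad decomposition, which reproduces (\ref{y0}) with the same $\lambda$ up to the universal factor folded into (\ref{defva}).

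The key steps in order: (i) fix $0<r<R_0$, pass to the normalised cube $Q=Q_{3/2}(0)$ exactly as in the proof of Proposition \ref{gia-geh}, introducing $\overline\Phi,\overline\Psi$ with $\max\{\|\overline\Phi\|_{p,Q},\|\overline\Psi\|_{p,Q}\}\le 1$ and the weighted functions $\Phi_0,\Psi_0$; (ii) run the same dyadic decomposition $Q=\cup_k C^{(k)}$ and the Calderon--Zygmund subdivision (Lemma \ref{caldz}) at level $\zeta=t^p\lambda$; (iii) at the point where one estimates $\frac 1{R^n}\int_{Q_{\alpha R}}\overline\Phi^p$ from above, invoke (\ref{hisum2}) instead of (\ref{hisum}), and control $\big(\frac 1{R^n}\int_{Q_R}\overline\Phi^t\big)^{p/t}$ by $\big(\frac 1{R^n}\int_{Q_R}\overline\Phi\,\big)^{p}$ using Hölder together with $\|\overline\Phi\|_{p,Q}\le 1$ — more precisely $\int_{Q_R}\overline\Phi^t\le (\int_{Q_R}\overline\Phi^p)^{t/p}|Q_R|^{1-t/p}\le (\int_{Q_R}\overline\Phi)^{?}\dots$, and here I would actually use that, after the stopping-time selection, on the selected cube $\frac 1{|Q^{(k)}_j|}\int\Phi_0^p$ exceeds $t^p\lambda$, which forces the sub-level structure that lets the $L^t$-average be reabsorbed; (iv) having thereby re-established the inequality corresponding to (\ref{phipsi}) — now with gap exponent $p-t$ in place of $p-1$ — apply Lemma \ref{hh} with $q$ replaced so that the admissible range becomes $q\in[p,p+(p-t)/(\varkappa-1)[$, the upper cut-off $q\le p+\delta$ again guaranteeing finiteness of the $\Psi_0^{q}$ integral; (v) undo the normalisation, choosing the measurable set $\omega=Q_{r/2}(x_0)$ so that $\mathrm{dist}(\omega,\partial Q_r(x_0))=r/2$ and the factor $[\,\mathrm{dist}/r\,]^{-nq/p}=2^{nq/p}$ appears, arriving at (\ref{highert}).

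The main obstacle I anticipate is step (iii): converting the $L^t$-average on the right of (\ref{hisum2}) into the $L^1$-average that Gehring's lemma and Lemma \ref{hh} require, while keeping the constant explicit and independent of any $L^\infty$ norm of $\Phi$. The clean resolution is not a crude interpolation (which would reintroduce a sup-norm) but the observation that within the stopping-time argument one always works at a fixed level $t$ and on cubes where $(\Phi_0^p)_{Q^{(k)}_j}>t^p\lambda$; combined with Lemma \ref{caldz} applied to $v=\Phi_0^t$ rather than $v=\Phi_0^p$, one gets the same dichotomy ``$\Phi_0\le$ level a.e.\ outside $\cup Q_j$'' and the reverse bound on each $Q_j$, after which the passage from $t$-averages to $1$-averages is exactly the Young-inequality step already performed in Proposition \ref{gia-geh}. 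Everything else — the Vitali covering with $\sigma\in\,]3,4[$, the summation over the dyadic shells, the final application of Lemma \ref{hh} — transcribes unchanged, so the proof is essentially a careful bookkeeping exercise tracking how $p-1$ is replaced by $p-t$ throughout; I would present it as such, referring back to the proof of Proposition \ref{gia-geh} for the parts that are identical and spelling out only the modified step and the resulting constant.
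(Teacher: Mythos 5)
The paper's proof is a one-line reduction that you miss: since $t<p$ makes $p/t>1$, rewrite (\ref{hisum2}) as a reverse H\"older inequality for the pair $(\Phi^t,\Psi^t)$ with exponent $p/t$,
\begin{equation*}
\frac 1{R^n}\int_{Q_{\alpha R}(z)} (\Phi^{t} )^{p/t}\mathrm{dx}
\leq B
\Big(\frac 1{R^n}\int_{Q_R(z)} \Phi^t \mathrm{dx}\Big)^{p/t}
+\frac 1{R^n}\int_{Q_R(z)}( \Psi^t)^{p/t}\mathrm{dx},
\end{equation*}
so that Proposition \ref{gia-geh} applies \emph{verbatim} to the relabelled data; undoing the substitution $\Phi\mapsto\Phi^t$, $\Psi\mapsto\Psi^t$, $p\mapsto p/t$, followed by the single H\"older step $\| \Psi\|_{p, Q_{r}(x_0)}^{p}\leq(2r)^{n(q-p)/q}\| \Psi\|_{q, Q_{r}(x_0)}^{p}$ and the elementary bound $(a^p+b^p)^{q/p}\le 2^{q/p-1}(a^q+b^q)$, yields (\ref{highert}) directly.

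Your proposal instead tries to re-derive the distributional inequality (\ref{phipsi}) from scratch by modifying the stopping-time argument, and step (iii) is where it breaks down --- you flag the difficulty yourself by leaving the exponent undetermined in the expression you write for the bound on $\int_{Q_R}\overline\Phi$. The conversion you want, from the $L^t$-average of $\overline\Phi$ to its $L^1$-average, runs the wrong way whenever $t>1$ (Jensen gives $(\frac1{|Q|}\int\overline\Phi^t)^{1/t}\ge\frac1{|Q|}\int\overline\Phi$), so neither H\"older against $\|\overline\Phi\|_{p,Q}\le 1$ nor the good/bad splitting you sketch can deliver it in general. You also invoke two incompatible decompositions (Lemma \ref{caldz} applied to $v=\Phi_0^p$ at level $t^p\lambda$ in step (ii), but to $v=\Phi_0^t$ in your ``resolution'' paragraph) without reconciling them. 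The point you are missing is that no conversion to $L^1$-averages of $\Phi$ is needed at all: the Stieltjes function $h$ in Lemma \ref{hh} and the right side of (\ref{hisum}) only ever use $L^1$-averages of the relabelled function, i.e.\ of $\Phi^t$, and that is exactly what (\ref{hisum2}) already supplies. Your observation that $p-1$ and $q-1$ should become $p-t$ and $q-t$ in the final estimate is correct, but it falls out of the substitution automatically (from $(p/t)-1=(p-t)/t$ and cancellation of $t$ in the relevant ratios) rather than from the re-derivation you attempt.
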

\begin{proof}
We write (\ref{hisum2}) as
\[
\frac 1{R^n}\int_{Q_{\alpha R}(z)} (\Phi^{t} )^{p/t}\mathrm{dx}
\leq B
\Big(\frac 1{R^n}\int_{Q_R(z)} \Phi^t \mathrm{dx}\Big)^{p/t}
+\frac 1{R^n}\int_{Q_R(z)}( \Psi^t)^{p/t}\mathrm{dx}, 
\]
and we apply Proposition \ref{gia-geh}
with $\omega=Q_{r/2}(x_0)$. Next, we use the H\"older inequality to get
\[\| \Psi\|_{p, Q_{r}(x_0)}^{p}\leq
(2r)^{n(q-p)/q}\| \Psi\|_{q, Q_{r}(x_0)}^{p}.\]
Then we apply the relation $(a^p+b^p)^{q/p}\leq 2^{q/p-1}(a^q+b^q)$
to conclude (\ref{highert}).
\end{proof}

Next,
the  higher summability of $\Phi$ near the flattened boundary is established
due to  the reverse H\"older inequality with a surface integral.
\begin{proposition}\label{surf}
Let $z_0=(z_0',0)\in\mathbb {R}^n$, $R_0>0$, and
\begin{align*}
Q^+_{R_0}(z_0):= \{z\in \mathbb R^n: |z'-z_0'|<R_0, \ z_n>0\};\\
\Sigma_{R_0}(z_0)=\{z\in \mathbb R^n:\ |z'-z_0'|<R_0, \ z_n=0\};\\
\partial'\Sigma_{R_0}(z_0)=\{z\in \mathbb R^n:\ |z'-z_0'|=R_0, \ z_n=0\},
\end{align*}
 where $|z'|=\max_{1\leq i\leq n-1}|z_i|$.
For $p>1$ and $\delta>0$, 
we suppose that the nonnegative functions
$\Phi\in L^p(Q^+_{R_0}(z_0))$, $\Psi\in L^{p+\delta}(Q^+_{R_0}(z_0))$, 
 and $\varphi\in L^{p+\delta}(\Sigma_{R_0}(z_0))$
satisfy the estimate
\begin{align}
\frac 1{R^n}\int_{Q^+_{\alpha R}(z)} \Phi^p\mathrm{dx}
\leq B
\Big(\frac 1{R^n}\int_{Q^+_R(z)} \Phi \mathrm{dx}\Big)^{p}
+\frac 1{R^n}\int_{Q^+_R(z)} \Psi^p\mathrm{dx}+\nonumber
\\
+\frac 1{R^{n-1}}\int_{\Sigma_R(z)} \varphi^p\mathrm{ds}, \label{hisums}
\end{align}
for all $z\in\Sigma_{R_0}(z_0)$,
 and all $R<\min\{R_0,{\rm dist}(z,\partial'\Sigma_{R_0}(z_0))\}$,
with some constants $\alpha\in [1/2,1[$,   and  $B>0$.
 Then, $\Phi\in  L^{p+\varepsilon}(\omega\cap Q_{R_0}^+(z_0))$,
for all $\varepsilon\in [0,\delta]\cap [0,(p-1)/(\varkappa -1)[$ and
measurable set $\omega\subset \subset Q_{R_0}(z_0)$,
and it verifies
\begin{align}
\| \Phi\|_{p+\varepsilon,\omega\cap Q_{R_0}^+(z_0)}^{p+\varepsilon}
\leq {
\left[{\rm dist}(\omega,\partial Q_{R_0}(z_0))\right]^{-n(1+\varepsilon/p)}\over p-1-(\varkappa -1)\varepsilon}
\times\nonumber\\
\times\left[
{(p-1)  R_0^n\over 2}
\left(2\| \Phi\|_{p, Q_{R_0}^+(z_0)}^p+ 2\| \Psi\|_{p, Q_{R_0}^+(z_0)}^p+
{2R_0\over 3} \| \varphi\|_{p, \Sigma_{R_0}(z_0)}^p\right)^{1+\varepsilon/p}
+\right.\nonumber\\
\left.+ \varkappa (p-1+\varepsilon) R_0^{n(1+\varepsilon/p)}
\left(
 \| \Psi\|_{p+\varepsilon, Q_{R_0}^+(z_0)}^{p+\varepsilon}+
\| \varphi\|_{p+\varepsilon, \Sigma_{R_0}(z_0)}^{p+\varepsilon}\right)
\right],
\qquad\label{highup}
\end{align}
where $\varkappa $ is given by (\ref{defva}).
\end{proposition}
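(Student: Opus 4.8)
The plan is to mimic the interior argument of Proposition \ref{gia-geh}, but now the Calderon--Zygmund covering must be carried out over cubes centred at boundary points of $\Sigma_{R_0}(z_0)$, so that the half-cube reverse H\"older inequality (\ref{hisums}) with its extra surface term can be invoked. First I would reduce to a normalised configuration: rescale $Q_{R_0}^+(z_0)$ to the half-cube $Q^+=Q_{3/2}^+(0)$ via $y=3(z-z_0)/(2R_0)$, and normalise $\Phi,\Psi,\varphi$ by the constant
\[
M=\Big(\tfrac{3}{2R_0}\Big)^{n/p}\Big(2\|\Phi\|_{p,Q_{R_0}^+}^p+2\|\Psi\|_{p,Q_{R_0}^+}^p+\tfrac{2R_0}{3}\|\varphi\|_{p,\Sigma_{R_0}}^p\Big)^{1/p}
\]
so that the normalised $\overline\Phi,\overline\Psi$ have $L^p(Q^+)$-norm $\le 1$ and the normalised $\overline\varphi$ has $R_0^{1/p}\|\overline\varphi\|_{p,\Sigma}\lesssim 1$. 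Then I introduce the distance-weighted functions $\Phi_0(y)=\overline\Phi(y)\,\mathrm{dist}^{n/p}(y,\partial Q_{3/2}(0))$, and likewise $\Psi_0$ and a surface analogue $\varphi_0$ on $\Sigma$, decompose $Q^+$ into the dyadic annuli $C^{(k)}$ accumulating at $\partial Q$, and run Lemma \ref{caldz} on the sub-cubes $D_i^{(k)}$.

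The heart of the matter is, as in Proposition \ref{gia-geh}, to establish the level-set inequality
\[
\int_{Q^+[\Phi_0>t]}\Phi_0^p\,\mathrm{dy}\le \varkappa\Big(t^{p-1}\int_{Q^+[\Phi_0>t]}\Phi_0\,\mathrm{dy}+\int_{Q^+[\Psi_0>t]}\Psi_0^p\,\mathrm{dy}+\int_{\Sigma[\varphi_0>t]}\varphi_0^p\,\mathrm{ds}\Big)
\]
for every $t\ge 1$, with the same $\varkappa=(8^n+1)2^{3np}(B^{1/p}+1)^p$ from (\ref{defva}). The argument proceeds exactly by the stopping-time/Vitali scheme of Proposition \ref{gia-geh}: for a selected stopping cube $Q_j^{(k)}$ I choose $R\in\,]r_j^{(k)},2r_j^{(k)}]$ with $\alpha=r_j^{(k)}/R\in[1/2,1[$, apply (\ref{hisums}) at the centre $y^{(k,j)}$ (which, being a stopping cube accumulating at $\partial Q$, has a nearby reflection point on $\Sigma$ — here the half-cube and surface terms enter), pass from $\overline\Phi$ to $\Phi_0$ by controlling the distance factor on $Q_R$ by $2^{-(k-1)n/p}$, split the integrals over $Q_R$ into the part where $\Phi_0>t$ plus a remainder bounded by $t(2R)^n$, and apply Young's inequality to the $\Psi_0$ and $\varphi_0$ contributions. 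The choice $\lambda=2^{3np}(B^{1/p}+1)^p$ in Lemma \ref{caldz} absorbs the constant $2^{n/p}(B^{1/p}+1+\cdots)$ exactly as before, yielding the analogue of (\ref{y0}); summing over the Vitali-extracted disjoint family and using $\sigma\in\,]3,4[$ gives the displayed level-set inequality.

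With that inequality in hand, set $h(t)=\int_{Q^+[\Phi_0>t]}\Phi_0\,\mathrm{dy}$ and $H(t)=\int_{Q^+[\Psi_0>t]}\Psi_0^p\,\mathrm{dy}+\int_{\Sigma[\varphi_0>t]}\varphi_0^p\,\mathrm{ds}$; these are nonincreasing, vanish at infinity, and satisfy (\ref{hhq}) with $q=p-1$ and $a=\varkappa$, so Lemma \ref{hh} applies for $\gamma\in[p-1,\varkappa(p-1)/(\varkappa-1)[$, i.e.\ $q=\gamma+1\in[p,p+(p-1)/(\varkappa-1)[$. Integrating (\ref{hhtese}), adding back the contribution of $\{\Phi_0\le 1\}$ via $\Phi_0^{q}\le\Phi_0^p$ there, restricting to $\omega\subset\subset Q_{R_0}(z_0)$ where $\mathrm{dist}(y,\partial Q)$ is bounded below by $\mathrm{dist}(\omega,\partial Q_{R_0}(z_0))\cdot 3/(2R_0)$, using the finiteness guaranteed by $q\le p+\delta$, and finally undoing the rescaling and the normalisation by $M$, produces (\ref{highup}) with the constants $Z$-type coefficients displayed; the factors $2^{n(1+\varepsilon/p)}$ and $R_0^{n(1+\varepsilon/p)}$ come from the Jacobian of the scaling and the definition of $M$, and the split $2\|\Phi\|^p+2\|\Psi\|^p+\tfrac{2R_0}{3}\|\varphi\|^p$ is precisely $(M\,(2R_0/3)^{n/p})^{p}\cdot(3/2)^n$ up to the normalising power.

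\textbf{Main obstacle.} The one genuinely new point compared with Proposition \ref{gia-geh} is the bookkeeping of the surface term: one must ensure that whenever a stopping cube $Q_j^{(k)}$ sits close enough to $\partial Q_{3/2}(0)$ for the distance weight to be comparable across $Q_R(y^{(k,j)})$, the enlarged cube $Q_R(y^{(k,j)})$ (with $R\le 2r_j^{(k)}$) still lies inside the region where (\ref{hisums}) is valid, i.e.\ $R<\mathrm{dist}(z,\partial'\Sigma_{R_0})$ after translating the centre to the corresponding point of $\Sigma$; this is what forces the restriction to $\omega\subset\subset Q_{R_0}(z_0)$ and dictates the appearance of $\mathrm{dist}(\omega,\partial Q_{R_0}(z_0))^{-n(1+\varepsilon/p)}$ in (\ref{highup}). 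Controlling the surface integrals $\int_{\Sigma_R(z)}\varphi^p\,\mathrm{ds}$ uniformly — dividing (\ref{hisums}) by $M^p$ and tracking the extra power of $R$ in the surface normalisation — is the principal technical chore, but it follows the same Young-inequality pattern already used for $\Psi$ and introduces no new idea beyond careful constant-chasing.
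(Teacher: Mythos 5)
Your proposal follows essentially the same strategy as the paper — Calder\'on--Zygmund stopping time on dyadic annuli, Lemma~\ref{hh} applied to distribution functions with $q=p-1$ and $a=\varkappa$, Vitali extraction, and an unwinding of the rescaling — and your final normalising constant $M$ (with the factors $2\|\Phi\|^p$, $2\|\Psi\|^p$, $\tfrac{2R_0}{3}\|\varphi\|^p$) agrees exactly with the paper's. However, there is one genuine gap in what you have written: you nowhere perform the \emph{even extension} of $\Phi$ and $\Psi$ across $\Sigma_{R_0}(z_0)$, which the paper does as its very first step, and everything downstream depends on it.

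Working directly in the half-cube $Q^+_{3/2}(0)$ runs into two concrete obstacles that the reflection is designed to remove. First, Lemma~\ref{caldz} is a statement about open cubes; near $\Sigma$ your dyadic sub-cubes $D^{(k)}_i$ would either straddle $\Sigma$ or be cut by it, and a stopping cube produced there does not sit inside $Q^+$, so there is nothing to integrate. Second, the hypothesis~(\ref{hisums}) is only available for half-cubes \emph{centred on} $\Sigma$, whereas a Calder\'on--Zygmund stopping cube is centred at an arbitrary interior point $y^{(k,j)}$; your remark that it ``has a nearby reflection point on $\Sigma$'' gestures at the fix but does not supply it. The paper's device — extending $\Phi,\Psi$ to $\widetilde\Phi,\widetilde\Psi$ by even reflection, then rescaling the \emph{full} cube $Q_{R_0}(z_0)$ to $Q_{3/2}(0)$ — settles both points at once: Lemma~\ref{caldz} applies verbatim on the full cube, and for a stopping cube $Q_R(y^{(k,j)})$ that meets $\Sigma$ one re-centres on $\Sigma$ and uses evenness, $\int_{Q_R}\widetilde\Phi = 2\int_{Q_R^+}\Phi$ etc., to reduce to~(\ref{hisums}); cubes that miss $\Sigma$ are handled by the interior estimate by symmetry. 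The telltale factor of $2$ in your $M$ shows you had this in mind — the proposal just needs to state the reflection explicitly and explain how it reduces the stopping-cube step to either the interior or the boundary reverse H\"older inequality. A minor further discrepancy: you propose distance-weighting $\Psi_0$ and an analogue $\varphi_0$, whereas the paper's $H(t)$ is built from the unweighted $\overline\Psi,\overline\varphi$; this only shifts constants, not the argument, but it is worth matching if you want to reproduce~(\ref{highup}) exactly.
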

\begin{proof}
 We prolong $\Phi$ and $\Psi$  as even functions with respect to 
 $\Sigma_{R_0}(z_0)$:
$$
\widetilde\Phi(z',z_n)=\Big\{
\begin{array}c
\Phi(z',z_n),\quad z_n>0 \\
\Phi(z',-z_n),\quad z_n<0
\end{array}
\qquad
\widetilde\Psi(z',z_n)=\Big\{
\begin{array}c
\Psi(z',z_n),\quad z_n>0 \\
\Psi(z',-z_n),\quad z_n<0.
\end{array}
$$

Transforming $ Q_{R_0}(z_0)$
 into $Q=Q_{3/2}(0)$ by the passage to
new coordinates system $y=3(z-z_0)/(2R_0)$,
setting $M=[3/(2R_0)]^{n/p}(\|\widetilde\Phi\|_{p,Q_{R_0}(z_0)}^p
+\|\widetilde\Psi\|_{p,Q_{R_0}(z_0)}^p+2R_0
\|\varphi\|_{p,\Sigma _{R_0}(z_0)}^p/3)^{1/p}$,  and defining  $\overline\Phi(y)=\widetilde\Phi(z_0+2R_0y/3)/M$,
 $\overline\Psi(y)=\widetilde\Psi(z_0+2R_0y/3)/M$, 
$\overline\varphi(y)=\varphi(z_0+2R_0y/3)/M$, we define
\begin{align*}
H(t)=\int_{Q[\overline\Psi >t]}\overline\Psi
^p(y)\mathrm{dy}+
\int_{\Sigma[\overline\varphi >t]}\overline\varphi^p(y)\mathrm{ds_y},\qquad
\Sigma=Q_{3/2}^{(n-1)}(0)\times \{0\}.
\end{align*}

The argument of the proof of Proposition \ref{gia-geh} remains valid
until (\ref{rr}), which reads
\[ 
(tR^n)^p\lambda< 2^{n} B
\left(\int_{Q_{R}}\Phi_0\mathrm{dy}
\right)^p+R^{n(p-1)}
\int_{Q_{R}}\overline\Psi^p\mathrm{dy}
+R^{n(p-1)+1} 
\int_{\Sigma_{R}}\overline\varphi^p\mathrm{ds_y}.
\] 
Evaluating the term
\[\left(R^{n(p-1)+1} 
\int_{\Sigma_{R}}\overline\varphi^p\mathrm{ds_y}\right)^{1/p}\leq
\left(R^{n(p-1)}
\int_{\Sigma_{R}[\overline\varphi>t]}\overline\varphi^p\mathrm{ds_y}\right)^{1/p}+t
2^{n-1\over p}R^{n},
\]
and proceeding as in  the proof of Proposition \ref{gia-geh},
 we obtain
\begin{align*}
tR^n\left[\sqrt[p]{\lambda}- 2^{n}\left(2 ^{n/p}
B^{1/p}+2\right)\right]<2^{n/ p}
B^{1/ p}\int_{Q_R[\Phi_0>t]}
\Phi_0\mathrm{dy}+\\
+t^{-p+1}\left(
\int_{Q_{R} [\overline\Psi>t]}\overline\Psi^p\mathrm{dy}+ 
\int_{\Sigma_{R}[\overline\varphi>t]}\overline\varphi^p\mathrm{ds_y}\right).
\end{align*}
Introduce the definitions of $\lambda$ and $\varkappa $ as in 
(\ref{lambd}) and (\ref{defva}), respectively.
For any $0<r< R_0$, and
$\omega\subset\subset Q=Q_{3/2}(0)$,
 keeping the same designation to the transformed
set $\omega\subset \subset  Q_{R_0}(z_0)$, we find
\begin{align*}
\left[{3{\rm dist}(\omega,\partial Q_{R_0}(z_0))\over 2R_0}\right]^{n(\gamma+1)/p}
\int_{\omega}\overline\Phi^{\gamma+1}\mathrm{dy}\leq 
{1\over \varkappa (p-1)-(\varkappa -1)\gamma}\times \\
\times\left[ (p-1)\left({3\over 2}\right)^n
\int_{Q}\overline\Phi^p\mathrm{dy}
+\varkappa \gamma 
\left(\int_{Q}\overline\Psi^{\gamma+1}\mathrm{dy}+ 
\int_{\Sigma}\overline\varphi^{\gamma+1}\mathrm{ds_y}\right)\right],
\end{align*}
for any $\gamma$
such that $p\leq \gamma+1<p+(p-1)/(\varkappa -1)$ and $\gamma+1\leq p+\delta$.
Therefore, setting $p+\varepsilon= \gamma+1$
we conclude (\ref{highup}).
\end{proof}

In a similar manner that we have Corollary \ref{high} 
from Proposition \ref{gia-geh}, Proposition \ref{surf} ensures the following 
Corollary.
\begin{corollary}\label{surft}
Under the conditions of Proposition \ref{surf},
if instead of (\ref{hisums}),
\begin{align*}
\frac 1{R^n}\int_{Q^+_{\alpha R}(z)} \Phi^p\mathrm{dx}
\leq B
\Big(\frac 1{R^n}\int_{Q^+_R(z)} \Phi^t \mathrm{dx}\Big)^{p/t}
+\frac 1{R^n}\int_{Q^+_R(z)} \Psi^p\mathrm{dx}+\nonumber
\\
+\frac 1{R^{n-1}}\int_{\Sigma_R(z)} \varphi^p\mathrm{ds},
\end{align*}
holds for $t<p$, then $\Phi\in  L^{p+\varepsilon}(Q_{r}^+(z_0))$,
for all $\varepsilon\in [0,\delta]\cap [0,(p-t)/(\varkappa -1)[$, with
 $\varkappa $ being
defined by (\ref{defva}), and $r=R_0/2$,
and it verifies
\begin{align}
\| \Phi\|_{p+\varepsilon,Q_r^+(z_0)}^{p+\varepsilon}
\leq R_0^{-n\varepsilon/p}
{(p-t)2^{n(p+\varepsilon)/p}\over p-t-(\varkappa -1)\varepsilon}
2^{3\varepsilon/p}\| \Phi\|_{p, Q_{R_0}^+(z_0)}^{p
+\varepsilon}+\nonumber\\
+{2^{n(p+\varepsilon)/p}\over p-t-(\varkappa 
-1)\varepsilon}
\left( 2^{(n+1)\varepsilon/p}(p-t)+
\varkappa (p-t+\varepsilon)\right)
\| \Psi\|_{p+\varepsilon, Q_{R_0}^+(z_0)}^{p+\varepsilon}+\nonumber\\
+
{2^{n(p+\varepsilon)/p}\over p-t-(\varkappa -1)\varepsilon}
\left(R_0(p-t)+\varkappa (p-t+\varepsilon)
\right)
 \| \varphi\|_{p+\varepsilon, \Sigma_{R_0}(z_0)}^{p+\varepsilon}
.\ 
\end{align}
\end{corollary}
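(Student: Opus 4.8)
The plan is to repeat, word for word, the reduction that produced Corollary~\ref{high} from Proposition~\ref{gia-geh}, but now starting from Proposition~\ref{surf}. Set $\bar p:=p/t$, which exceeds $1$ because $t<p$, and observe that the assumed reverse H\"older inequality is nothing but
\[
\frac 1{R^n}\int_{Q^+_{\alpha R}(z)}(\Phi^t)^{\bar p}\mathrm{dx}
\leq B\Big(\frac 1{R^n}\int_{Q^+_R(z)}\Phi^t\mathrm{dx}\Big)^{\bar p}
+\frac 1{R^n}\int_{Q^+_R(z)}(\Psi^t)^{\bar p}\mathrm{dx}
+\frac 1{R^{n-1}}\int_{\Sigma_R(z)}(\varphi^t)^{\bar p}\mathrm{ds},
\]
that is, estimate (\ref{hisums}) for the triple $(\Phi^t,\Psi^t,\varphi^t)$ and the exponent $\bar p$. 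The integrability hypotheses transfer correctly, since $\Psi,\varphi\in L^{p+\delta}$ forces $\Psi^t,\varphi^t\in L^{\bar p+\delta/t}$, so Proposition~\ref{surf} applies with $\bar\delta:=\delta/t$ (the even reflection across $\Sigma_{R_0}(z_0)$ being carried out inside that Proposition).

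Next I would invoke the conclusion (\ref{highup}) for $(\Phi^t,\Psi^t,\varphi^t)$ with exponent $\bar p$, choosing there the measurable set $\omega=Q_{R_0/2}(z_0)$, for which $\mathrm{dist}(\omega,\partial Q_{R_0}(z_0))=R_0/2$, so that the prefactor is $(2/R_0)^{n(1+\bar\varepsilon/\bar p)}=2^{n(p+\varepsilon)/p}R_0^{-n(p+\varepsilon)/p}$, where $\bar\varepsilon$ is the higher-integrability gain of $\Phi^t$ and $\varepsilon:=t\bar\varepsilon$. Because $\bar\varepsilon/\bar p=\varepsilon/p$ and $\bar p-1-(\varkappa-1)\bar\varepsilon=t^{-1}\big(p-t-(\varkappa-1)\varepsilon\big)$, the admissible range $\bar\varepsilon\in[0,\bar\delta]\cap[0,(\bar p-1)/(\varkappa-1)[$ turns into exactly $\varepsilon\in[0,\delta]\cap[0,(p-t)/(\varkappa-1)[$; the factor $t^{-1}$ in the denominator cancels against the factors $\bar p-1=(p-t)/t$ and $\varkappa(\bar p-1+\bar\varepsilon)=\varkappa(p-t+\varepsilon)/t$ in the numerators of (\ref{highup}); and $\Phi^t\in L^{\bar p+\bar\varepsilon}(Q^+_{r}(z_0))$ is precisely $\Phi\in L^{p+\varepsilon}(Q^+_{r}(z_0))$ with $r=R_0/2$.

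Finally, to pass to the clean format displayed, I would bound the parenthesised factor of (\ref{highup}) raised to the power $1+\varepsilon/p$ by the power--mean inequality $(a+b+c)^{s}\leq 3^{s-1}(a^{s}+b^{s}+c^{s})$ with $s=1+\varepsilon/p$, using $\|\Phi^t\|_{\bar p}^{\bar p}=\|\Phi\|_{p}^{p}$, $\|\Psi^t\|_{\bar p}^{\bar p}=\|\Psi\|_{p}^{p}$, $\|\varphi^t\|_{\bar p}^{\bar p}=\|\varphi\|_{p}^{p}$, so that the leading contribution becomes $\|\Phi\|_{p,Q^+_{R_0}(z_0)}^{p+\varepsilon}$ directly; then converting the emerging $\|\Psi\|_{p}^{p+\varepsilon}$ and $\|\varphi\|_{p}^{p+\varepsilon}$ into $L^{p+\varepsilon}$-norms by H\"older's inequality on $Q^+_{R_0}(z_0)$ (of measure at most $(2R_0)^n$) and on $\Sigma_{R_0}(z_0)$ (of $(n-1)$-measure at most $(2R_0)^{n-1}$), which supplies the stated powers of $R_0$; and for the two terms of (\ref{highup}) already outside the power, using $\|\Psi^t\|_{\bar p+\bar\varepsilon}^{\bar p+\bar\varepsilon}=\|\Psi\|_{p+\varepsilon}^{p+\varepsilon}$ and $\|\varphi^t\|_{\bar p+\bar\varepsilon}^{\bar p+\bar\varepsilon}=\|\varphi\|_{p+\varepsilon}^{p+\varepsilon}$. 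Collecting the powers of $2$ and of $R_0$ produced by the dist-prefactor, by the power--mean step, and by the two H\"older steps (and absorbing, as in the analogous passages of the paper, constants such as $3^{\varepsilon/p}$ into slightly larger powers of $2$, using $n\geq 2$) yields the displayed inequality. Conceptually this is a one-line substitution; the only genuine work, and the step where an error is most apt to creep in, is precisely this final bookkeeping of the explicit constants, which has to be reconciled exactly with those written in the statement.
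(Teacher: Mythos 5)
Your strategy is exactly the one the paper intends (the paper in fact gives no written proof of Corollary~\ref{surft}, remarking only that it follows ``in a similar manner'' to Corollary~\ref{high} from Proposition~\ref{gia-geh}): replace $(\Phi,\Psi,\varphi,p)$ by $(\Phi^t,\Psi^t,\varphi^t,\bar p)$ with $\bar p=p/t>1$, invoke Proposition~\ref{surf} with $\omega=Q_{R_0/2}(z_0)$, and translate back via $\varepsilon=t\bar\varepsilon$. Your bookkeeping of the admissibility ranges is correct: $\bar\varepsilon\in[0,\delta/t]\cap[0,(\bar p-1)/(\varkappa-1)[$ becomes $\varepsilon\in[0,\delta]\cap[0,(p-t)/(\varkappa-1)[$, and the factor $t^{-1}$ produced in the denominator $\bar p-1-(\varkappa-1)\bar\varepsilon=t^{-1}(p-t-(\varkappa-1)\varepsilon)$ cancels against $\bar p-1=(p-t)/t$ and $\varkappa(\bar p-1+\bar\varepsilon)=\varkappa(p-t+\varepsilon)/t$ in the numerators, precisely as you say.

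Where your sketch does not actually reproduce the displayed inequality is the last step. You propose bounding $\bigl(2\|\Phi\|_p^p+2\|\Psi\|_p^p+\tfrac{2R_0}{3}\|\varphi\|_p^p\bigr)^{1+\varepsilon/p}$ by the three--term power--mean inequality $(a+b+c)^s\le 3^{s-1}(a^s+b^s+c^s)$ and then ``absorbing $3^{\varepsilon/p}$ into slightly larger powers of~$2$ using $n\ge 2$''. Carrying this out, the coefficient of $\|\Psi\|_{p+\varepsilon,Q^+_{R_0}(z_0)}^{p+\varepsilon}$ contributed by this term (after the H\"older step on $Q^+_{R_0}(z_0)$) is $(p-t)\,3^{\varepsilon/p}2^{n\varepsilon/p}$, and replacing $3^{\varepsilon/p}$ by $2^{n\varepsilon/p}$ only makes it worse. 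Since $3\cdot 2^n>2^{n+1}$, this is strictly larger than the stated $2^{(n+1)\varepsilon/p}(p-t)$, so as written the argument only proves a weaker inequality. To get constants of the claimed size one must mirror Corollary~\ref{high} and use the two--term inequality $(a+b)^s\le 2^{s-1}(a^s+b^s)$ iterated as $\bigl((a+b)+c\bigr)^s\le 2^{2(s-1)}(a^s+b^s)+2^{s-1}c^s$: this produces the exponent $2^{3\varepsilon/p}$ on the $\Phi$-term exactly as displayed. Even so, my reading of the remaining H\"older step (using $|Q^+_{R_0}(z_0)|=2^{n-1}R_0^n$) leaves a residual $2^{\varepsilon/p}$ on the $\Psi$-coefficient relative to what is displayed; the paper is otherwise explicit about constants, so this reconciliation is exactly the ``genuine work'' that you yourself flag, and it has to be written out, not deferred.
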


\section{Local higher regularity of the gradient}
\label{schigh}

We separately study the interior and up to the boundary
the local higher regularity of the gradient of any
 weak solution to (\ref{omega})-(\ref{gama}).

\subsection{Interior higher regularity of the gradient}
\label{scint}

Let us begin by establishing a technical result.
\begin{lemma}\label{tecn}
For any $x\in\mathbb{R}^n$ and $R >0$, if 
 $\eta\in W^{1,\infty}_0(Q_R(x))$ is such that
 $0\leq \eta\leq 1$  and $|\nabla\eta|\leq 2/R$, then every 
$u\in W^{1,2n/(n+2)}(Q_R(x))$ verifies
\begin{equation}\label{qrx}
\| \eta(u-{-\hspace*{-0.4cm}}\int_{Q_R(x)}u\mathrm{dx})\|_{2,Q_R(x)}\leq  
 P_{\sqrt{n},2n/(n+2)}\|\nabla u\|_{{2n}/({n+2}),Q_R(x)},
\end{equation}
where
$P_{\sqrt{n},q}= S_{q}(1+2\sqrt{n}q\sin[\pi/q](q-1)^{-1/q}/\pi)$ if $q>1$,
$P_{\sqrt{2},1}=3S_1$ if $n=2$, and $S_{2n/(n+2)}=\pi^{-1/2}n^{(2-3n)/(2n)} (n-2)^{(n-2)/(2n)}[\Gamma(n)/
\Gamma(n/2)]^{1/n}$ (see Remark \ref{rsob}).
\end{lemma}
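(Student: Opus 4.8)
The plan is to produce the Poincaré-type inequality \eqref{qrx} by combining the Sobolev embedding $W^{1,2n/(n+2)}(Q_R(x))\hookrightarrow L^{2}(Q_R(x))$ (which is precisely $q^\ast=2$ for $q=2n/(n+2)$, the threshold case) with a quantitative estimate on how multiplication by the cutoff $\eta$ and subtraction of the mean interact with the gradient. First I would reduce to the unit cube $Q_1(0)$ by the dilation $y=(z-x)/R$: under this change of variables both sides of \eqref{qrx} scale the same way (the $L^2$-norm on the left picks up $R^{n/2}$, the $L^{2n/(n+2)}$-norm of the gradient on the right picks up $R^{n(n+2)/(2n)-1}=R^{n/2}$, after accounting for the factor $R$ from $\nabla$), so the constant $P_{\sqrt n,q}$ is genuinely scale-invariant and it suffices to treat $R=1$, with $|\nabla\eta|\le 2$. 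Write $w=u-(u)_{Q_R(x)}$, so $(w)_{Q_1(0)}=0$ and $\nabla w=\nabla u$.

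The core step is to estimate $\|\eta w\|_{2,Q_1}$. Since $\eta w\in W^{1,q}(Q_1)$ with $q=2n/(n+2)$ and $q^\ast=2$, the Sobolev inequality (Remark \ref{rsob}, constant $S_q$; note that for a cube rather than $\mathbb R^n$ one uses that $\eta w$ extends by zero, or more simply that $S_q$ is the optimal constant for the full-space inequality which then bounds the cube norm) gives $\|\eta w\|_{2,Q_1}\le S_q\|\nabla(\eta w)\|_{q,Q_1}\le S_q(\|\eta\nabla w\|_{q,Q_1}+\|w\nabla\eta\|_{q,Q_1})\le S_q(\|\nabla w\|_{q,Q_1}+2\|w\|_{q,Q_1})$. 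It remains to control $\|w\|_{q,Q_1}$ — the $L^q$-norm of the mean-zero function $w$ — by $\|\nabla w\|_{q,Q_1}$, i.e.\ a Poincaré–Wirtinger inequality on the unit cube with an explicit constant. I would invoke the classical sharp estimate: for a bounded convex domain $G$ of diameter $d$ one has $\|w-(w)_G\|_{q,G}\le \big(d^n\omega_n^{-1}q\sin(\pi/q)/(\pi(q-1)^{1/q}\,|G|^{1-1/n}\cdot\ldots)\big)$-type bound; concretely, for $Q_1(0)$ (diameter $\sqrt n$, using the max-norm convention of the paper) the Payne–Weinberger / Acosta–Durán constant yields $\|w\|_{q,Q_1}\le \frac{\sqrt n\,q\sin(\pi/q)}{\pi(q-1)^{1/q}}\|\nabla w\|_{q,Q_1}$. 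Substituting this back gives $\|\eta w\|_{2,Q_1}\le S_q\big(1+\tfrac{2\sqrt n\,q\sin(\pi/q)}{\pi(q-1)^{1/q}}\big)\|\nabla w\|_{q,Q_1}$, which is exactly $P_{\sqrt n,q}$. For the special case $n=2$, $q=1$, the exponent $q-1=0$ degenerates, so instead I would use the direct chain $\|\eta w\|_{2,Q_1}\le S_1\|\nabla(\eta w)\|_{1,Q_1}\le S_1(\|\nabla w\|_{1,Q_1}+2\|w\|_{1,Q_1})$ together with the $L^1$ Poincaré–Wirtinger inequality on the unit square with constant $1$ (diameter normalized), giving the factor $S_1(1+2\cdot 1)=3S_1=P_{\sqrt 2,1}$.

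The main obstacle I anticipate is pinning down the \emph{explicit} Poincaré–Wirtinger constant on the cube in a form that produces exactly the stated $q\sin(\pi/q)(q-1)^{-1/q}/\pi$ factor, since the literature offers several non-equivalent sharp constants (Payne–Weinberger for $q=2$, its $L^q$ generalization by Esposito–Nitsch–Trombetti or Acosta–Durán for general $q$) and one must match conventions — in particular the paper's use of the sup-norm to define cubes and the resulting diameter $\sqrt n$. A secondary technical point is justifying the Sobolev step on the cube with the full-space optimal constant $S_q$: this is legitimate because $\eta w$ vanishes near $\partial Q_R(x)$, so it extends by zero to $\mathbb R^n$ and the sharp Talenti constant applies directly; I would state this extension explicitly. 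Everything else — the scaling reduction, the product rule, the triangle inequality, and collecting constants — is routine and I would not belabor it.
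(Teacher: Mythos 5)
Your proposal follows exactly the same two-step route as the paper: apply the sharp Sobolev embedding $W^{1,2n/(n+2)}\hookrightarrow L^2$ to $\eta\,(u-(u)_{Q_R})$ (with the product rule and $|\nabla\eta|\le 2/R$), and then absorb $\|u-(u)_{Q_R}\|_{2n/(n+2)}$ via the Poincar\'e--Wirtinger inequality with the explicit constant for convex domains from Ferone--Nitsch--Trombetti (the paper's cite \cite{ferone}) for $q>1$ and Acosta--Dur\'an (\cite{acosta}) for $q=1$. The rescaling to the unit cube and the remark about extending $\eta w$ by zero to invoke Talenti's full-space constant $S_q$ are sensible clarifications but do not change the argument; the small wobble in how you quote the diameter of $Q_1(0)$ does not affect the final constant you obtain, which matches $P_{\sqrt n,q}$.
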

\begin{proof}
Making use of Remark \ref{rsob} with $q=2n/(n+2)<2\leq n$,
we obtain
\begin{align*}
\| \eta( u-{-\hspace*{-0.4cm}}\int_{Q_R(x)}u\mathrm{dx} )\|_{2,Q_R(x)}
\leq  S_{2n/(n+2)}\left(\|\nabla u\|_{2n/(n+2),Q_R(x)} + \right. \\ \left. +
\frac2R
\| u-{-\hspace*{-0.4cm}}\int_{Q_R(x)}u\mathrm{dx}\|_{2n/(n+2),Q_R(x)} \right).
\end{align*}
It is known that   the Poincar\'e constant, denoted by
 $C_{\Omega,q}$, that  stands for  the
optimal constant  in the Poincar\'e inequality, depends upon the value of $q$ 
and the geometry of the domain. 
Moreover, the Poincar\'e constant is at most 
 $dq\sin[\pi/q](q-1)^{-1/q}/(2\pi)$ \cite{ferone}
and $d/2$ if $q=1$ \cite{acosta}, if the domain is a bounded, convex, Lipschitz  with diameter $d $.
Hence, 
we conclude (\ref{qrx}).
\end{proof}

We show the interior local $W^{2+\varepsilon}$-estimate for any weak solution to the
problems under study (regardless of 
whether or not the solution under consideration
is subject to any boundary condition)
 if provided by data with higher integrability.
\begin{proposition}\label{int1}
If there exists $\delta>0$ such that
  ${\bf f}\in {\bf L}^{2+\delta}(\Omega)$, and  $f\in L^{2+\delta}(\Omega)$,
 then any function $u \in V_{2,\ell}$ solving
 (\ref{pbu}) or (\ref{pbun}) belongs to $ W^{1,2+\varepsilon}_{\rm loc}(\Omega)$,
for all $\varepsilon\in [0,\delta]\cap [0,4 ((n+2)(\upsilon_{\rm I}-1)) ^{-1}[$, where 
\[\upsilon_{\rm I}=
 (8^n+1) 2^{6n}\left[2P_{\sqrt{n},2n/(n+2)}
\left( {2\over a_\#}\right)^{1/2}
\left( {4(a^\#)^2\over a_\#}+1+{\nu_3\over 2} \right)^{1/2}
+1\right]^2,\]
with 
 $\nu_3=\nu_3(f)$ being positive constant if 
  $f\not= 0$, and  $\nu_3(0)=0$ otherwise. In particular, 
for every $x_0\in\Omega$ and $0<r<
{\rm dist}(x_0,\partial\Omega)/\sqrt{n}$,
 we have
\begin{align}
\| \nabla u\|^{2+\varepsilon}_{2+\varepsilon,Q_{r/2}(x_0)}
\leq  r^{-n\varepsilon/2} 2^{\varepsilon/2}
Z_1(\upsilon_{\rm I})
\| \nabla u\|_{2, Q_{r}(x_0)}^{2+\varepsilon}
+ \nonumber\\ +
\left(2^{(n+1)\varepsilon/2}
Z_1(\upsilon_{\rm I})+
Z_2(\upsilon_{\rm I})\right) 
\| {\mathcal F}(a_\#)\|_{2+\varepsilon, Q_{r}(x_0)}^{2+\varepsilon},
\label{ri}
\end{align}
with ${\mathcal F}(a_\#)$, $Z_1$ and $Z_2$ being the functions defined in 
(\ref{fa}), (\ref{zz1}) and  (\ref{zz2}),
 respectively.
\end{proposition}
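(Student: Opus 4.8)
The plan is to establish the Caccioppoli-type (reverse H\"older) inequality first, and then feed it into Corollary \ref{high}. Fix $x_0\in\Omega$ and $0<R<\mathrm{dist}(x_0,\partial\Omega)/\sqrt n$. For a concentric cube $Q_R(x_0)$ and a cutoff $\eta\in W^{1,\infty}_0(Q_R(x_0))$ with $0\le\eta\le1$, $\eta\equiv1$ on $Q_{\alpha R}(x_0)$, and $|\nabla\eta|\le 2/((1-\alpha)R)$, I would test the weak formulation (\ref{pbu}) (or (\ref{pbun})) with $v=\eta^2(u-c)$, where $c={-\hspace*{-0.33cm}}\int_{Q_R(x_0)}u\,\mathrm{dx}$; note this test function is admissible since it is supported in the interior, so the boundary terms on $\Gamma$ and $\Gamma_{\rm N}$ do not appear. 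Expanding $\nabla v=\eta^2\nabla u+2\eta(u-c)\nabla\eta$ and using uniform ellipticity (\ref{amin}) on the left and uniform boundedness (\ref{amax}) together with Young's inequality on the right gives
\[
a_\#\int\eta^2|\nabla u|^2\,\mathrm{dx}\le
C\Big(\tfrac{(a^\#)^2}{a_\#}\int|\nabla\eta|^2(u-c)^2\,\mathrm{dx}
+\tfrac1{a_\#}\int\eta^2|{\bf f}|^2\,\mathrm{dx}
+\tfrac1{\nu_3}\int\eta^2(u-c)|f|\,\mathrm{dx}\Big),
\]
absorbing the $\eta^2|\nabla u|^2$ term coming from the ${\bf f}$ pairing into the left side; the $f$ term is handled by Young with a free parameter $\nu_3$ (set to $0$ when $f\equiv0$), matching the definition of $\mathcal F(a_\#)$ in (\ref{fa}).

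The crucial step is converting the $\int|\nabla\eta|^2(u-c)^2$ term into a quantity involving $\int|\nabla u|^{2n/(n+2)}$ so that the right-hand side has the sub-$p$ exponent needed for a genuine reverse H\"older inequality with self-improving exponent. Here I would invoke Lemma \ref{tecn}: with $|\nabla\eta|\le 2/R$ (taking $\alpha=1/2$ so $(1-\alpha)R=R/2$ and the bound $|\nabla\eta|\le 4/R$ is handled by absorbing the factor), one gets
\[
\|\eta(u-c)\|_{2,Q_R(x_0)}\le P_{\sqrt n,2n/(n+2)}\|\nabla u\|_{2n/(n+2),Q_R(x_0)}.
\]
Dividing the Caccioppoli estimate by $R^n$ and applying H\"older to the ${\bf f}$ and $f$ integrals to raise them from $L^2$-averages to $L^{2+\varepsilon}$-averages, I obtain exactly the structure (\ref{hisum2}) of Corollary \ref{high} with $\Phi=|\nabla u|$, $\Psi=\mathcal F(a_\#)$ (extended by zero outside $\Omega$), exponent $p=2$, the sub-exponent $t=2n/(n+2)<2$, and the constant $B$ computed explicitly as
\[
B=\Big(2P_{\sqrt n,2n/(n+2)}(2/a_\#)^{1/2}\big(4(a^\#)^2/a_\#+1+\nu_3/2\big)^{1/2}\Big)^2,
\]
which is precisely what makes $\varkappa=(8^n+1)2^{3n\cdot2}(B^{1/2}+1)^2$ equal the stated $\upsilon_{\rm I}$.

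Finally I would apply Corollary \ref{high} directly. With $p=2$, $t=2n/(n+2)$, we have $p-t=2-2n/(n+2)=4/(n+2)$, so the admissible range $q\in[p,p+(p-t)/(\varkappa-1)[$ becomes $2+\varepsilon$ with $\varepsilon\in[0,\delta]\cap[0,4((n+2)(\upsilon_{\rm I}-1))^{-1}[$, matching the statement. Substituting $p=2$, $q=2+\varepsilon$ into (\ref{highert}) and recognizing the combinations $2^{nq/p}/(q-t-\varkappa(q-p))=2^{n(1+\varepsilon/2)}\cdot 4/(4-(n+2)(\upsilon_{\rm I}-1)\varepsilon)$ as $Z_1(\upsilon_{\rm I})$ and the companion term $\varkappa(q-t)$ combination as $Z_2(\upsilon_{\rm I})$ — using $q-t=2+\varepsilon-2n/(n+2)=(4+(n+2)\varepsilon)/(n+2)$ — yields (\ref{ri}) after bounding $\|\Psi\|_{2+\varepsilon,Q_r(x_0)}\le\|\mathcal F(a_\#)\|_{2+\varepsilon,Q_r(x_0)}$. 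The main obstacle is the careful bookkeeping: getting the constant $B$ to come out in exactly the closed form dictating $\upsilon_{\rm I}$, which requires a judicious choice of the Young-inequality parameters (in particular, splitting the $(a^\#)^2/a_\#$ coefficient as $4(a^\#)^2/a_\#+1$ after absorption) and keeping track of the factor from $|\nabla\eta|\le 4/R$ versus the $2/R$ assumed in Lemma \ref{tecn}; everything else is a mechanical substitution into the already-proven Corollary \ref{high}.
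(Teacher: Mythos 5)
Your proposal is correct and follows essentially the same route as the paper: test with $v=\eta^2(u-c)$ (so the boundary terms drop out), derive the Caccioppoli inequality with Young parameters, convert the $\|\eta(u-c)\|_2$ term via Lemma \ref{tecn} into the sub-critical $L^{2n/(n+2)}$ norm of $\nabla u$, and feed the resulting reverse H\"older inequality into Corollary \ref{high} with $p=2$, $t=2n/(n+2)$, $\Psi=\mathcal F(a_\#)$. Your constant $B$ coincides with the paper's choice (\ref{defbb}) evaluated at $\nu_0=\nu_1=1/4$, $\nu_2=1$, so $\varkappa=\upsilon_{\rm I}$ and the substitution into (\ref{highert}) reproduces (\ref{ri}) exactly as you describe.
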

\begin{proof}
Let  $\eta\in W^{1,\infty}(\mathbb{R}^n)$, and $U\in \mathbb{R}$.
 Taking $v=\eta^2(u-U) \in V_{2,\ell}$ as a test function in (\ref{pbu})
 or (\ref{pbun}),
 applying the H\"older inequality, and using (\ref{amin})-(\ref{amax}),  
we obtain
 \begin{align}
 {a_\#}\|\eta \nabla u\|_{2,\Omega}^2
\leq 2a^\# \|\eta\nabla u\|_{2,\Omega}
\| (u-U)\nabla \eta \|_{2,\Omega}+\nonumber\\+
 \|\eta{\bf f}\|_{2,\Omega}\left(\|\eta\nabla u\|_{2,\Omega}+
2\| (u-U)\nabla \eta \|_{2,\Omega}\right)
 +\|\eta f\|_{2,\Omega}\|\eta (u-U)\|_{2,\Omega}.\label{pbuab}
\end{align}

Fix $z\in\Omega$ and $0<r<R\leq R_0$
 such that  $Q_{R_0}(z)\subset\subset\Omega$.
Choosing  $\eta\equiv 1$ in $Q_r(z)$, $\eta\equiv 0$ in $\mathbb{R}^n
\setminus Q_R(z)$,  $0\leq \eta\leq 1$ in $\mathbb{R}^n$,
 and $|\nabla\eta|\leq 1/(R-r)$ a.e. in $Q_R(z)$. 
 For any  $\alpha\in [1/2,1[$, choosing $r=\alpha R$, we have $R-r<1$.
Thus, applying the Young inequality, we get
 \begin{align*}
 {a_\#}(1-\nu_0-\nu_1)\|\nabla u\|_{2,Q_{\alpha R}(z)}^2\leq
{1\over (1-\alpha)^2R^2} \left( {(a^\#)^2\over\nu_0 a_\#}+
\nu_2+{\nu_3\over 2} \right)\|\eta (u-U)\|_{2,Q_R(z)}^2
+\\+
\left( {1\over 4\nu_1a_\#}+ {1\over\nu_ 2}\right)\|{\bf f}\|_{2,Q_R(z)}^2
+{1\over 2\nu_3}\|f\|_{2,Q_R(z)}^2.
\end{align*}
Next, taking $\alpha=1/2$, applying Lemma \ref{tecn} with
 $U=(2R)^{-n}\int_{Q_R(x)}u\mathrm{dx}$,
and divided by $R^n$, we obtain
\begin{align*}
\frac1{R^n}\int_{Q_{ R/2}(z)}|\nabla{  u}|^2 \mathrm{dx}\leq
B \Big(\frac1{R^n} \int_{Q_R(z)}|\nabla u|^{2n/(n+2)}\mathrm{dx}
\Big)^{n+2\over n}+\\ +{1\over a_\#(1-\nu_ 0-\nu_ 1)}
\frac1{R^n}\Big(\int_{Q_R(z)}
\left( {1\over 4\nu_1 a_\#}+ {1\over\nu_ 2}\right)|{ \bf f}|
^{2}\mathrm{dx}+\int_{Q_{R}(z)}{1\over 2\nu_3}
|{  f}|^{2}\mathrm{dx}\Big),
\end{align*}
with $\nu_0+\nu_1<1$,  and 
\begin{equation}\label{defbb}
B= {4\over a_\#(1-\nu_ 0-\nu_ 1)}
\left( {(a^\#)^2\over \nu_0a_\#}+\nu_2+{\nu_3\over 2} \right)
(P_{ \sqrt{n} ,2n/(n+2)})^2
.\end{equation}

Applying  Corollary \ref{high} with
$\Phi=|\nabla u|$, 
 $t=2n/(n+2)$, $p=2$,
\begin{equation}\label{psi2}
\Psi=\left({({1/(4\nu_1 a_\#)}+ 1/\nu_2)
|{\bf f}|^{2}+|f|^{2}/(2\nu_3)\over a_\#(1-\nu_ 0-\nu_ 1)}
\right)^{1/2} \in L^{2+\delta}(\Omega),
\end{equation}
and taking $\nu_0 =\nu_1=1/4$ and $\nu_2=1$, we conclude the claim.
\end{proof}

\subsection{Higher regularity up to the boundary of the gradient}
\label{scup}

Let us recall the general definition of  $C^{k,\lambda}$ domain.
\begin{definition}\label{cka}
We say that 
$\Omega$ is a domain of class $C^{k,\lambda}$ (or simply  $C^{k,\lambda}$ domain),
$k\in \mathbb N_0$ and $ \lambda\in [0,1],$ if
$\Omega$ is an open, bounded, connected, nonempty set 
 of $\mathbb R^n$ and it verifies the following:
\begin{equation}
\exists M\in\mathbb N\quad
\exists  \varrho, \nu>0:\qquad\partial\Omega=\cup^M_{m=1}\Gamma_m,
\end{equation}
with 
\begin{enumerate} 
\item 
$\Gamma_m=O^{-1}_m(\{y=(y',y_{n})\in
Q^{(n-1)}_\varrho(0)\times\mathbb{R}:$ $y_{n}=\varpi_m(y')\}$,
\item
$O^{-1}_m(\{y=(y',y_{n})\in Q^{(n-1)}_\varrho(0)\times\mathbb{R}: $
$\varpi_m(y')<y_{n}<\varpi_m(y')+\nu\}) \subset\Omega,$
\item
$O^{-1}_m(\{y=(y',y_{n})\in Q^{(n-1)}_\varrho(0)\times\mathbb{R}:$
$\varpi_m(y')-\nu<y_{n}<\varpi_m(y')\}) \subset \mathbb R^n\setminus \Omega,$
\end{enumerate} 
where
\[
Q^{(n-1)}_\varrho(0)=\{y'=(y_{1},\cdots,y_{(n-1)})\in
\mathbb{R}^{n-1}:\ |y_{i}|<\varrho,\ i=1,\cdots,n-1\},
\]
and for each  $m=1,\cdots,M$, $ O_m:\mathbb R^n\rightarrow \mathbb R^n$ 
denotes a local coordinate system:
$$y^{(m)}=O_m(x)= \mathsf{O} x+b,\quad 
 \mathsf{O}^{-1}= \mathsf{O}^T,\ \det \mathsf{O}=1;$$
and $ \varpi_m\in C^{k,\lambda}(Q^{(n-1)}_\varrho(0))$.
\end{definition}

\begin{proposition}\label{propup}
Let $\Omega$ be a $C^{1}$ domain. 
 If there exists $\delta>0$ such that
  ${\bf f}\in {\bf L}^{2+\delta}(\Omega)$, $f\in L^{2+\delta}(\Omega)$, 
$g\in L^{2+\delta}(\Gamma_{\rm N})$ and
$h\in L^{2+\delta}(\Gamma)$, then 
for every $x_0\in\partial\Omega$ there exists a cube $Q_0\subset\mathbb{R}^n$
of side length $2R_0$ centered at the point $x_0$ such that 
any function $u \in V_{2,\ell}$ solving (\ref{pbu}) verifies
\begin{align}
\| \nabla u\|^{2+\varepsilon}_{2+\varepsilon,Q_{r/2}\cap\Omega}
\leq r^{-n\varepsilon/2}2^{3\varepsilon/2}Z_1(\upsilon_{\rm U})
\| \nabla u\|_{2, Q_r\cap\Omega}^{2+\varepsilon}+\nonumber\\
+\left( 2^{(n+1)\varepsilon/2}Z_1(\upsilon_{\rm U})
+Z_2(\upsilon_{\rm U}) \right)
\| {\mathcal F }(a_\#)\|_{2+\varepsilon, Q_r\cap\Omega}^{2+\varepsilon}
+\nonumber\\
+
\left( Z_1(\upsilon_{\rm U})+ Z_2(\upsilon_{\rm U})\right)
 \| {\mathcal H}_0(a_\#,b^\#)
\|_{2+\varepsilon, Q_r\cap\partial\Omega}^{2+\varepsilon},
\label{ru}
\end{align}
 for any cube $Q_r\subset\subset Q_0$ of radius $r$ centered at $x_0$,
 \begin{equation}\label{defuu}
\upsilon_{\rm U}=
 (8^n+1) 2^{6n}\left[2P_{\sqrt{n},2n/(n+2)} \left({2\over a_\#}\right)^{1/2}
\left( {8(a^\#)^2\over a_\#}+2+{\nu_3\over 2} \right)^{1/2}
+1\right]^2,
\end{equation}
 where  $\nu_3=\nu_3(f)$ is a positive constant if   $f\not=0$,
and $\nu_3(0)=0$ otherwise, and 
\[
 {\mathcal H}_0(a_\#, b^\#) ={2K_{{2n/( n+1)}}
\over (a_\#)^{1/2}}
\left({2\over  a_\#}+{ 2^{-1/n}}\right)^{1/2} 
|g\chi_{\Gamma_{\rm N}}+\left(h+b^\# [{\rm ess}\sup_{Q_r\cap\Omega}
 |u|] ^{\ell -1}\right) \chi_{\Gamma}|.
\]
Here ${\mathcal F}(a_\#)$,
$Z_1$ and $Z_2$ are the functions defined in (\ref{fa}), (\ref{zz1}) and  (\ref{zz2}),
 respectively.
\end{proposition}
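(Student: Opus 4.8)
The plan is to run the argument of Proposition~\ref{int1} in a flattened neighbourhood of a boundary point and then to replace Corollary~\ref{high} by its up-to-the-boundary counterpart, Corollary~\ref{surft}. First I would fix $x_0\in\partial\Omega$, take the $C^1$ chart $O_m$ of Definition~\ref{cka} with $x_0\in\Gamma_m$, and compose it with the straightening map $(y',y_n)\mapsto(y',y_n-\varpi_m(y'))$. Choosing the cube $Q_0$ centred at $x_0$ small enough, this composition is a bi-Lipschitz $C^1$-diffeomorphism carrying $Q_0\cap\Omega$ onto (a subset of) a half-cube $Q_0^+$ and $Q_0\cap\partial\Omega$ onto a flat piece. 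Since its Jacobian $J$ has $\det J=1$, the change of variables turns (\ref{pbu}) into a variational problem of the same type on $Q_0^+$: the leading matrix becomes $\widetilde{\mathsf A}=J^{-1}\mathsf A J^{-\top}$, still uniformly elliptic and bounded with constants controlled by $a_\#$ and $a^\#$ (the residual distortion being governed by $\|\nabla\varpi_m\|_{\infty,Q_0}$, which the $C^1$ regularity lets us make as small as we wish), the bulk data ${\bf f},f$ are sent to data of the same integrability, and the monotone operator $b$ retains (\ref{bmin})--(\ref{bmax}) up to the bounded surface Jacobian $\sqrt{1+|\nabla\varpi_m|^2}$. Hence it suffices to prove (\ref{ru}) in the model case of a flat boundary.

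The core step is a Caccioppoli-type reverse H\"older inequality obtained, as in Proposition~\ref{int1}, by inserting $v=\eta^2(u-U)\in V_{2,\ell}$ into (\ref{pbu}), but now with a cut-off $\eta$ supported in a cube $Q_R(z)$ centred at a point $z$ of the \emph{flat} boundary, with $\eta\equiv1$ on $Q_{R/2}(z)$, $0\le\eta\le1$, $|\nabla\eta|\le2/R$, and \emph{not} required to vanish on $\partial\Omega\cap Q_R(z)$. Compared with the interior computation, the only new contributions are: the monotone term $\int_\Gamma b(u)\,\eta^2(u-U)\,\mathrm{ds}$, which I would bound by $|b(u)|\le b^\#|u|^{\ell-1}\le b^\#\,({\rm ess}\sup_{Q_r\cap\Omega}|u|)^{\ell-1}$ using the available $L^\infty$-bound (\cite{lc-ijpde,sinica}, see also (\ref{supess})), and then lump together with $h$ on $\Gamma$ and $g$ on $\Gamma_{\rm N}$ into the single bounded surface datum ${\mathcal H}_0(a_\#,b^\#)$; and the ensuing surface integral of this datum against $\eta^2(u-U)$, which I would control by the trace embedding $W^{1,2n/(n+1)}(\Omega)\hookrightarrow L^2(\partial\Omega)$ with the explicit constant $K_{2n/(n+1)}$ of Remark~\ref{rk}, followed by the H\"older inequality on $Q_R(z)$ and the Young inequality. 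Taking $\alpha=1/2$, $r=\alpha R$, applying Lemma~\ref{tecn} with $U=(2R)^{-n}\int_{Q_R(z)}u\,\mathrm{dx}$ and dividing by $R^n$ then yields an inequality of exactly the form hypothesised in Corollary~\ref{surft}, with $p=2$, $t=2n/(n+2)$, $\Phi=|\nabla u|$, $\Psi$ a multiple of ${\mathcal F}(a_\#)$ from (\ref{fa}), $\varphi$ a multiple of ${\mathcal H}_0(a_\#,b^\#)$, and a constant $B$ whose value, carried through (\ref{defva}) with $p=2$, is precisely the number $\upsilon_{\rm U}$ of (\ref{defuu}).

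It then remains to invoke Corollary~\ref{surft} on $Q_0^+$ --- the cubes $Q_R(z)\subset\subset\Omega$ that still meet $Q_0$ are covered by the interior Caccioppoli inequality already established inside the proof of Proposition~\ref{int1}, so no cube is lost --- to read off the admissible range $\varepsilon\in[0,\delta]\cap[0,4\,((n+2)(\upsilon_{\rm U}-1))^{-1}[$, and to rewrite the resulting quantitative bound in terms of $Z_1,Z_2$ from (\ref{zz1})--(\ref{zz2}); transporting it back to the original coordinates gives (\ref{ru}). The main obstacle is neither the flattening nor the Gehring machinery but the bookkeeping in the Young-inequality step: part of the coefficient of $\|\eta\nabla u\|_{2,Q_R(z)}^2$ that one absorbs in the interior case must now be set aside for the trace term produced by the boundary data, which forces one to halve the absorption parameters (taking, say, $\nu_0=1/8$ and $\nu_2=2$ in place of $\nu_0=1/4$ and $\nu_2=1$) and thereby doubles the contributions $(a^\#)^2/a_\#$ and $1$ inside $B$. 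This is exactly the discrepancy between $\upsilon_{\rm U}$ in (\ref{defuu}) and $\upsilon_{\rm I}$ of Proposition~\ref{int1}, and keeping it explicit --- rather than concealed behind a compactness argument as in \cite{cp,shen} --- is the whole point of the estimate.
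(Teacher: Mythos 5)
Your proposal is correct and follows essentially the same route as the paper: flattening via the $C^1$ chart with unit Jacobian, testing with $\eta^2(u-U)$, controlling the boundary terms (with $b(u)$ absorbed into ${\mathcal H}_0$ via the $L^\infty$-bound) through the trace embedding with constant $K_{2n/(n+1)}$, applying Lemma \ref{tecn} and Corollary \ref{surft} with $p=2$, $t=2n/(n+2)$, and tracking the Young-inequality parameters to account for the passage from $\upsilon_{\rm I}$ to $\upsilon_{\rm U}$. The only cosmetic differences are that the paper handles the $b$-term by monotonicity plus the growth bound on $b(U)$ rather than bounding $b(u)$ directly, and its exact choice of absorption parameters is $\nu_1=1/4$, $\nu_0=\nu_4=1/8$, $\nu_2=\nu_5=1$, which yields the same doubling of the terms $(a^\#)^2/a_\#$ and $1$ in $B$ that you identified.
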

\begin{proof}
Let   $U\in \mathbb{R}$, and $\eta\in W^{1,\infty}_0(\mathbb{R}^n)$ satisfy
 $0\leq \eta\leq 1$ in $\mathbb{R}^n$.
 Taking $v=\eta^2 (u-U) \in V_{2,\ell}$ as a test function in (\ref{pbu}),
 applying the H\"older inequality, and making use of (\ref{amin})-(\ref{amax}),
the monotone property of $b$, and (\ref{bmax}),  
we obtain
 \begin{align}
 {a_\#}\|\eta \nabla u\|_{2,\Omega}^2
\leq 2a^\# \|\eta^2 \nabla u\|_{2,\Omega}
\|(u-U)\nabla \eta \|_{2,\Omega}  +
 \nonumber\\+
 \|\eta{\bf f}\|_{2,\Omega}\left(\|\eta\nabla u\|_{2,\Omega}+
2\| (u-U)\nabla \eta \|_{2,\Omega}\right)
 +\|\eta f\|_{2,\Omega}\| \eta(u-U) \|_{2,\Omega}+\nonumber\\
 +\|\eta (g\chi_{\Gamma_{\rm N}}
 +\left( h+ b^\# |U|^{\ell-1}\right) \chi_{\Gamma})\|_{2,\partial\Omega}\|\eta (u-U) \|_{2,\partial\Omega}.\label{fgh}
\end{align}

By Definition \ref{cka}, 
there exist $M \in\mathbb{N}$ and $\varrho,\nu>0$ such that
for any $x_0\in\partial\Omega$ there is  $ m\in\{1,\cdots,M\}$ such that a
local coordinate system $y^{(m)}=O_m(x)$
and a local $C^{1,1}$-mapping $\varpi_m$ verify 
\begin{equation}\label{gm}
x_0\in \Gamma_m=O_m^{-1}\circ \phi_m^{-1}\left(
Q_\varrho^{(n-1)}(0)\times\{0\}\right),
\end{equation}
where 
$\phi_{m}: Q_\varrho^{(n-1)}(0)\times \mathbb{R}
\rightarrow \mathbb R^n$ of class $C^{1,1}$ is defined by
\begin{equation}
\phi_{m}(y)=
\left(
\begin{array}c
y' \\
y_n-\varpi_{m}(y')
\end{array}
\right).
\label{phi}
\end{equation}
We use the notation $y'=(y_1,\ldots, y_{n-1})\in \mathbb R^{n-1}$.
For each  $ m\in\{1,\cdots,M\}$,
we consider  the change of variables
\begin{equation}\label{chg}
z\in Q_\varrho^{(n-1)}(0)\times ]-\nu,\nu [
\mapsto y=\phi_m^{-1}(z)\mapsto x=O^{-1}(y).
\end{equation}
Since the Jacobian of  the transformation $O^{-1}_m\circ
\phi^{-1}_m$ is equal to 1, let us denote by the same letter any function
$f=f\circ O^{-1}_m\circ\phi^{-1}_m$.

Fix $ m\in\{1,\cdots,M\}$ such that $x_0\in\Gamma_m$ is
in accordance with (\ref{gm}),
set $z_0=\phi_m\circ O_m(x_0)$, and 
\[
\Sigma_R(z_0)=\{z\in  Q_\varrho^{(n-1)}(0)\times ]-\nu,\nu [:\
 |z'-z_0'|<R, \ z_n=0\}, 
\]
for any $0<R\leq \min\{\varrho,\nu\}$.
 Notice that $z_0=(z_0',0)$.

Let  $0<r<R\leq R_0=\min\{\varrho,\nu,{\rm dist}(z_0',\partial'
 Q_\varrho^{(n-1)}(0))\}$.
 We choose
 $\eta\equiv 1$ in $Q_r(z_0)$, $\eta\equiv 0$ in $\mathbb{R}^n
\setminus Q_R(z_0)$, and $|\nabla\eta|\leq 1/(R-r)$
a.e. in $Q_R(z_0)\setminus Q_r(z_0)$.

By Remark \ref{rk} and
$u\eta\in W^{1,2n/(n+1)}(Q_R^+(z_0))$ with $2n/(n+1)<2\leq n$, 
making use of the H\"older inequality, we get
\begin{align*}
\|\eta (u-U)\|_{2,\Sigma_R(z_0)}
\leq K_{{2n/( n+1)}}
|Q_R^+(z_0)|^{{1\over 2n}}\left(
\|\eta\nabla u\|_{2,Q_R^+(z_0)}+ \right.\\
\left.+
\|(u-U)\nabla\eta\|_{2,Q_R^+(z_0) } + \|\eta(u-U)\|_{2,Q_R^+(z_0)}\right),
\end{align*}

Inasmuch as $\Gamma_0=O_m^{-1}\circ \phi_m^{-1}\left(
\Sigma_{R_0}(z_0)\right)$, different cases occur, namely
 $\Gamma_0\cap\Gamma\not=\emptyset$
and  $\Gamma_0\cap\Gamma_{\rm N}\not=\emptyset$;
 $\Gamma_0\subset\Gamma$, and 
 $\Gamma_0\subset\Gamma_{\rm N}$. Throughout the sequel, we refer to
 $\|\cdot\|_{2,\Sigma_R(z_0)}$ including cases where the set is empty.

Thus, the transformed last term in (\ref{fgh})  is analyzed as follows
\begin{align*}
 \|\eta  (g\chi_{\Gamma_{\rm N}}
 + \left( h+ b^\# |U|^{\ell-1}\right) \chi_{\Gamma})\|_{2,\Sigma_R(z_0)}
\|\eta (u-U) \|_{2,\Sigma_R(z_0)}\leq \\ \leq
\nu_4a_\#\|\eta\nabla u\|_{2,Q_R^+(z_0)}^2+
{\nu_5\over (R-r)^2}\|\eta (u-U)|_{2,Q_R^+(z_0)}^2 +\\
+R(K_{{2n/( n+1)}})^2\left( {1\over 2\nu_4a_\#}+{ 2^{(n-1)/n}\over\nu_5}
 \right)
\|g\chi_{\Gamma_{\rm N}}+\left( h+ b^\# |U|^{\ell-1}\right) 
\chi_{\Gamma}\|_{2,\Sigma_R(z_0)}^2.
 \end{align*}

From above
 and reorganizing the other terms in (\ref{fgh})
as in  the proof of Proposition \ref{int1}, we have
 \begin{align*}
\|\nabla u\|_{2,Q_{r}^+(z_0)}^2
\leq
{B\over (R-r)^2} \|\eta (u-U)\|_{2,Q_R^+(z_0)}^2
+\\ +
{1\over  a_\#(1-\nu_0-\nu_1-\nu_4)}\left[
\left( {1\over 4\nu_1a_\#}+ {1\over \nu_2}\right)\|{\bf f}\|_{2,Q_R^+(z_0)}^2
+{1\over 2\nu_3}\|f\|_{2,Q_R^+(z_0)}^2+\right.\\
\left.+
R(K_{{2n/( n+1)}})^2\left( {1\over 2\nu_4a_\#}+{ 2^{(n-1)/n}\over\nu_5}
 \right)
 \|g\chi_{\Gamma_{\rm N}}+ \left( h+ b^\# |U|^{\ell-1}\right) 
\chi_{\Gamma}\|_{2,\Sigma_R(z_0)}^{2}\right],
\end{align*}
where instead of (\ref{defbb}) we use
\[
B= {4\over a_\#(1-\nu_ 0-\nu_ 1-\nu_4)}
\left( {(a^\#)^2\over \nu_0a_\#}+\nu_2+{\nu_3\over 2}+\nu_5
 \right)(P_{\sqrt{n},2n/(n+2)})^2.
 \]
Employing Lemma \ref{tecn} into the above inequality,
we apply Corollary \ref{surft}
with $\Phi=|\nabla u|$,  $t=2n/(n+2)$, $p=2$, and
\begin{align*}
\Psi=& \left({({1/(4\nu_1 a_\#)}+ 1/\nu_2)
|{\bf f}|^{2}+|f|^{2}/(2
\nu_3)\over a_\#(1-\nu_ 0-\nu_ 1-\nu_4)}
\right)^{1/2} \in L^{2+\delta}(Q_R^+(z_0));
\\
\varphi=&\left(
{{1/(2\nu_4 a_\#)}+ 2^{1-1/n}/\nu_5
\over a_\#(1-\nu_ 0-\nu_ 1-\nu_4)}\right)^{1/2}K_{{2n}/({n+1})} \times \\
& \times
|g\chi_{\Gamma_{\rm N}}+\left( h+ b^\# |U|^{\ell-1}\right) \chi_{\Gamma}|
\in L^{2+\delta}(\Sigma_R(z_0)).
\end{align*}
Upon choosing $\nu_1=1/4$, $\nu_0=\nu_4=1/8$, $\nu_2 =\nu_5=1$, 
and $Q_0=O_m^{-1}\circ \phi_m^{-1}\left(
Q_{R_0}(z_0)\right)$,
 the application of the passage to the initial coordinates system finishes
the proof of Proposition \ref{propup}.
\end{proof}

\begin{proposition}\label{neumann}
Under the conditions of Proposition \ref{propup} 
 such that the compatibility condition (\ref{cc}) is verified,
 provided that  $\Gamma_{\rm N}=\partial\Omega$,
any function $u \in V_{2}$ solving (\ref{pbun}) verifies
 (\ref{ru}) with $ {\mathcal H}$ being replaced by
$ {\mathcal G}$ which is defined by  (\ref{gan}).
\end{proposition}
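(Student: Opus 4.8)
The plan is to replay the proof of Proposition \ref{propup} in the degenerate geometry $\Gamma_{\rm N}=\partial\Omega$, in which $\Gamma=\emptyset$, so that the monotone boundary term $b(u)$ and the datum $h$ drop out and no $L^\infty$-bound on $u$ is needed. The only genuinely new ingredient is the admissibility of the localized test function. Since $f$ and $g$ satisfy the compatibility condition (\ref{cc}), the right-hand side of (\ref{pbun}) annihilates constants; the left-hand side does too (the gradient of a constant vanishes); hence, by linearity, the identity (\ref{pbun}) actually holds for every $w\in W^{1,p'}(\Omega)$, not merely for $w\in V_{p'}$. In particular $v=\eta^2(u-U)$ is admissible for any cutoff $\eta\in W^{1,\infty}_0(\mathbb{R}^n)$ with $0\le\eta\le 1$ and any constant $U\in\mathbb{R}$, since $u\in W^{1,2}$ forces $\eta^2(u-U)\in W^{1,2}=W^{1,p'}$ when $p=2$.

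With this in hand, I would insert $v=\eta^2(u-U)$ into (\ref{pbun}), apply the H\"older inequality and the coercivity (\ref{amin}) and boundedness (\ref{amax}) of $\mathsf A$, reaching the analogue of (\ref{fgh}) whose last line is only $\|\eta g\|_{2,\partial\Omega}\|\eta(u-U)\|_{2,\partial\Omega}$ because $\Gamma=\emptyset$. I would then flatten $\partial\Omega$ exactly as in Proposition \ref{propup}: using Definition \ref{cka}, pick for $x_0\in\partial\Omega$ an index $m$ with $x_0\in\Gamma_m$, the chart $O_m$, the mapping $\varpi_m$, the transformation $\phi_m$ of (\ref{phi}), and the change of variables (\ref{chg}) whose Jacobian equals $1$; set $z_0=\phi_m\circ O_m(x_0)$. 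On $Q_R^+(z_0)$ choose $\eta\equiv 1$ on $Q_r(z_0)$, $\eta\equiv 0$ outside $Q_R(z_0)$, $|\nabla\eta|\le 1/(R-r)$, estimate $\|\eta(u-U)\|_{2,\Sigma_R(z_0)}$ through the trace embedding $W^{1,2n/(n+1)}(Q_R^+(z_0))\hookrightarrow L^2(\Sigma_R(z_0))$ with the constant $K_{2n/(n+1)}$ of Remark \ref{rk} followed by H\"older, absorb the gradient term by a Young inequality with parameters $\nu_4,\nu_5$, reorganize the $\mathbf f,f$ contributions as in Proposition \ref{int1}, and apply Lemma \ref{tecn} with $U=(2R)^{-n}\int_{Q_R(z_0)}u\,\mathrm{dx}$. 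The outcome is a reverse H\"older inequality of the type (\ref{hisums}) with $p=2$, $t=2n/(n+2)$, the same constant $B$ and the same $\Psi$ as in Proposition \ref{propup}, and with $\varphi=((2\nu_4a_\#)^{-1}+2^{1-1/n}\nu_5^{-1})^{1/2}(a_\#(1-\nu_0-\nu_1-\nu_4))^{-1/2}K_{2n/(n+1)}\,|g|$.

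Finally I would invoke Corollary \ref{surft} with $\Phi=|\nabla u|$, $t=2n/(n+2)$, $p=2$, the above $\Psi$ and $\varphi$, and the same choice of constants $\nu_1=1/4$, $\nu_0=\nu_4=1/8$, $\nu_2=\nu_5=1$ made in Proposition \ref{propup}. With these values $1-\nu_0-\nu_1-\nu_4=1/2$, so $\varphi$ collapses exactly to $\mathcal G(a_\#)$ of (\ref{gan}), while $B$ reproduces the same $\upsilon_{\rm U}$ of (\ref{defuu}). Passing back to the original coordinates via $O_m^{-1}\circ\phi_m^{-1}$, with $Q_0=O_m^{-1}\circ\phi_m^{-1}(Q_{R_0}(z_0))$, yields (\ref{ru}) with $\mathcal H$ replaced by $\mathcal G$, which is the assertion. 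Conceptually there is no obstacle beyond the compatibility-based extension of the variational identity; the work is the line-by-line specialization and the verification that the constants indeed coincide with those of Proposition \ref{propup}, so that is what I would be most careful about.
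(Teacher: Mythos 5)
Your proposal is correct and follows essentially the same route as the paper: replay Proposition \ref{propup} with the radiative boundary terms absent and feed the resulting reverse H\"older inequality (with $\varphi$ proportional to $|g|$) into Corollary \ref{surft}. The only cosmetic difference is in handling admissibility of the test function: you extend the identity (\ref{pbun}) to all of $W^{1,p'}(\Omega)$ via the compatibility condition (\ref{cc}) and then test with $\eta^2(u-U)$, whereas the paper shifts the test function by the constant $d=-\int_\Omega u\eta^2\mathrm{dx}$ so that it lies in $V_2$ — the two devices are equivalent, both resting on the fact that (\ref{cc}) makes both sides of (\ref{pbun}) vanish on constants.
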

\begin{proof}
Let  $\eta\in W^{1,\infty}_0(\mathbb{R}^n)$ satisfy
 $0\leq \eta\leq 1$ in $\mathbb{R}^n$, and $d=-\int_\Omega u\eta^2 \mathrm{dx}$.
 Taking $v=u\eta^2+d\in V_{2}$ as a test function in (\ref{pbun}),
 applying the H\"older inequality, and making use of
(\ref{cc}) and (\ref{amin})-(\ref{amax}),  
we obtain
 \begin{align*}
 {a_\#}\|\eta \nabla u\|_{2,\Omega}^2\leq 2a^\# \|\eta\nabla u\|_{2,\Omega}
\| u\nabla \eta \|_{2,\Omega}
 +\|\eta f\|_{2,\Omega}\|u\eta \|_{2,\Omega} +\nonumber\\+
 \|\eta{\bf f}\|_{2,\Omega}\left(\|\eta\nabla u\|_{2,\Omega}+
2\| u\nabla \eta \|_{2,\Omega}\right)
 +\|\eta g\|_{2,\Gamma_{\rm N}}\|u\eta \|_{2,\Gamma_{\rm N}}.
\end{align*}

The argument of the proof of Proposition \ref{propup} may be reproduced,
 finishing the proof of Proposition \ref{neumann}.
\end{proof}

\section{Existence results}
\label{sce}

\subsection{Preliminary results}
\label{pre}

Making recourse of the
Poincar\'e inequality \cite[Corollary 3]{chakib}:
\[
\|v\|_{q,\Omega}\leq P_q\left(\sum_{i=1}^n
\|\partial_iv\|_{q,\Omega}+|\Gamma|^{1/q-1}
\left|\int_\Gamma v\mathrm{ds}\right|\right),
\]
 we introduce $S_{q,\ell}=S_q\max\{1+ P_q2^{(n-1)(1-1/q)},
P_q |\Gamma |^{1/q-1/\ell}\}$ and $K_{q,\ell}=K_q \max\{1+ P_q
2^{(n-1)(1-1/q)},
P_q |\Gamma |^{1/q-1/\ell}\}$ that verify
\begin{align*}
\|v\|_{nq/(n-q),\Omega}\leq S_{q,\ell}\|v\|_{1,q,\ell}
;\\  
\|v\|_{(n-1)q/(n-q),\partial\Omega}\leq K_{q,\ell}\|v\|_{1,q,\ell}.
\end{align*}

Let us recall two standard existence results that we apply later \cite{lc-ijpde,sinica}.
\begin{proposition}[meas$(\Gamma)>0$]\label{exist}
Let   ${\bf f}\in {\bf L}^{2}(\Omega)$,  $f\in L^{2}(\Omega)$, 
$g\in L^{s}(\Gamma_{\rm N})$, and $h\in L^{\ell\,'}(\Gamma)$, with $s,\ell\geq 2$.
Under the assumptions (A)-(B), there exists $u \in V_{2,\ell}$ 
being a weak solution to (\ref{omega})-(\ref{gama}), {\em i.e.}
 solving (\ref{pbu}) for all $v\in  V_{2,\ell}$. Moreover,
 the following estimate holds
\begin{align}
 {a_\#\over 2}\|\nabla u\|_{2,\Omega}^2+
{ b_\#\over\ell\,'}\| u\|_{\ell,\Gamma}^\ell\leq
{\ell-1\over\ell b_\#^{1/(\ell-1)}}\left(\|h\|_{\ell\,',\Gamma}+
\mathcal{E}(1,1)\right)^{\ell/(\ell-1)}+
\nonumber\\
+  {1\over 2a_\#}\left(
\|{\bf f}\|_{2,\Omega}+\mathcal{E}( |\Omega |^{1/n},|\Omega |^{1/2+(1/n-1)/s})
\right)^2,
\label{cotau2}
\end{align}
where $\mathcal{E}(A,B)=AS_{2n/(n+2),\ell}\|f\|_{2,\Omega}+
BK_{ns/ (n(s-1)+1 ),\ell}
\|g\|_{s,\Gamma_{\rm N}}$. 
\end{proposition}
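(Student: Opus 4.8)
The plan is to produce the solution by the classical theory of bounded, coercive, monotone operators (the Browder--Minty theorem) on the reflexive Banach space $V_{2,\ell}$, and then to read off the estimate (\ref{cotau2}) by testing the equation with the solution itself. First I would recast (\ref{pbu}) as an operator equation $\mathcal{A}u=F$ in the dual $V_{2,\ell}^*$, where $\langle\mathcal{A}u,v\rangle=\int_\Omega(\mathsf{A}\nabla u)\cdot\nabla v\,\mathrm{dx}+\int_\Gamma b(\cdot,u)v\,\mathrm{ds}$ and $\langle F,v\rangle=\int_\Omega\mathbf{f}\cdot\nabla v\,\mathrm{dx}+\int_\Omega fv\,\mathrm{dx}+\int_{\Gamma_{\rm N}}gv\,\mathrm{ds}+\int_\Gamma hv\,\mathrm{ds}$. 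To see $F\in V_{2,\ell}^*$ I would bound the $\mathbf{f}$-term by Cauchy--Schwarz, pair the $h$-term with $\|u\|_{\ell,\Gamma}$ by the very definition of the norm $\|\cdot\|_{1,2,\ell}$, and for the $f$- and $g$-terms invoke the Sobolev and trace embeddings encoded by the constants $S_{q,\ell}$ and $K_{q,\ell}$ introduced above — with $q=2n/(n+2)$, so that $nq/(n-q)=2$, and with $q=ns/(n(s-1)+1)$, so that the critical trace exponent equals $s'=s/(s-1)$ — after a Hölder step reducing the $L^q$-gradient norm to the $L^2$-gradient norm.

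Next I would verify the structural hypotheses of the Browder--Minty theorem. Boundedness and hemicontinuity of $\mathcal{A}$ follow from (A) for the volume part and from the Carath\'eodory property together with the growth bound (\ref{bmax}) for the Nemytskii-type boundary part. Monotonicity is immediate from (\ref{amin}) and the monotonicity assumed in (B):
\[
\langle\mathcal{A}u-\mathcal{A}w,u-w\rangle\geq a_\#\|\nabla(u-w)\|_{2,\Omega}^2+\int_\Gamma\big(b(\cdot,u)-b(\cdot,w)\big)(u-w)\,\mathrm{ds}\geq 0.
\]
Coercivity follows from (\ref{amin}) and (\ref{bmin}), giving $\langle\mathcal{A}u,u\rangle\geq a_\#\|\nabla u\|_{2,\Omega}^2+b_\#\|u\|_{\ell,\Gamma}^\ell$; dividing by $\|u\|_{1,2,\ell}=\|\nabla u\|_{2,\Omega}+\|u\|_{\ell,\Gamma}$ and distinguishing according to which of the two summands dominates, this tends to $+\infty$ as $\|u\|_{1,2,\ell}\to\infty$ since $\ell\geq 2$. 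Browder--Minty then yields at least one $u\in V_{2,\ell}$ solving (\ref{pbu}).

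Finally, to prove (\ref{cotau2}) I would take $v=u$ in (\ref{pbu}), apply (\ref{amin}) and (\ref{bmin}) on the left, and estimate the right-hand side: the $\mathbf{f}$-term gives $\|\mathbf{f}\|_{2,\Omega}\|\nabla u\|_{2,\Omega}$, the $h$-term gives $\|h\|_{\ell',\Gamma}\|u\|_{\ell,\Gamma}$, and the $f$- and $g$-terms, treated through the embeddings $S_{2n/(n+2),\ell}$ and $K_{ns/(n(s-1)+1),\ell}$, each split into a part controlled by $\|\nabla u\|_{2,\Omega}$ (carrying the Hölder powers $|\Omega|^{1/n}$, respectively $|\Omega|^{1/2+(1/n-1)/s}$) and a part controlled by $\|u\|_{\ell,\Gamma}$ — this is precisely why $f$ and $g$ enter both $\mathcal{E}(1,1)$ (paired with $\|u\|_{\ell,\Gamma}$) and $\mathcal{E}(|\Omega|^{1/n},|\Omega|^{1/2+(1/n-1)/s})$ (paired with $\|\nabla u\|_{2,\Omega}$). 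Collecting the coefficient of $\|\nabla u\|_{2,\Omega}$ and the coefficient of $\|u\|_{\ell,\Gamma}$ and absorbing them by Young's inequality — conjugate exponents $(2,2)$ on the gradient term and $(\ell,\ell')$ on the boundary term — delivers (\ref{cotau2}).

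I expect the only real difficulty to be bookkeeping rather than analysis: one must track the embedding constants $S_{q,\ell}$, $K_{q,\ell}$, the Hölder powers of $|\Omega|$, and the two Young splittings so that they combine into exactly the compact form (\ref{cotau2}) with the abbreviation $\mathcal{E}(A,B)$. The point to be careful about is that $f$ and $g$ necessarily contribute to both the volume and the boundary grouping, because $V_{2,\ell}$ is normed by $\|\nabla u\|_{2,\Omega}+\|u\|_{\ell,\Gamma}$ and not by an $\|u\|_{1,2}$-norm; once this is accepted, no further obstruction arises.
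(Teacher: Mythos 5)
Your proposal is correct and follows essentially the same route as the paper: the paper dismisses existence as classical (citing monotone-operator theory in its references) and devotes its proof entirely to the estimate, which it obtains exactly as you do — testing with $v=u$, applying (\ref{amin}) and (\ref{bmin}), H\"older, the embeddings with constants $S_{2n/(n+2),\ell}$ and $K_{ns/(n(s-1)+1),\ell}$ so that $f$ and $g$ feed into both the gradient and the boundary groupings, and then the two Young splittings with exponents $(2,2)$ and $(\ell,\ell')$. Your explicit Browder--Minty verification simply fills in the step the paper leaves to the literature.
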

\begin{proof}
Since the existence and uniqueness are classical, we only pay attention  on
the derivation of (\ref{cotau2}).
 Taking $v=u\in V_{2,\ell}$ as a test function in (\ref{pbu}),
 and making use of (\ref{amin}) and (\ref{bmin}),  
we obtain
 \begin{align*}
 {a_\#}\| \nabla u\|_{2,\Omega}^2+b_\#\|u\|_{\ell,\Gamma}^\ell
\leq 
 \|{\bf f}\|_{2,\Omega}\|\nabla u\|_{2,\Omega}+
\|h\|_{\ell\,',\Gamma}\|u\|_{\ell,\Gamma}+\\
 +\| f\|_{2,\Omega}S_{2n/(n+2),\ell}\left(|\Omega |^{1/n}
\|\nabla u \|_{2,\Omega}+\|u\|_{\ell,\Gamma}
\right)+\nonumber\\
 +\| g 
\|_{s,\Gamma_{\rm N}} K_{ns/( n(s-1)+1),\ell}
\left(|\Omega |^{1/2+(1/n-1)/s}
\|\nabla u \|_{2,\Omega}+\|u\|_{\ell,\Gamma}
\right),
\end{align*}
 applying the H\"older inequality, the Sobolev embedding $W^{1,2n/(n+2)}(\Omega)
\hookrightarrow L^2(\Omega)$, the trace embedding $W^{1,ns/ (n(s-1)+1) }(\Omega)
\hookrightarrow L^{s'}(\partial\Omega)$. This
completes the proof of Proposition \ref{exist}.
\end{proof}

\begin{remark}
From the definition of the Gamma function,
$\Gamma(n/2+1)=(n/2)!$ if $n$ is even, and 
$\Gamma(n/2+1)=\pi^{1/2}2^{-(n+1)/2}n(n-2)(n-4)\cdots 1$ if $n$ is odd,
the two-dimensional constant $S_1$ is simply $\pi^{-1/4}2^{-3/2}$.
\end{remark}

\begin{proposition}[meas$(\Gamma)=0$]\label{existn}
Let   ${\bf f}\in{\bf L}^{2}(\Omega)$,  $f\in L^{2}(\Omega)$, 
 $g\in L^{s}(\Gamma_{\rm N})$, with $s\geq 2$, such that the compatibility condition
(\ref{cc}) is verified.
Under the assumption (A), there exists $u \in H^1(\Omega)$ 
being the unique function such that $\int_\Omega u\mathrm{dx}=0$,
 solving (\ref{pbun}) for all $v\in  V_{2}$. Moreover,
 the following estimate holds
\begin{align}
 \|\nabla u\|_{2,\Omega}\leq {1\over 
a_\#}\left(
\|{\bf f}\|_{2,\Omega}+|\Omega |^{1/n}  S_{2n/(n+2)}
\|f\|_{2,\Omega}+\right.\nonumber\\
\left. + |\Omega |^{1/2+(1/n-1)/s} K_{ns/ (n(s-1)+1) }
\|g\|_{s,\Gamma_{\rm N}}\right). 
\label{cotaun2}
\end{align}
\end{proposition}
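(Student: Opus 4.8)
The plan is to treat Proposition \ref{existn} as a standard coercive linear Neumann problem, the radiative term of (\ref{robin}) being absent since meas$(\Gamma)=0$. First I would place myself on the Hilbert space $V_{2}=\{v\in W^{1,2}(\Omega):\int_{\Omega}v\,\mathrm{dx}=0\}$ endowed with the bilinear form $(u,v)\mapsto\int_{\Omega}\nabla u\cdot\nabla v\,\mathrm{dx}$, which is an inner product because $\|\nabla\cdot\|_{2,\Omega}$ is a norm on $V_{2}$ by the Poincar\'e--Wirtinger inequality. On this space the form $\mathcal{A}(u,v)=\int_{\Omega}(\mathsf{A}\nabla u)\cdot\nabla v\,\mathrm{dx}$ is continuous by (\ref{amax}) and coercive by (\ref{amin}), and the right-hand side of (\ref{pbun}) is a bounded linear functional $\mathcal{L}(v)=\int_{\Omega}{\bf f}\cdot\nabla v\,\mathrm{dx}+\int_{\Omega}fv\,\mathrm{dx}+\int_{\Gamma_{\rm N}}gv\,\mathrm{ds}$ thanks to the integrability of the data together with the Sobolev and trace embeddings; the Lax--Milgram theorem then yields a unique $u\in V_{2}$, so that $u\in H^{1}(\Omega)$ with $\int_{\Omega}u\,\mathrm{dx}=0$. (This part is classical and can equally be cited from \cite{lc-ijpde,sinica}.) I would note in passing that the compatibility condition (\ref{cc}) is exactly what makes $\mathcal{L}$ insensitive to additive constants, which is both what reconciles the variational problem with the divergence-form PDE (\ref{omega}), (\ref{gama}) tested against all of $W^{1,2}(\Omega)$ and what makes the normalisation $\int_{\Omega}u\,\mathrm{dx}=0$ the natural one.

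The real content is the explicit bound (\ref{cotaun2}). I would obtain it by taking $v=u$ in (\ref{pbun}) and using (\ref{amin}) to get $a_\#\|\nabla u\|_{2,\Omega}^{2}\leq\mathcal{A}(u,u)=\mathcal{L}(u)$, after which it suffices to dominate $\mathcal{L}(u)$ by (a constant times) $\|\nabla u\|_{2,\Omega}$ and divide (the case $\nabla u=0$ being trivial). The term $\int_{\Omega}{\bf f}\cdot\nabla u$ is $\leq\|{\bf f}\|_{2,\Omega}\|\nabla u\|_{2,\Omega}$ by H\"older. For $\int_{\Omega}fu$ I would use the Sobolev embedding of Remark \ref{rsob} with the exponent $q=2n/(n+2)$, chosen precisely so that its critical exponent is $q^{*}=2$, giving $\|u\|_{2,\Omega}\leq S_{2n/(n+2)}\|\nabla u\|_{q,\Omega}$, and then H\"older on $\Omega$ to pass from $L^{q}$ to $L^{2}$ gradients at the price of $|\Omega|^{1/q-1/2}=|\Omega|^{1/n}$. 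For the boundary term $\int_{\Gamma_{\rm N}}gu$ I would proceed symmetrically: H\"older on $\Gamma_{\rm N}$, then the trace embedding of Remark \ref{rk} with $q=ns/(n(s-1)+1)$, chosen so that its critical trace exponent equals $s'=s/(s-1)$, giving $\|u\|_{s',\partial\Omega}\leq K_{ns/(n(s-1)+1)}\|\nabla u\|_{q,\Omega}$, and a last H\"older on $\Omega$ producing the factor $|\Omega|^{1/q-1/2}=|\Omega|^{1/2+(1/n-1)/s}$. Adding the three contributions reproduces exactly the right-hand side of (\ref{cotaun2}).

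I do not expect a genuine obstacle: the statement is classical and the proof is the argument of Proposition \ref{exist} with the radiative boundary integrals deleted. The only point requiring attention is the bookkeeping of Lebesgue exponents, namely choosing in each embedding the intermediate exponent $q$ whose critical (respectively critical trace) exponent matches the dual of $L^{2}(\Omega)$ (respectively of $L^{s}(\Gamma_{\rm N})$), checking that $1<q<n$ holds for every admissible $n\geq 2$ and $s\geq 2$ so that Remarks \ref{rsob} and \ref{rk} apply --- with the understanding that for $n=2$ one has $2n/(n+2)=1$ and $S_{2n/(n+2)}$ must be read as the limit constant $S_1$ of Remark \ref{rsob} --- and finally evaluating $1/q-1/2$ in each case to recover the stated powers of $|\Omega|$.
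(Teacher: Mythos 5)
Your proposal is correct and follows exactly the route the paper itself takes: existence and uniqueness via Lax--Milgram (which the paper relegates to the references and to the ``classical'' remark), and then the estimate by testing with $v=u$, using (\ref{amin}), and invoking the Sobolev and trace embeddings of Remarks \ref{rsob} and \ref{rk} at the intermediate exponents $q=2n/(n+2)$ and $q=ns/(n(s-1)+1)$ followed by H\"older on $\Omega$ to produce the $|\Omega|^{1/n}$ and $|\Omega|^{1/2+(1/n-1)/s}$ factors. This is precisely the argument of the proof of Proposition \ref{exist} with the radiative boundary terms deleted, so the two approaches coincide.
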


\subsection{Proof of Theorem \ref{main1}}
\label{scm1}

Supposing that the conditions of Proposition \ref{exist} are fulfilled,
 there exists $u \in V_{2,\ell}$ 
being a weak solution to (\ref{omega})-(\ref{gama}), {\em i.e.}
 solving (\ref{pbu}) for all $v\in  V_{2,\ell}$.

On the one hand, Proposition \ref{int1} ensures that
for  each point $x\in\Omega$
it is associated a sequence of cubic intervals $Q_{r(x)/2}(x)$,
with side lengths $r(x)>0$  tending to zero, such that
(\ref{ri}) is verified.
On the other hand, Proposition \ref{propup} ensures that
for  each point $x\in\partial\Omega$
it is associated a sequence of cubic intervals $Q_{r(x)/2}(x)$,
with side lengths  $r(x)>0$  tending to zero, such that
(\ref{ru}) is verified.

 Let us denote the collection of the above balls by $\mathcal B$, {\em i.e.}
 \[\mathcal{B}=\{ Q_{r_k/2}(x)\}_{x\in\overline\Omega,\ k\geq 1}.
\]
 All
 radii of balls in  $\mathcal B$ are totally bounded by $(8S_{2n/(n+2)})^{-1}$.
According to the Besicovitch covering theorem
 \cite[Theorem 1.2]{guz}, there exists a sequence of cubic intervals
$\{Q_{r_m/2}(x^{(m)})\}_{m\geq 1}$ in  $\mathcal B$ such that: $\overline{\Omega}\subset \cup_{m\geq 1}Q_{r_m/2}(x^{(m)})$; and
every point of $\mathbb{R}^n$ belongs to at most $2^n+1$ balls in 
$\{Q_{r_m/2}(x^{(m)})\}_{m\geq 1}$.

Since $\Omega$
 is bounded, its closure $\overline\Omega$
is compact. Hence it can be covered with finitely many cubic intervals
$Q_{r_m/2}(x^{(m)})$, $m=1,\cdots, M$. Let us define 
\begin{equation}\label{rstm}
r_\#=\min\{r_m:\ m=1,\cdots,M\}.\end{equation}

Setting $\mathcal{C}=\{Q_{r_m}(x^{(m)})\}_{m=1,\cdots, M}$, we build
the collection $\mathcal{C}_1$ as being the union of $Q^{(1)}_1=Q_{r_1}(x^{(1)})$
with all pairwise disjoint cubes $Q^{(1)}_m=Q_{r_m}(x^{(m)})\in \mathcal{C}$ 
 such that $Q^{(1)}_1\cap Q^{(1)}_m=\emptyset$.
 This selection process may be recursively repeated, by building
 $\mathcal{C}_k$ as being the union of $Q^{(k)}_1\in \mathcal{C}\setminus
 \mathcal{C}_{k-1}$
with all pairwise disjoint cubes $Q^{(k)}_m\in \mathcal{C}\setminus
 \mathcal{C}_{k-1}$ 
 such that $Q^{(k)}_1\cap Q^{(k)}_m=\emptyset$.
 Consequently,  there exists a number $N$, depending on the dimension of the space,
such that for each $k\in\{1,\cdots, N\}$, we collect pairwise disjoint cubes
corresponding to half ones from $\mathcal{B}$. 

For each $k\in\{1,\cdots, N\}$, we split the set of indices as
$\mathcal{I}(k)\cup \mathcal{J}(k)$,
where $\mathcal{I}(k)$ contains the indices with $x^{(i)}\in\Omega$,
while $\mathcal{J}(k)$ contains the indices with $x^{(j)}\in\partial\Omega$.
Hence, combining (\ref{ri}) and (\ref{ru}) with
\[
\|\nabla u\|_{p,\Omega}^p\leq \sum_{k=1}^N\left(\sum_{i\in\mathcal{I}(k)}
\|\nabla u\|_{p,Q_{r_i/2}(x^{(i)})}^p+\sum_{j\in\mathcal{J}(k)}
\|\nabla u\|_{p,Q_{r_j/2}(x^{(j)})\cap\Omega} ^p\right),
\]
 we find (\ref{cotam1}).

\subsection{Proof of Theorem \ref{main1n}}

Applying Propositions \ref{neumann} and \ref{existn}
instead of Propositions \ref{propup} and \ref{exist}, respectively,
this proof is {\em mutatis mutandis} the proof of Theorem \ref{main1}.

\section{Proof of Theorem \ref{main2}}
\label{secm2}

We consider the operator $\mathcal{T}:V_{p,\ell}\rightarrow V_{p,\ell}$,
for any $p\in [2,2+1/(\upsilon-1)[$ and $\ell\geq 2$,
defined by
\[
\theta\mapsto \phi=\phi(\theta)\mapsto \Theta,
\]
where $\phi\in W^{1,p}(\Omega)$ is the unique solution,
verifying $\int_\Omega \phi\mathrm{dx}=0$, to the auxiliary
electric problem
\begin{equation}\label{auxe}
\int_\Omega \sigma(\theta)\nabla\phi\cdot\nabla w\mathrm{dx}=-
\int_\Omega\alpha_{\rm s}(\theta) \sigma(\theta)\nabla\theta\cdot\nabla w\mathrm{dx}+
\int_{\Gamma_{\rm N}} gw\mathrm{ds},\quad\forall w\in V_{p'},
\end{equation}
and $\Theta\in V_{p,\ell}$  is the unique solution to the auxiliary
thermal problem
\begin{align}\label{auxt}
\int_\Omega k_{}(\theta)\nabla\Theta\cdot\nabla v\mathrm{dx}+
\int_{\Gamma}f_\lambda(\theta)|\Theta| ^{\ell-2}\Theta v\mathrm{ds}=
\int_{\Gamma}\gamma(\theta)\theta_{\rm e}^{\ell-1} v\mathrm{ds}\nonumber\\
-
\int_\Omega \sigma(\theta)\left(\alpha_{\rm s}(\theta)(\theta+\phi)
\nabla\theta+\phi\nabla\phi\right)\cdot\nabla v\mathrm{dx},
\quad\forall v\in V_{p',\ell}.
\end{align}

The existence and  uniqueness of $\phi$ and $\Theta$ are ensured by
\begin{itemize}
\item Theorem \ref{main1n}, under $\mathsf{A}=\sigma(\theta)$,
${\bf f}=\alpha_{\rm s}(\theta) \sigma(\theta)\nabla\theta
\in {\bf L}^{p}(\Omega)$, and $g\in L^p(\Gamma_{\rm N}).$
\item Theorem \ref{main1}, under $\mathsf{A}=k(\theta)$,
${\bf f}=\alpha_{\rm s}(\theta)(\theta+\phi) \sigma(\theta)
\nabla\theta+\phi\sigma(\theta) \nabla\phi
\in {\bf L}^{p}(\Omega)$, and $h=
\gamma(\theta)\theta_{\rm e}^{\ell-1}\in L^p(\Gamma).$
The uniqueness of $\Theta$ is true under the strict monotone property 
according to Remark \ref{rmo}.
\end{itemize}

Next, let us prove that $\mathcal{T}$ maps the closed ball
$\overline B_R(0)$ into itself.
Let $\theta\in\overline B_R(0)$, {\em i.e.} $\theta\in V_{p,\ell}$ satisfies
$\|\nabla\theta\|_{p,\Omega}+\|\theta\|_{\ell,\Gamma}\leq R$.
Consequently, we have
\[
\|\theta\|_{\infty,\Omega}\leq C_\infty R.
\]

 Using the  estimates (\ref{cotaun2}) and (\ref{cotam1})
in accordance with Theorem \ref{main1n},
and taking $R\geq \|g\|_{p,\Gamma_{\rm N}}$, we deduce
 \begin{align}
 \|\nabla\phi\|_{2,\Omega}\leq {1\over \sigma_\#}\left(\sigma^\#\alpha^\#
 \|\nabla\theta\|_{2,\Omega}+|\Omega|^{1/(2p')} K_{2p/(2p-1)}
\|g\|_{p,\Gamma_{\rm N}}\right);\nonumber \\
 \|\nabla\phi\|_{p,\Omega}\leq \mathcal{M}_1 \|g\|_{p,\Gamma_{\rm N}}
+\mathcal{M}_2 \|\nabla\theta\|_{p,\Omega} \leq
(\mathcal{M}_1+\mathcal{M}_2)R,\label{cotapp}
 \end{align}
 with
 \begin{align*}
  \mathcal{M}_1&=
2^{3/2}  5^{1/p} \frac {|\Omega|^{1/(2p')}K_{2p/(2p-1)}
(r_\#)^{2/p-1}+[1+\upsilon (p-1)]^{1/p}\sqrt{1+\sigma_\#}}
{ (p-1-\upsilon(p-2))^{1/p}\sigma_\#} ; \\
\mathcal{M}_2&=
2^{3/2}   5^{1/p} \sigma^\#\alpha^\#  
\frac{|\Omega|^{{1\over 2}-{1\over p}}(r_\#)^{{2\over p}-1}
+[2^{(n+1)(p-2)/2}+\upsilon (p-1)]^{1/p}\sqrt{1+\sigma_\#}}
 {(p-1-\upsilon(p-2))^{1/p} \sigma_\#}, 
  \end{align*}
  considering that $K_{4/3}=1/\pi$, and $2<p<3$.
  
 Using the  estimates (\ref{cotau2}) and (\ref{cotam1}), we deduce
 \begin{align*}
{k_\#\over 2} \|\nabla\Theta\|_{2,\Omega}^2+
{b_\#\over \ell\,'}\|\Theta\|_{\ell,\Gamma}^\ell
\leq {(\gamma^\#)^{\ell\,'}\over \ell\,' (b_\#)^{1/(\ell-1)}}
\|\theta_\mathrm{e}\|_{\ell,\Gamma}^{\ell}+\\
+{(\sigma^\#)^2 |\Omega|^{1-2/p}\over 2k_\#}\left(\alpha^\#(
\|\theta\|_{\infty,\Omega} +\|\phi\|_{\infty,\Omega} )\|\nabla\theta\|_{p,\Omega}
+\|\phi\|_{\infty,\Omega}  \|\nabla\phi\|_{p,\Omega}\right)^2;\\
 \|\nabla\Theta\|_{p,\Omega}\leq \mathcal{M}_3
\|\theta_\mathrm{e}\|_{\ell,\Gamma}^{\ell/2}
 +\mathcal{M}_4 \left( \gamma^\#
\|\theta_\mathrm{e}\|_{(\ell-1)p,\Gamma}^{\ell-1}
+b^\#[{\rm ess} \sup_\Omega |\Theta | ]^{\ell-1} |\Gamma|^{1/p}
\right) +\nonumber\\
 +\mathcal{M}_5 \sigma^\#\left(\alpha^\#(
\|\theta\|_{\infty,\Omega} +\|\phi\|_{\infty,\Omega} )\|\nabla\theta\|_{p,\Omega}
+\|\phi\|_{\infty,\Omega}  \|\nabla\phi\|_{p,\Omega}\right),
 \end{align*}
 with
 \begin{align*}
  \mathcal{M}_3&= 2 \times  5^{1/p} 
\frac{ 2^{2-3/p}(r_\#)^{2/p-1}(\gamma^\#)^{\ell\,'/2}(b_\#)^{-1/[2(\ell-1)]} }
{ (p-1-\upsilon(p-2))^{1/p} \sqrt{k_\#}}; \\ 
  \mathcal{M}_4&= 2\times
   5^{1/p}  \frac {[1+\upsilon (p-1)]^{1/p}\sqrt{2/k_\#+1} }
{ (p-1-\upsilon(p-2))^{1/p}  \sqrt{k_\#}};  \\ 
\mathcal{M}_5&= 2^{3/2}    5^{1/p} \frac{ (r_\#)^{2/p-1}
|\Omega|^{{1\over 2}-{1\over p}}
+[2^{(n+1)(p-2)/2}+\upsilon (p-1)]^{1/p}\sqrt{1+k_\#}}
{(p-1-\upsilon(p-2))^{1/p}  k_\#}.
  \end{align*}
  
By the other hand,  (\ref{supess}) implies that
\[ 
{\rm  ess } \sup_{\Omega}|\Theta|\leq 1
+
\mathcal{Z}_1(k_\#,b_\#) \sigma^\#C_\infty  \mathcal{M}_6 R^2+
\mathcal{Z}_2(k_\#,b_\#)\gamma^\#
\|\theta_\mathrm{e}\|_{(\ell-1)p,\Gamma}^{\ell-1} ,
\]
with $ \mathcal{M}_6=
 \alpha^\#(1+\mathcal{M}_1+\mathcal{M}_2)
+(\mathcal{M}_1+\mathcal{M}_2)^2.$

 Hence, we conclude
 \begin{align}
  \|\nabla\Theta\|_{p,\Omega}+\|\Theta\|_{\ell,\Gamma}\leq 
 \mathcal{M}_3
\|\theta_\mathrm{e}\|_{\ell,\Gamma}^{\ell/2}
 +\mathcal{M}_4  \gamma^\#
\|\theta_\mathrm{e}\|_{(\ell-1)p,\Gamma}^{\ell-1}
 + \left({ \gamma^\#\over b_\#}\right)^{1/(\ell-1)}\|\theta_\mathrm{e}
\|_{\ell,\Gamma} + \nonumber\\
+2^{\ell-2}b^\#\mathcal{M}_4  |\Gamma|^{1/p}
\left(1+\mathcal{Z}_2(k_\#,b_\#)\gamma^\#
\|\theta_\mathrm{e}\|_{(\ell-1)p,\Gamma}^{\ell-1} 
\right)^{\ell-1}+ \nonumber\\
 +2^{\ell-2}b^\#\mathcal{M}_4  |\Gamma|^{1/p}
\left( 
\mathcal{Z}_1(k_\#,b_\#) \sigma^\#C_\infty  \mathcal{M}_6 \right)^{\ell-1}
R^{2(\ell-1)} +\nonumber \\ 
+\left({\ell\,' |\Omega|^{1-{2\over p}}\over 2b_\# k_\#}\right)^{1/\ell}
(\sigma^\#C_\infty)^{2/\ell}\mathcal{M}_6^{2/\ell}R^{4/\ell}
+C_\infty\sigma^\# \mathcal{M}_5 \mathcal{M}_6 R^2 = \mathcal{Q}(R), 
\qquad \label{defQ}
 \end{align}
taking $\ell\geq 2$ into account.
 Considering that $\mathcal{Q}(0)>0$, the smallness condition $\mathcal{Q}(1)<1$
 assures that the continuous function
$\mathcal{Q}-I$, with $I$ denoting the identity function, has one positive real root
 $R< 1$, by the application of the intermediate theorem, {\em i.e.} $\mathcal{Q}(R)=R$.

Our aim is
 to find a fixed point $\theta=\mathcal{T}(\theta)$, by applying the following
Tychonoff extension to weak topologies of the Schauder fixed point
theorem \cite[pp. 453-456 and 470]{dsch}:
\begin{theorem}\label{fpt} 
Let $K$ be a nonempty   compact convex subset of a
locally convex  space $X$. Let
${\mathcal T}:K\rightarrow K$ be a continuous 
operator. Then $\mathcal T$ has at least one fixed point.
\end{theorem}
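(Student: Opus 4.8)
The plan is to prove Theorem \ref{fpt} by the classical route: approximate $K$ from inside by finite‑dimensional polytopes, solve the corresponding finite‑dimensional problems with Brouwer's fixed point theorem, and pass to the limit along a net using the compactness of $K$. Since $X$ is locally convex, fix a base $\mathcal{V}$ of neighborhoods of the origin consisting of convex, balanced (hence symmetric) open sets, and direct $\mathcal{V}$ by reverse inclusion.

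First, for each $V\in\mathcal{V}$ I would build a \emph{Schauder projection} $\pi_V$. By compactness of $K$ choose $a_1,\dots,a_m\in K$ with $K\subset\bigcup_{i=1}^m(a_i+V)$; letting $p_V$ be the Minkowski gauge of $V$ (so that $V=\{p_V<1\}$), set $\lambda_i(x)=\max\{0,\,1-p_V(x-a_i)\}$ and $\psi_i=\lambda_i/\sum_j\lambda_j$, the denominator being strictly positive and continuous on $K$ because the sets $a_i+V$ cover $K$. Define $\pi_V\colon K\to P_V:=\mathrm{conv}\{a_1,\dots,a_m\}$ by $\pi_V(x)=\sum_i\psi_i(x)a_i$. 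Two facts are then needed: $P_V\subset K$ because $K$ is convex, and $\pi_V(x)-x=\sum_i\psi_i(x)(a_i-x)\in V$ for every $x\in K$, since $\psi_i(x)>0$ forces $x-a_i\in V$, hence $a_i-x\in V$ by symmetry, and $V$ is convex with $\sum_i\psi_i(x)=1$.

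Next, $P_V$ is a compact convex subset of a finite‑dimensional subspace, and $\pi_V\circ\mathcal{T}\colon P_V\to P_V$ is continuous (recall $\mathcal{T}$ maps $K$ into $K\supset P_V$); Brouwer's fixed point theorem yields $x_V\in P_V$ with $\pi_V(\mathcal{T}(x_V))=x_V$. Applying the near‑identity property at the point $\mathcal{T}(x_V)\in K$ gives $\mathcal{T}(x_V)-x_V=\mathcal{T}(x_V)-\pi_V(\mathcal{T}(x_V))\in V$. The net $(x_V)_{V\in\mathcal{V}}$ lies in the compact set $K$, so some subnet converges to a point $x^*\in K$. To conclude, given an arbitrary $W\in\mathcal{V}$ pick $U\in\mathcal{V}$ with $U+U+U\subset W$; continuity of $\mathcal{T}$ at $x^*$ together with convergence of the subnet gives, eventually along the subnet, $\mathcal{T}(x^*)-\mathcal{T}(x_V)\in U$ and $x_V-x^*\in U$, while $\mathcal{T}(x_V)-x_V\in V\subset U$ holds eventually because the subnet refines every member of $\mathcal{V}$. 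Adding the three inclusions shows $\mathcal{T}(x^*)-x^*\in W$; since $W\in\mathcal{V}$ is arbitrary, $\mathcal{T}(x^*)-x^*$ lies in the intersection of all neighborhoods of the origin, namely $\{0\}$, so $\mathcal{T}(x^*)=x^*$.

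I expect the main obstacle to be twofold. The first is verifying carefully that the Minkowski‑gauge construction yields a genuinely \emph{continuous} partition of unity with the precise containment $\pi_V(x)-x\in V$: this is what forces the use of balanced neighborhoods and of the identity $V=\{p_V<1\}$ for open convex $V$, and it is the step where the finite‑dimensional approximation is actually controlled. The second, and more characteristic of the Tychonoff enhancement over the Banach‑space Schauder theorem, is the limiting argument, which must be carried out with nets (and subnets that eventually refine any $U\in\mathcal{V}$) rather than with sequences, so the directed‑set bookkeeping has to be done with some care.
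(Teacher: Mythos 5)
Your argument is correct, and it is essentially the classical proof of the Schauder--Tychonoff theorem. Note, however, that the paper does not prove this statement at all: it is quoted verbatim from Dunford--Schwartz (the cited pages 453--456 and 470), so you are supplying a proof where the paper only gives a reference. Your route --- Schauder projections $\pi_V$ built from a Minkowski-gauge partition of unity subordinate to a finite cover $K\subset\bigcup_i(a_i+V)$, Brouwer's theorem on the polytopes $P_V=\mathrm{conv}\{a_1,\dots,a_m\}\subset K$, and a compactness/net argument to extract the fixed point --- is the standard finite-dimensional approximation proof, and all the delicate points are handled correctly: the identity $V=\{p_V<1\}$ for an open convex balanced $V$ gives both the strict positivity of the denominator $\sum_j\lambda_j$ on $K$ and the containment $\pi_V(x)-x\in V$, and the subnet bookkeeping (eventual refinement of any $U\in\mathcal{V}$ together with the $U+U+U\subset W$ splitting) is done properly.

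The only point worth making explicit is the very last step: the conclusion that $\bigcap_{W\in\mathcal{V}}W=\{0\}$ requires the locally convex space to be Hausdorff. This is implicit in the conventions of Dunford--Schwartz (and in the application in the paper, where $X$ is a reflexive Banach space with its weak topology, hence Hausdorff), but since the theorem as stated says only ``locally convex space,'' you should either add the Hausdorff hypothesis or observe that without it one only obtains $\mathcal{T}(x^*)-x^*$ in the closure of $\{0\}$. This is a matter of convention rather than a gap in the argument.
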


Set the reflexive Banach space
 $X=V_{p,\ell}$ $(p>n=2)$ endowed with the weak topology, and $K=\overline B_R(0)$.
 It remains to prove that the operator $\mathcal{T}$
is continuous (for the weak topologies).
Let  $\{\theta_m\}_{m\in\mathbb{N}}$ be a sequence in $ {K}$ such that
  $\theta_m\rightharpoonup\theta$ in $W^{1,p}(\Omega)$,
  and $\Theta_m$ and $\phi_m$ be  the corresponding solutions 
 of (\ref{auxe}) and (\ref{auxt}), respectively.
 Observe that $\theta\in K$ because $K$ is convex and  closed,
hence it is weakly closed in  $W^{1,p}(\Omega)$.
From (\ref{cotapp}) and  $\theta_m,\Theta_m\in K$, there exists $ (\Theta,\phi)
 \in K\times V_{p}$ being
 a weak cluster point of  $\{  (\Theta_m,\phi_m)\}$.  
Let then  $\{  (\Theta_m,\phi_m)\}$ be a
non-relabeled subsequence such that 
  $  (\Theta_m,\phi_m)\rightharpoonup  (\Theta,\phi)$ in $[ W^{1,p}(\Omega)]^{2}$.

By appealing to the  compactness   embedding
  $W^{1,p}(\Omega)\hookrightarrow\hookrightarrow C(\bar\Omega)$ for $p>n=2$,
   we have that 
  $\theta_m\rightarrow\theta$ in $ L^\infty(\Omega)$, and also
pointwisely in $\Omega$.
  Applying the   Krasnoselski Theorem to the Nemytskii operators
$\sigma$ and $\alpha_{\rm s}$, we get
\[
  \sigma(\theta_m)\nabla w\rightarrow\sigma (\theta)\nabla w,\quad
(\alpha_{\rm s}\sigma)(\theta_m)\nabla w\rightarrow
(\alpha_{\rm s} \sigma) (\theta)\nabla w\quad
   \mathrm{in} \quad\mathbf{L} ^{p'}(\Omega),
\] 
making use of the Lebesgue's dominated convergence theorem, 
with (\ref{amm}) and (\ref{smm}).
 
  Then, the variational equality
 (\ref{auxe}) for the solutions  $\phi_m$ 
passes to the limit  as $m$ tends to infinity,
 concluding  (\ref{auxe}).

Similar  strong convergence holds for the leading coefficient $k$.
The  compactness embedding also implies that 
  $\phi_m\rightarrow\phi$ in $ L^\infty(\Omega)$, and  
$\theta_m\rightarrow\theta$ and 
$\Theta_m\rightarrow\Theta$ in $ L^\infty(\partial\Omega)$.
In particular, $\theta_m$
pointwisely converges to $\theta$ on $\Gamma$.
Thus, we can proceed as above, with the aid of (\ref{fmm})-(\ref{gmm}),
by applying the  Lebesgue dominated convergence theorem, obtaining 
 $f_\lambda(\theta_m)|\Theta_m|^{\ell-2}\Theta_mv
 \rightarrow f_\lambda(\theta)|\Theta|^{\ell-2}\Theta v$ and
 $\gamma(\theta_m)v \rightarrow \gamma(\theta) v$ in $L^1(\Gamma)$ and
in $L^{p'}(\Gamma)$, respectively.

Then,  the variational equality
 (\ref{auxt}) for the solutions  $ \Theta_m$
 passes to the limit  as $m$ tends to infinity,
 concluding  (\ref{auxt}).
 
  Therefore, Theorem \ref{fpt} ensures the existence of
at least one fixed point $\theta=\mathcal{T}(\theta)$
 concluding the proof of Theorem \ref{main2}.
 
\section*{Appendix}

Let us prove that $\sigma_s\geq 0$, with $\sigma_s$ representing
the entropy production which verifies
\[ 
\rho\partial_t s=-\nabla\cdot{\bf J}_s+\sigma_s,
\]
where  $\rho$ denotes the density, 
 $s$ denotes the  specific entropy, and ${\bf J}_s$ denotes   the entropy flux.

In the absence of external forces, 
the conservation laws of  energy  and electric charge are, respectively,
\begin{align}\label{dte}
\rho{\partial_t e}=-\nabla\cdot{\bf J};\\
\rho {\partial_t q}=-\nabla\cdot{\bf j}.\label{dtq}
\end{align}
 Here $e$ denotes the specific internal energy,
and  $q$ is the specific electric charge.
 We multiply (\ref{dte}) by $1/\theta$ and (\ref{dtq}) by $\phi/\theta$,
 with 
 $\theta$ denoting the absolute temperature,
and $\phi$ representing the electric potential.
 Gathering the obtained equations with the local form of the Gibbs equation
\cite{liu}:
\[
{\rm d} e=\theta {\rm d}s+\phi{\rm d}q,
\]
we deduce
\[
\left\{\begin{array}{l}
{\bf J}_s=\left({\bf J}-\phi{\bf j}\right)/\theta 
\quad (={\bf q}/\theta)\\
\sigma_s={\bf J}\cdot\nabla({1/ \theta})-
{\bf j}\cdot\nabla (\phi/\theta)
\end{array}\right.
\]
Substituting (\ref{pheno1})-(\ref{pheno2}) into the above expression of
$\sigma_s$ we find
\[
\sigma_s={(\nabla\theta)^\top   k\nabla\theta\over \theta^2}+
{(\alpha_{\rm s}\nabla\theta+\nabla\phi)^\top
\sigma(\alpha_{\rm s}\nabla\theta+\nabla\phi)\over\theta},
\]
if provided by a symmetric matrix $\sigma$, and $k=k_T+\Pi\alpha_{\rm s}\sigma$.
Therefore, we conclude  $\sigma_s\geq 0$, under positive 
semidefinite matrices $k$ and $\sigma$.

\end{document}